\documentclass{amsart}[12pt]
\usepackage[pdftex]{graphicx}
\usepackage{color}
\usepackage[colorlinks=true,%
    linkcolor=blue,%
    citecolor=blue,%
    filecolor=blue,%
    menucolor=blue,%
    urlcolor=blue]{hyperref}
\hypersetup{pdftitle={Entropy in Dimension One}}
\hypersetup{pdfauthor={William P. Thurston}}
\graphicspath{{draws/}{}}

\newcommand{\R}{{\mathbb R}}
\newcommand{\E}{{\mathbb E}}
\newcommand{\Z}{{\mathbb Z}}
\newcommand{\C}{{\mathbb C}}

\newcommand{\Q}{{\mathbb Q}}
\newcommand{\proj}{{\mathbb P}}
\newcommand{\abs}[1]{{\left| #1 \right|}}

\DeclareMathOperator{\PL}{PL}
\DeclareMathOperator{\GL}{GL}

\DeclareMathOperator{\Split}{Split}

\DeclareMathOperator{\Var}{Var}

\theoremstyle{plain}
\newtheorem{theorem}{Theorem}[section]

\newtheorem{lemma}[theorem]{Lemma}

\newtheorem{proposition}[theorem]{Proposition}

\newtheorem{question}[theorem]{Question}

\theoremstyle{definition}
\newtheorem{defn}[theorem]{Definition}

\theoremstyle{remark}
\newtheorem{remark}[theorem]{Remark}

\begin{document}
\title{Entropy in Dimension One}

\author{William P. Thurston}
\date{January 28, 2014}
\maketitle

\section{Introduction}

\begin{figure}[htbp]
\centering
\includegraphics[width=4.5in]{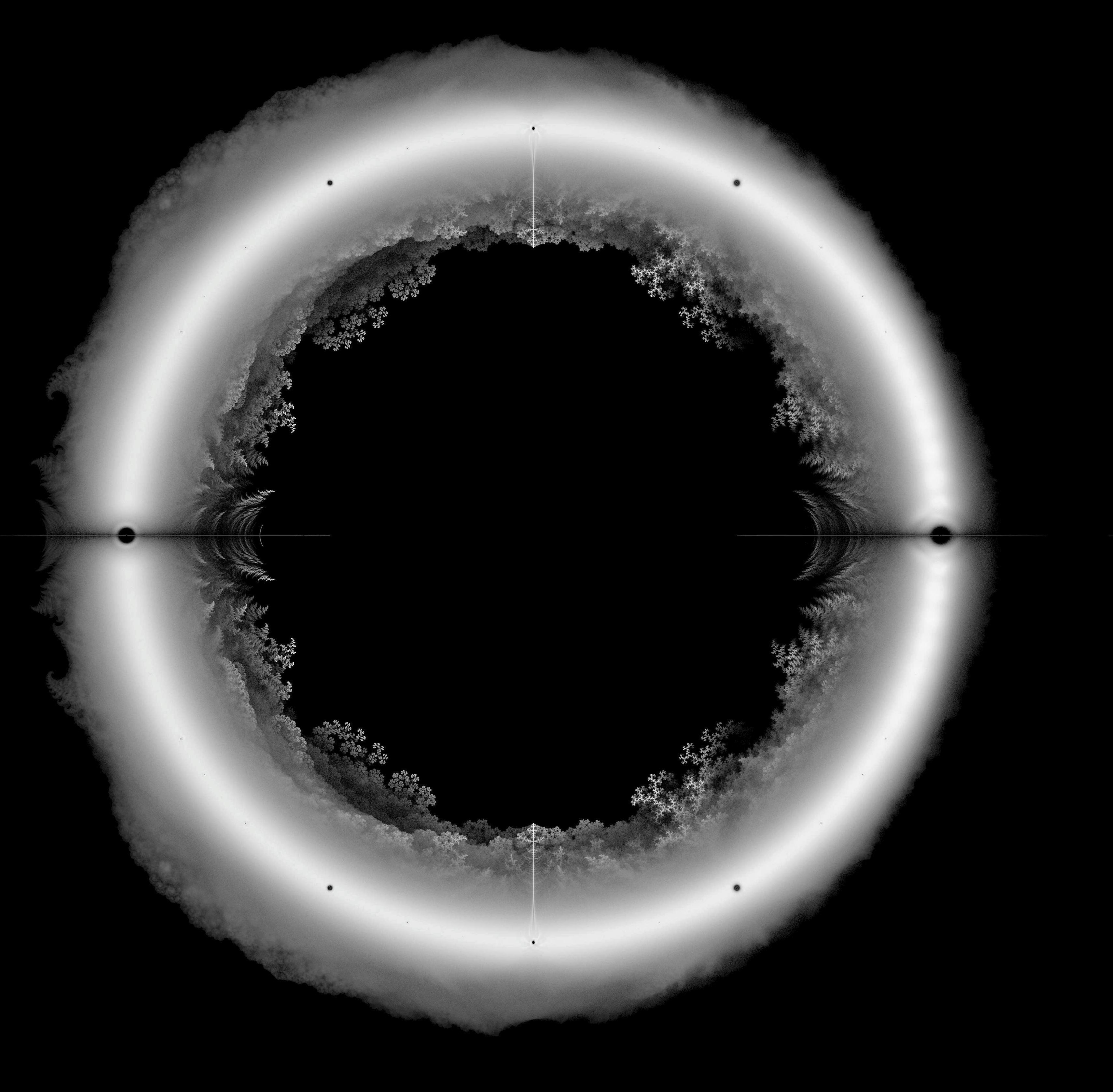}
\caption{ This is a plot of roughly $8*10^8$  roots of defining polynomials
for $\exp(h(f))$, where $f$ ranges over
a sample of about $10^7$ postcritically finite
quadratic maps of the interval with postcritical orbit of length $\le 80$.  The brightness is proportional to the log of the density; the highest concentration is at the unit circle. 
}
\label{fig:KneadingRoots}
\end{figure}

The topological entropy $h(f)$ of a map from a compact topological space to itself, $f: X \rightarrow X$, is a numerical measure of the unpredictability of
trajectories $x, f(x), f^2(x), \dots $ of points under $f$: it is the
limiting upper bound for exponential growth rate of $\epsilon$-distinguishable
orbits, as $\epsilon \rightarrow 0$. Here $\epsilon$ can be measured with respect to an arbitrary metric on $X$, or one
can merely think of it as a neighborhood of the diagonal in $X \times X$, since all that matters is whether or not two points are within $\epsilon$ of each other.
The number of $\epsilon$-distinguishable orbits of length $n$ is the maximum cardinality of a set of orbits of $f$ such that no two are always within $\epsilon$.

 More
formally, given a metric $d$ on $X$, and a continuous map $f:X\to X$,  define the $\epsilon$-count  of  $X$, $N(X,d,\epsilon)$ to be the maximum cardinality of a set $S \subset X$ such that no point is within $\epsilon$ of any other. Define a metric $d_{f,n}$ on $X$
as $d_{f,n}(x, y) = \sup_{0 \le i < n} \left \{ d(f^i(x), f^i(y)) \right \}$.  Then



$$
h(f) = \lim_{\epsilon \rightarrow 0} \limsup_{n \rightarrow \infty}\frac{1}{n}\log(N(X,d_{f,n},\epsilon)).
$$

If $f$ is a Lipschitz self-map of a compact $m$-manifold with Lipschitz constant $K$, it's easy to see that 
$h(f) \le mK$.   The upper bound is attained in cases such as $x \mapsto K x$
acting on the torus $\R^n/\Z^n$, where $K$ is an integer.  On the other hand,
for a
continuous map $f$ that is not Lipschitz,  $h(f)$ need not be finite, even for
simple situations such as homeomorphisms of $S^2$ or continuous maps of intervals.

A differentiable map $f$ of an interval to itself is \emph{postcritically finite}
or \emph{critically finite} if the union of forward orbits of the critical points is finite.  In particular, the set of critical points for
$f$ must be finite.  

For a map $f$ that is not differentiable, we can define any point that is a local maximum or local minimum to be a turning point, or topological critical point. (Note that with this definition, not all smooth critical points are topological critical points.)  Let $c(f)$ denote the \emph{modality}, or number of turning points for such a map, and let $\Var(f)$ denote the total variation of $f$, \emph{i.e.} its arclength considered as a path.
For maps of the interval to itself with finitely many critical points, there are two simple ways to characterize the topological entropy:

\begin{theorem}[Misiurewicz-Szlenk, \cite{MisiurewiczSzlenk2},\cite{MisiurewiczSzlenk1}]\label{Misiurewicz-Szlenk}
For a continuous map $f$ of an interval to itself with finitely many turning points, the
topological entropy equals 
$$ h(f) = \lim_{n \rightarrow \infty}  \frac{1}{n}\log( \Var(f^n)) = \lim_{n \rightarrow \infty} \dfrac{1}{n} \log c(f^n).
$$

\end{theorem}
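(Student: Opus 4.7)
The plan is to show all three quantities coincide through a chain of inequalities. Let $a_n = c(f^n)+1$ count the maximal intervals on which $f^n$ is monotone, let $V_n = \Var(f^n)$, and write $P_n$ for the monotonicity partition of the interval. A basic observation, to be verified first, is that $P_n$ refines $P_m$ for $m \le n$: if $f^{n-1}$ had an interior local extremum on some piece on which $f^n$ is monotone, that extremum would persist under post-composition with $f$ (regardless of whether $f$ itself is locally monotone at the extremal value), contradicting monotonicity of $f^n$; hence every $f^i$ with $i \le n$ is monotone on each $J \in P_n$. Next, a turning point of $f^{n+m} = f^m \circ f^n$ is either a turning point of $f^n$ or a preimage under $f^n$ of a turning point of $f^m$; since each turning point of $f^m$ has at most $a_n$ preimages, this yields $a_{n+m} \le a_n a_m$, and Fekete's lemma produces the limit $\lambda := \lim \tfrac{1}{n}\log a_n$. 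The trivial estimate $V_n \le a_n\,|I|$ bounds $\limsup \tfrac{1}{n}\log V_n \le \lambda$.

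For the upper bound $h(f) \le \lambda$, I would fix an $(n,\epsilon)$-separated set $S$ and restrict to a piece $J \in P_{n-1}$ containing ordered points $x_1 < \cdots < x_k$ of $S$. Since each $f^i|_J$ is monotone for $i < n$, the telescoping estimate
$$
(k-1)\epsilon \;\le\; \sum_{j=1}^{k-1} d_{f,n}(x_j,x_{j+1}) \;\le\; \sum_{i=0}^{n-1}\sum_{j=1}^{k-1}|f^i(x_j)-f^i(x_{j+1})| \;\le\; \sum_{i=0}^{n-1}|f^i(J)|
$$
holds. Summing over $J \in P_{n-1}$ yields $|S| \le a_{n-1} + \epsilon^{-1}\sum_{i<n} V_i$, from which $h(f) \le \lambda$ follows on sending $n \to \infty$ and then $\epsilon \to 0$.

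For the matching lower bound, in each piece $J \in P_{n-1}$ I would pull back an $\epsilon$-spaced set of roughly $|f^{n-1}(J)|/\epsilon$ points from $f^{n-1}(J)$ through the monotone branch $f^{n-1}|_J$, obtaining on the order of $V_{n-1}/\epsilon$ candidates in total. Within a single piece they are automatically $(n,\epsilon)$-separated via their $f^{n-1}$-images; the delicate point, and the main technical obstacle, is arranging separation across pieces. I would handle this by choosing representatives carefully and using that different pieces of $P_{n-1}$ are separated by turning points whose forward iterates hit critical points of $f$, so for $\epsilon$ smaller than the minimum gap between critical values, orbits starting in different pieces must straddle a turning point of $f$ at some intermediate iterate and thus separate by a definite amount. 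This yields $h(f) \ge \limsup \tfrac{1}{n}\log V_n$. Closing the loop requires $\limsup \tfrac{1}{n}\log V_n \ge \lambda$, ruling out the apparently dangerous case in which most pieces $J \in P_n$ have tiny images: continuity forces consecutive image intervals $f^n(J)$ to share endpoints drawn from the iterated critical values of $f$, a combinatorially constrained set, which prevents the exponential rates of $V_n$ and $a_n$ from differing and completes the proof.
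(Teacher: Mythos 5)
The paper does not prove this theorem; it cites it to Misiurewicz--Szlenk, so there is no internal proof to compare against. Judged on its own terms, your argument gets the easy inequalities right but leaves the two hard ones as sketches that do not actually close. The refinement claim $P_m \preceq P_n$, the subadditivity $a_{n+m}\le a_n a_m$ with Fekete, the bound $V_n\le a_n\,|I|$, and the estimate $|S|\le a_{n-1}+\epsilon^{-1}\sum_{i<n}V_i$ giving $h(f)\le \lim\frac1n\log a_n$ are all sound.

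The first genuine gap is the lower bound $h(f)\ge\limsup\frac1n\log V_n$. Two candidate points $x,y$ in adjacent pieces $J_1,J_2\in P_{n-1}$ that lie close to the common turning point $t$ can have $|f^i(x)-f^i(y)|<\epsilon$ for \emph{every} $i<n$: by continuity both orbits shadow the orbit of $t$. Your suggested remedy --- that at the first time $i$ with $f^i(t)$ a turning point of $f$, the images $f^i(x), f^i(y)$ ``straddle'' a critical point and therefore separate --- does not follow; straddling a critical point is compatible with $|f^i(x)-f^i(y)|$ being arbitrarily small. Discarding an $\epsilon$-collar around every piece boundary also fails, because there is no control preventing essentially all of $|f^{n-1}(J)|$ from being carried by those collars, or preventing most pieces from having length $<2\epsilon$, in which case the surviving count is bounded independently of $n$. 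The second gap is the ``closing the loop'' step $\limsup\frac1n\log V_n \ge \lim\frac1n\log a_n$. This is the substantive content of the Misiurewicz--Szlenk theorem: a priori, $V_n$ could grow at a strictly smaller exponential rate than $a_n$ if most monotone pieces of $f^n$ had exponentially small images, and ruling this out requires a real argument (horseshoe construction, or an analysis of the Markov structure of the lap decomposition). The sentence about ``continuity forces consecutive image intervals to share endpoints \dots which prevents the exponential rates from differing'' asserts the conclusion without giving a mechanism; sharing endpoints drawn from the post-critical set does not by itself prevent the shared images from being exponentially short. Until both of these are filled, you have established only $\limsup\frac1n\log V_n\le\lim\frac1n\log a_n$ and $h(f)\le\lim\frac1n\log a_n$, which is strictly weaker than the theorem.
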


In particular, these are actual limits, not just limits of lim sups.  There
are good algorithms to actually compute the entropy \cite{MilnorThurstonKneading}.

\begin{figure}[htbp]
\centering
\includegraphics[width=3.5in]{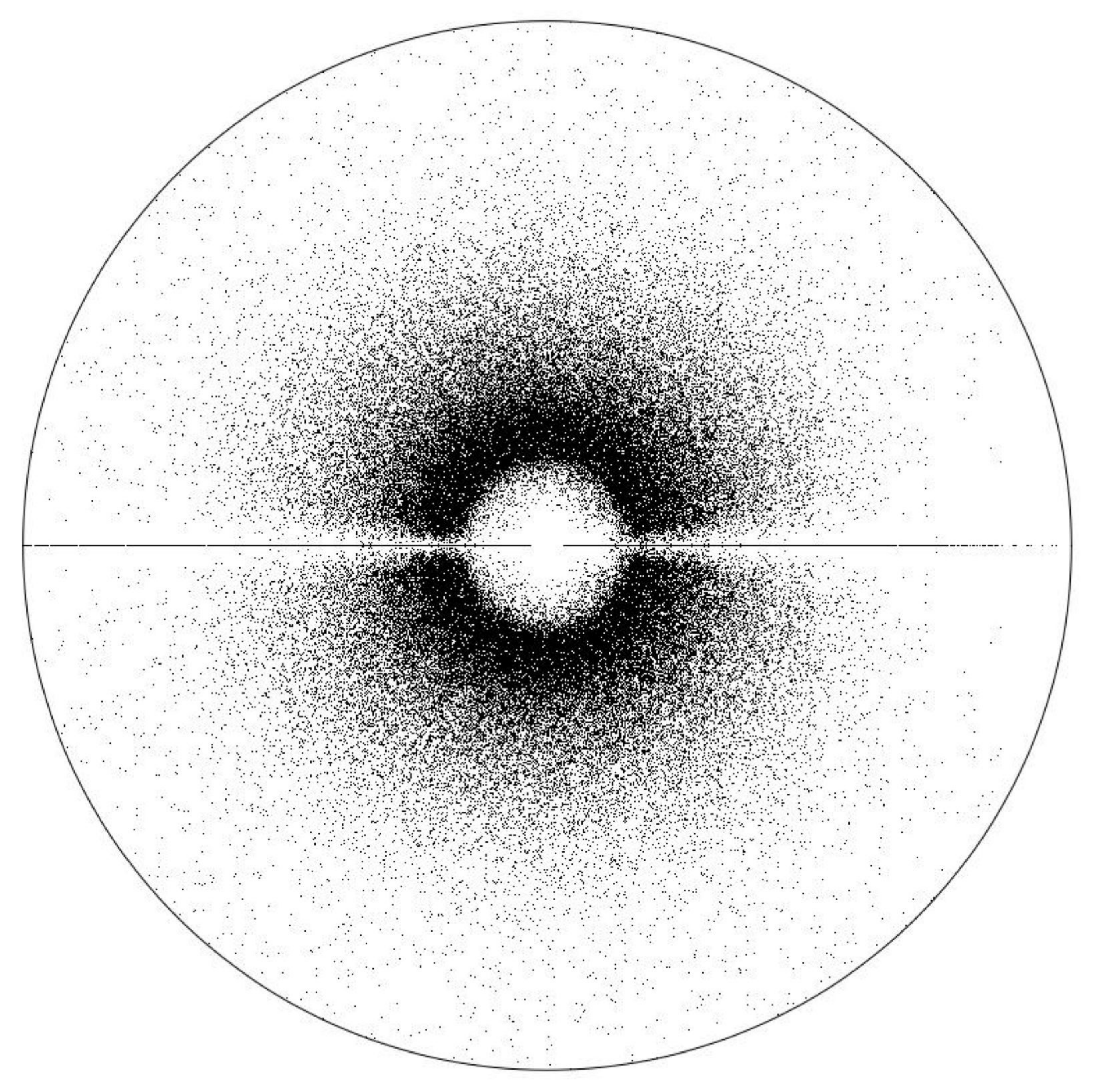}
\caption{ This plot shows the roots of the minimal polynomials for 5932 degree 21 Perron numbers, obtained by sampling 20,000 monic degree 21 polynomials
with integer coefficients between 5 and -5, and keeping those that have a root in $[1,2]$ larger than all other roots.}
\label{fig:PerronPoints}
\end{figure}

The first main goal of this paper is to characterize what values of entropy
can occur for postcritically finite maps:

\begin{theorem}\label{Main1} 
A positive real number $h$ is the topological entropy of a postcritically finite self-map of the unit interval if and only if $\exp(h)$ is
an algebraic integer that is at least as large as the absolute value of any
conjugate of $\exp(h)$. The map may be chosen to be a polynomial all of whose critical points are in $(0,1)$.
\end{theorem}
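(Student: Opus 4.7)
The theorem splits into necessity and sufficiency; my plan is to handle each in turn, with the bulk of the work in sufficiency.

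For necessity, assume $f\colon[0,1]\to[0,1]$ is PCF with entropy $h=h(f)>0$. Let $P$ be the (finite) union of $\{0,1\}$, the critical points of $f$, and all their forward images. Then $f(P)\subseteq P$, so the closures of the components of $[0,1]\setminus P$ form a Markov partition $I_1,\dots,I_k$ on each of which $f$ is monotone. Let $M$ be the $\{0,1\}$-matrix with $M_{ij}=1$ iff $f(I_j)\supseteq I_i$. A direct count shows that $c(f^n)$ grows like $\rho(M)^n$, so Theorem~\ref{Misiurewicz-Szlenk} gives $\exp(h)=\rho(M)$. Since $M$ has integer entries, its characteristic polynomial is monic integral, so $\rho(M)$ is an algebraic integer; its minimal polynomial divides the characteristic polynomial, so all Galois conjugates of $\rho(M)$ are eigenvalues of $M$, hence of modulus $\le\rho(M)$ by Perron--Frobenius.

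For sufficiency, let $\lambda>1$ be weak Perron. The plan has two stages. First, by Lind's theorem on spectral radii of non-negative integer matrices, $\lambda=\rho(A)$ for some non-negative integer matrix $A$. I would realize $A$ as the transition matrix of a continuous piecewise-linear map $g\colon[0,1]\to[0,1]$ with all slopes $\pm\lambda$: order the states $1,\dots,k$ as a partition of $[0,1]$, set $g$ affine of slope $\pm\lambda$ on each piece with the prescribed image, and choose signs to get continuity (enlarging $A$ combinatorially if necessary to make this consistent). Every turning point of $g$ is a partition endpoint mapping to a partition endpoint, so $g$ is PCF, and Theorem~\ref{Misiurewicz-Szlenk} yields $h(g)=\log\lambda$. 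Second, replace $g$ by a real polynomial $f$ with the same kneading data and all critical points in $(0,1)$: assign one critical point per turning point of $g$, prescribe each critical value according to the PL model, and find $f$ either by an explicit Hermite-type interpolation at sufficiently large degree, or inside the real PCF locus of degree-$d$ polynomials by a deformation argument that exploits monotonicity of entropy along natural one-parameter families (Milnor--Thurston). Since entropy depends only on the kneading invariant, $h(f)=h(g)=\log\lambda$.

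The polynomial realization step is the main obstacle, as it requires showing that any combinatorially admissible PCF kneading type is realized by a real polynomial whose critical set is precisely the prescribed interior set, with no spurious critical points. I expect to address this by an analogue of Thurston's rigidity/realization theorem for real one-dimensional dynamics: model the kneading data as a marked combinatorial object, set up a Teichm\"uller-style parameter space of degree-$d$ polynomials marked by this data, and exhibit a fixed point of a pullback operator giving the desired $f$. Verifying that the resulting polynomial has all critical points in $(0,1)$ (for example by choosing the ambient degree $d$ no larger than the modality of $g$) is a secondary but necessary bookkeeping point.
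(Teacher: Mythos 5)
Your treatment of necessity is fine and matches the paper in substance: the postcritical set gives a Markov partition, the transition matrix is a non-negative integer matrix, and Perron--Frobenius plus integrality gives the weak Perron property of $\exp(h)$.

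The sufficiency direction has a genuine gap at exactly the place you gloss over. You start from Lind's theorem, obtain a primitive non-negative integer matrix $A$ with $\rho(A)=\lambda$, and then write that you will ``set $g$ affine of slope $\pm\lambda$ on each piece with the prescribed image, and choose signs to get continuity (enlarging $A$ combinatorially if necessary to make this consistent).'' This parenthesis is carrying the entire weight of the theorem, and it is not a routine bookkeeping step. A matrix is the (extended) incidence matrix of a continuous interval self-map if and only if it satisfies a rigid structural condition (Proposition \ref{IncidenceMatrixCharacterization} in the paper): in each column the nonzero entries form a consecutive block, the odd entries form a consecutive sub-block, and the odd blocks in successive columns link end-to-end into a chain governed by the induced map on partition endpoints. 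A generic matrix supplied by Lind's theorem fails this badly, and there is no evident ``combinatorial enlargement'' that both forces this structure and preserves the spectral radius $\lambda$; indeed, the paper does \emph{not} proceed by massaging a given matrix. Instead it builds an admissible incidence matrix from scratch out of the semigroup $S_\lambda\subset O_\lambda$, tracking residues mod $2O_\lambda$ so that the odd-block chain condition can be met --- and even then this only yields an incidence matrix with leading eigenvalue $\lambda^N$ for a large power $N$, not $\lambda$ itself. Recovering a $\lambda$-expander then requires a further nontrivial device: implanting the $\lambda^N$-map as the return map of the critical orbit of a zero-entropy quadratic in the period-doubling cascade (Section \ref{sec:ProofMain1c}), and separately reducing the weak-Perron case to the strong-Perron case via the fact that a suitable power of a weak Perron number is strong Perron. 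None of these obstacles are visible in your outline, and the step you flag as ``the main obstacle'' (polynomial realization of a given PCF kneading type, which is comparatively standard via Hubbard-tree/Thurston realization) is actually the lesser of the two. As written, the proposal would stall at producing the PL model, before the polynomial step is ever reached.
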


\medskip

Two maps $f_1, f_2: X \rightarrow X$ are \emph{conjugate} if there is a homeomorphism $g$ of $X$ conjugating one to the other, \emph{i.e.} $g \circ f_1 = f_2 \circ g$.  They are \emph{semiconjugate} if there is a map $g$ satisfying the condition that is continuous and
surjective, but not necessarily a homeomorphism.  Basically a semiconjugacy can collapse out certain kinds of subsidiary behavior of the dynamics.

Many phenomena of 1-dimensional dynamics are irrelevant for the study of entropy; there is 
a relatively simple family of non-smooth examples that has central importance, when $f$ is $\PL$ (piecewise-linear), and $|f'| = \lambda > 1$ is constant, wherever $f'$ exists.  We call such an $f$ a \emph{uniform expander}, or a \emph{uniform $\lambda$-expander} if we want to be more specific.  These maps are often called maps with \emph{constant slope}.
The importance of the uniform expanders is indicated by this theorem:

\begin{theorem}[\cite{MilnorThurstonKneading}]\label{PLEntropy}
Every continuous self-map $g$ of an interval with finitely many turning points is semi-conjugate to a uniform $\lambda$-expander $\PL(g)$ with the  the same topological entropy $\log(\lambda)$. If $g$ is postcritically finite, so is $\PL(g)$. (But if $\PL(g)$
is postcritically finite, it does not imply that $g$ is postcritically finite).
\end{theorem}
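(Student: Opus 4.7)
Let $\lambda=\exp(h(g))$; I will assume $\lambda>1$ (the case $\lambda=1$ is a separate trivial construction). The idea is to first construct a semiconjugacy $\phi:[0,1]\to[0,1]$ directly from the variation data of iterates of $g$, and then to define $\PL(g)$ as the unique map making $\PL(g)\circ\phi=\phi\circ g$. Take as candidate
\[
\phi_n(x)\;=\;\frac{\Var(g^n|_{[0,x]})}{\Var(g^n|_{[0,1]})},
\]
which is a continuous nondecreasing surjection of $[0,1]$ onto itself, and set $\phi=\lim_n\phi_n$.

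\textbf{Key recursion.} Let $0=a_0<a_1<\cdots<a_\ell=1$ be the turning points of $g$, so that $g$ is monotone on each lap $I_k=[a_{k-1},a_k]$ with orientation $\varepsilon_k=\pm1$. For $x\in I_k$, decomposing $[0,x]$ into its intersections with the laps and using $\Var(g^n\circ g|_J)=\Var(g^n|_{g(J)})$ on each monotone piece $J$ yields
\[
\Var(g^{n+1}|_{[0,x]})\;=\;\sum_{j<k}\Var(g^n|_{g(I_j)})\;+\;\Var(g^n|_{g([a_{k-1},x])}).
\]
Theorem~\ref{Misiurewicz-Szlenk} gives $\Var(g^n)^{1/n}\to\lambda$, and the recursion lets one show (by extracting a subsequential limit via Helly's theorem and then using the recursion to propagate agreement from the finitely many limit values $\phi(g(a_j))$) that $\phi_n$ actually converges to a continuous nondecreasing $\phi$. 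Passing to the limit in the recursion produces the functional equation, on each lap $I_k$,
\[
\phi(g(x))\;=\;\varepsilon_k\,\lambda\,\phi(x)+c_k,
\]
with $c_k$ determined by the values of $\phi$ at the turning points.

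\textbf{Building $\PL(g)$ and finishing.} The functional equation shows that $\phi(g(x))$ depends only on $\phi(x)$ together with the lap of $x$, and the two expressions for adjacent laps agree at the shared turning point; therefore the formula $\PL(g)(y)=\phi(g(x))$ for any $x\in\phi^{-1}(y)$ defines a continuous map $\PL(g):[0,1]\to[0,1]$ with $\PL(g)\circ\phi=\phi\circ g$. The same equation shows $\PL(g)$ is affine with slope $\varepsilon_k\lambda$ on the $\phi$-image of $I_k$, so it is a uniform $\lambda$-expander. Its topological entropy equals $\log\lambda$ directly from Theorem~\ref{Misiurewicz-Szlenk}, since for a constant-slope map $\Var(\PL(g)^n)=\lambda^n\Var(\PL(g))$. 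Finally, if $g$ is postcritically finite, then the turning points of $\PL(g)$ are $\phi$-images of those of $g$ and orbits push forward under $\phi$, so the postcritical set of $\PL(g)$ is the $\phi$-image of that of $g$ and is finite.

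\textbf{Main obstacle.} The hardest part is showing that $\phi_n$ actually converges to a continuous limit, rather than merely having a subsequential limit that could a priori be discontinuous. This is where the kneading-series machinery of \cite{MilnorThurstonKneading} does the real work: there $\phi$ is given in closed form by a ratio of kneading power series evaluated at $t=1/\lambda$, whose radius of convergence is exactly $1/\lambda$, and this explicit description both proves continuity and yields the functional equation cleanly. The parenthetical failure of the converse is transparent from this viewpoint: $\phi$ may crush intervals of recurrent subsidiary dynamics of $g$ down to a single point of $\PL(g)$, so postcritical finiteness of $\PL(g)$ need not lift to $g$.
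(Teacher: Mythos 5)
The paper does not actually prove Theorem~\ref{PLEntropy}; it is cited from Milnor--Thurston, and the paper only sketches, for the \emph{postcritically finite} case, a different construction: subdivide by the postcritical set, form the nonnegative integer incidence matrix, take a Perron--Frobenius eigenvector, and use its entries (normalized) as new interval lengths. Your $\phi_n$-approach is instead a direct-limit version of the Milnor--Thurston variational construction, so the comparison is really against that reference rather than against anything in this paper.

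Your sketch has a genuine gap that goes beyond what you flag. You implicitly need not just $\Var(g^n)^{1/n}\to\lambda$ (which Theorem~\ref{Misiurewicz-Szlenk} gives) but the much stronger $\Var(g^{n+1})/\Var(g^n)\to\lambda$ in order to pass to the limit in your recursion and extract the coefficient $\lambda$ in the functional equation. That ratio need \emph{not} converge: when the dynamics has a periodic cycle of intervals of period $p>1$, lap counts and variations can oscillate with period $p$ around the geometric growth $\lambda^n$, so the ratio can fail to settle. This is precisely why Milnor--Thurston work with the Abel sum $\sum_n t^n\Var(g^n|_{[0,x]})$ as $t\uparrow 1/\lambda$ rather than the Ces\`aro/direct limit: Abel summation tames these oscillations. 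Your proposed fix (Helly for a subsequential limit, then ``propagate agreement via the recursion'') does not close this, because the recursion itself does not hold in the limit without the ratio convergence, and the subsequential Helly limit can still be discontinuous. You correctly identify that the real proof lives in the kneading-series formalism, but the gap is in both the existence of $\lim\phi_n$ \emph{and} the existence of $\lim\Var(g^{n+1})/\Var(g^n)$, so the proposal as written should be regarded as a plausible outline deferring to \cite{MilnorThurstonKneading} rather than a proof.

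Two smaller points. First, the paper's incidence-matrix construction for the postcritically finite case is cleaner than yours there: it bypasses any limiting process entirely and makes the algebraicity of $\lambda$ manifest, which is what this paper actually needs. Your variational approach is more general (it handles non-postcritically-finite $g$), but for the paper's purposes the PF-eigenvector route is the relevant one. Second, your closing sentences about preservation of postcritical finiteness and about why the converse fails (collapse of subsidiary dynamics by $\phi$) are correct and match the paper's parenthetical remark.
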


In \cite{ALM}, a more general version of theorem \ref{PLEntropy} is proven, which applies in more circumstances, including for instance piecewise continuous maps and self-maps of graphs.

In other words, theorem \ref{Main1} reduces to the study of expansion constants
for 1-dimensional uniform expanders.

In the case of a postcritically finite map $f$, a uniform expander model is
easily computed.  If necessary, first trim the domain interval until it maps to itself surjectively, by taking the intersection of its forward images. If this is a point, then the entropy is 0.  Otherwise, the two endpoints are either images of an endpoint, or images of a turning point. In this case,
conjugate by an affine transformation to make the  interval $[0,1]$.

If we now subdivide $[0,1]$ by cutting at the union of postcritical orbits (including the critical points) into intervals $J_i$,  each $J_i$ maps homeomorphically to a finite union of other subintervals. If there is a uniform expander $F$ with the same qualitative behavior, that is, having an isomorphic subdivision into subintervals each mapped homeomorphically to the corresponding union of other subintervals, then the lengths of the intervals satisfy a linear condition:
the sum of the
lengths of intervals $J_j$ hits is $\lambda$ times the length of $J_j$. In other words, the lengths of the intervals of the subdivision define a positive eigenvector for a non-negative matrix, with eigenvalue $\lambda$.

The Perron-Frobenius theorem gives necessary and sufficient conditions for this to exist. Here is some terminology:
a non-negative matrix
is \emph{ergodic} if the sum of its positive powers is strictly positive, and it is  \emph{mixing} if some power (and hence, all subsequent powers) is strictly positive.  The incidence matrix for $f$ is ergodic if and only if for each pair of intervals $J_i$ and $J_j$, some $f^n(J_i)$ contains $J_j$. The incidence matrix is mixing if and only if for each $J_i$, some $f^n(J_i)$ image covers all intervals.

The Perron-Frobenius theorem says that any non-negative matrix has at least one non-negative eigenvector with non-negative eigenvalue $\ge$ the absolute value of any other eigenvalue. If the matrix is ergodic, there is a unique strictly positive eigenvector; its eigenvalue is automatically [strictly] positive.  

From any
non-negative eigenvector for the incidence matrix, we can make a uniformly expanding model by subdividing the unit interval into subintervals whose lengths  equal the corresponding coordinate of the eigenvector normalized to have $L^1$ norm $=1$. We thus obtain a $PL$ map; its topological entropy is the log of the eigenvalue.

If all the entries of the matrix are integers, then its characteristic polynomial has integer coefficients, so it's an immediate corollary that the expansion constant for a postcritically finite uniform $\lambda$-expander is at least as large as the absolute value of its Galois conjugates.

In \cite{MilnorThurstonKneading} there is a more general formula (very quick on a computer) for a semiconjugacy to a uniform expander for a general map with finitely many critical points.

\medskip

Doug Lind proved a converse to the integer Perron-Frobenius theorem: 

\begin{theorem}[\cite{LindConversePerron}]\label{ConversePerron}
For any real algebraic integer $\lambda>0$ that is strictly larger than its Galois conjugates (in absolute value), there exists a non-negative integer matrix with some power that is strictly positive and has $\lambda$ as an eigenvalue.
\end{theorem}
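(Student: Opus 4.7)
The plan is to realize $\lambda$ as the Perron eigenvalue of a non-negative integer matrix obtained by restricting multiplication by $\lambda$ on $\Z[\lambda]$ to a carefully chosen finite positive semigroup basis.

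First I would set up the ambient linear algebra. Let $p(x)\in\Z[x]$ be the minimal polynomial of $\lambda$, of degree $d$, and let $L$ be its companion matrix, an integer endomorphism of $\R^d$ preserving the lattice $\Lambda:=\Z^d$ and having spectrum equal to the Galois conjugates of $\lambda$. By hypothesis $\lambda$ is a simple eigenvalue of $L$ of strictly maximal modulus; fix a real eigenvector $u$ and write $\R^d=\R u\oplus H$ for the corresponding $L$-invariant splitting. Choose a norm on $H$ for which $\|L|_H\|\le\mu$ with $\mu<\lambda$ (possible because of the spectral gap). Then for each $r>0$ the round cone
$$C_r:=\{au+h:a>0,\ h\in H,\ \|h\|\le r a\}$$
satisfies $L(C_r)\subseteq C_{r\mu/\lambda}\subsetneq C_r$, i.e., $L$ projectively contracts $C_r$ strictly into its interior.

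The crux is to replace $C_r$ by a \emph{rational polyhedral} $L$-invariant cone $P$ with $u\in\interior(P)$. I would pick finitely many rational rays $v_1,\dots,v_n\in\Lambda\cap C_r$ so that $P:=\operatorname{cone}(v_1,\dots,v_n)$ is sandwiched as $C_{r\mu/\lambda}\subseteq P\subseteq C_r$; then $L(P)\subseteq L(C_r)\subseteq C_{r\mu/\lambda}\subseteq P$, so $P$ is $L$-invariant, rational, and polyhedral, with $u$ in its interior. Threading a rational polyhedral cone between two projectively nested round $L$-invariant cones — so that its generators are lattice vectors while its facets separate the inner from the outer cone — is the main technical obstacle, and amounts to a compactness-and-rational-approximation argument on the sphere at infinity of $\R^d$.

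Given such a $P$, Gordan's lemma produces a finite Hilbert basis $w_1,\dots,w_N$ generating $P\cap\Lambda$ under addition. Since $L(P)\subseteq P$, each $L(w_i)$ lies in $P\cap\Lambda$ and hence is a non-negative integer combination $L(w_i)=\sum_j A_{ji}w_j$, defining a non-negative integer matrix $A$. To identify its spectral radius, note that the surjection $\pi:\R^N\to\R^d$ sending $e_i\mapsto w_i$ intertwines $A$ with $L$ (i.e., $L\pi=\pi A$); dualizing shows that every eigenvalue of $L$ — in particular $\lambda$ — is an eigenvalue of $A$, so $\rho(A)\ge\lambda$. Conversely, if $p>0$ is a Perron-Frobenius eigenvector of $A$ with eigenvalue $\rho(A)$, then $\pi(p)=\sum p_i w_i$ is a non-zero element of the pointed cone $P$ and an eigenvector of $L$ with eigenvalue $\rho(A)$; since $L$ has no eigenvalue of modulus greater than $\lambda$, this forces $\rho(A)=\lambda$. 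To arrange that some power of $A$ is strictly positive I would, if necessary, enlarge the generating set, exploiting that $L^k(w_i)/\lambda^k\to c_i\,u\in\interior(P)$ so that for large $k$ the image $L^k(w_i)$ hits every Hilbert generator; irreducibility of the enlarged $A$ then follows, and aperiodicity can be enforced by a further small enlargement — a routine adjustment compared to the cone-construction step above.
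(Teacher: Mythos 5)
Your proof is correct and follows essentially the same route as the paper: both sandwich a rational polyhedral $\lambda$-invariant cone around the dominant eigendirection, take a finite generating set for its lattice points as a positive semigroup (you cite Gordan's lemma; the paper uses a projective-compactness argument), and lift multiplication by $\lambda$ to a non-negative integer matrix acting on the free abelian group on those generators. Your spectral-radius verification via the intertwining $L\pi=\pi A$ is a more explicit version of the paper's appeal to the strictly positive dual eigenvector given by projection to the $\lambda$-line, and your closing sketch on forcing a mixing power addresses a point the paper's proof leaves implicit.
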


\begin{figure}[htbp]
\centering
\includegraphics[width=4.6in]{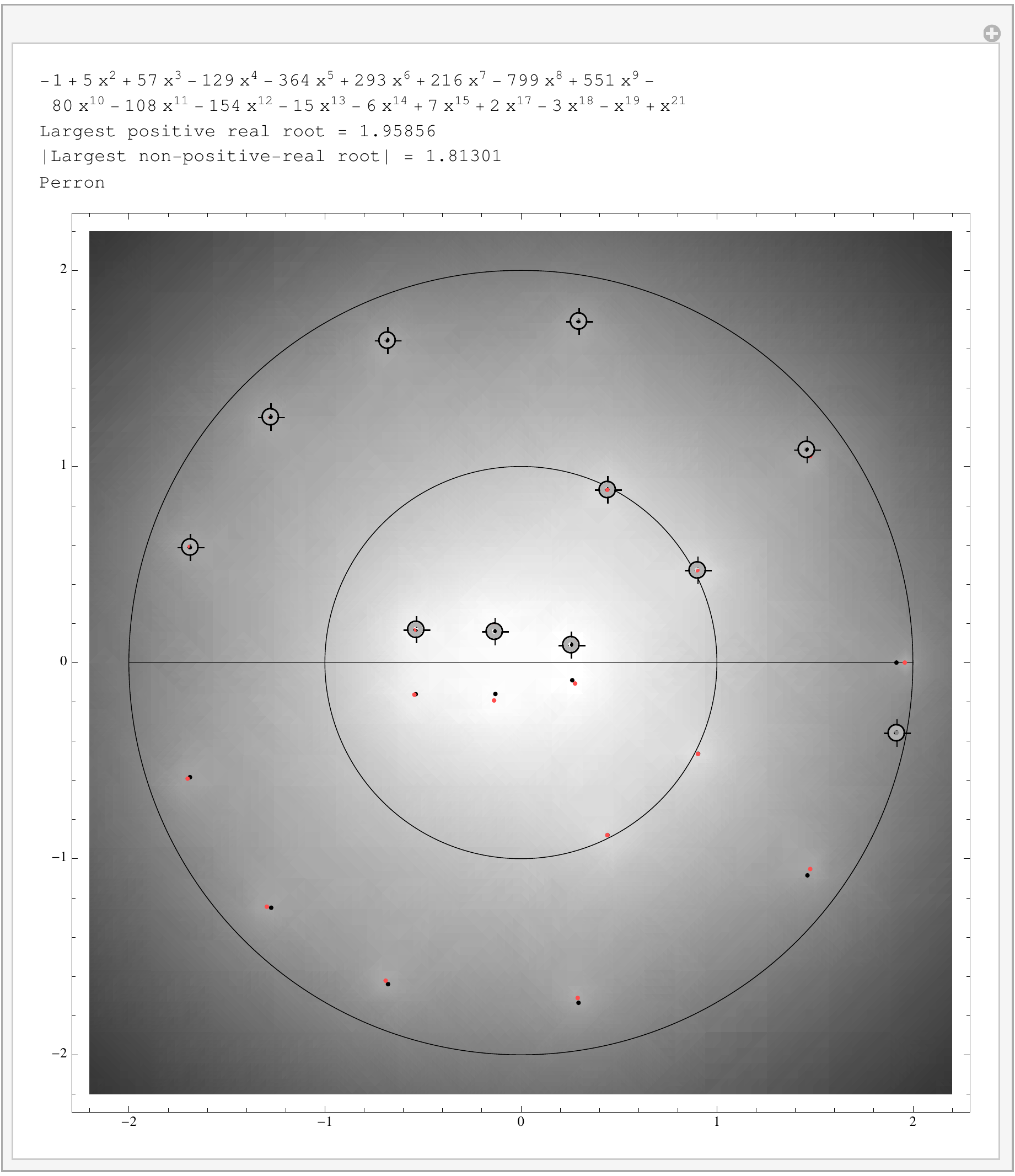}
\caption{This is an irreducible polynomial $P$ of degree 21 defining a Perron number, $\lambda = 1.95856$.  Although $\lambda$ is a unit $< 2$, the map $x \mapsto \lambda \abs x - 1$ cannot be
postcritically finite since $P$ has (several) roots not in (far away from) the region shown in figure \ref{fig:KneadingRoots}.
To construct $P$, first a monic real polynomial $P_0$ was defined by its roots with a mouse (the black dots). The coefficients of $P_0$ were rounded to define the integer polynomial $P$, whose roots are shown in red,
sometimes obscuring the roots of $P$. The constant term was made small by balancing roots inside and outside the unit circle. When roots for $P_0$ are chosen so that those away from the unit circle are spaced well apart (relative to the  sizes of coefficients), they tend to be fairly stable under rounding. The shading, proportional to $\log(1 + \abs P )$, is a guide to stability: clicking new roots into darker areas is typically stabilizing.}
\label{fig:ChosenPerron}
\end{figure}

We will prove Lind's theorem on our way to other results, in section
\ref{sec:ProofMain1a}.

In view of Lind's theorem together with the Perron-Frobenius theorem, these numbers are called \emph{Perron} numbers. A real algebraic integer $\lambda$ 
that satisfies the weak inequality $\lambda \ge |\lambda^\alpha|$ where $\alpha$
ranges over the Galois group of $\lambda$ is a \emph{weak Perron number}.

There are two important (and better known) special cases of Perron numbers.
A positive real algebraic integer is a \emph{Pisot number}, or \emph{Pisot-Vijayaraghavan number} or \emph{PV number} if all its Galois conjugates are
in the open unit disk.  Since the product of all Galois conjugates is a nonzero
integer, $\lambda$ is bigger than its conjugates. It turns out that
the set of Pisot numbers is a closed subset of $\R$.
If  $\lambda > 1$ is a real algebraic integer that has at least one conjugate
on the unit circle, and all conjugates are in the closed unit disk, then $\lambda$ is called a
\emph{Salem number}.  Since the complex conjugate of a point on the unit circle
is its inverse, every element of the Galois conjugacy class of a point on the unit circle is also Galois conjugate to its inverse and in particular
$\lambda$ is Galois conjugate to $1/\lambda$.  Therefore $1/\lambda$ is the only Galois conjugate of $\lambda$ in the open unit
disk, since the inverse of any other conjugate in the open unit disk would give
another conjugate of $\lambda$ outside the unit disk.

\emph{Note}: The size of the matrix in theorem \ref{ConversePerron} can be larger than the degree of $\lambda$. One way to see this (suggested by Doug Lind) is by considering Perron numbers with negative trace, like the positive real root of $p(t)=t^3+3t^2-15t-46$; $p$ cannot be the characteristic polynomial for a $3\times 3$ matrix with non-negative entries as its trace would be $-3$.  In fact, for any integer $n > 0$ there are cubic Perron numbers $\lambda$ that are not eigenvalues for non-negative matrices smaller than $n \times n$.


The proof of theorem \ref{Main1} uses techniques motivated by Doug Lind's methods.

It is also interesting to investigate what happens for postcritically finite maps subject to a bound on the number of turning points, in particular, a single turning point (\emph{i.e.} for quadratic polynomials).  The situation is very different.  Figure \ref{fig:KneadingRoots} shows the Galois conjugates of $\exp(h)$
for postcritically finite real quadratic maps. This is a path-connected set, with much structure visible. Most Perron numbers between $1$ and $2$ do not have
roots in this set.  For example, figure \ref{fig:PerronPoints} shows a sampling
of degree 21 polynomials with coefficients between -5 and 5 that happen to define a Perron number between 1 and 2.
(out of 20000 random polynomials, 5937 fit the condition).   These however are
not random Perron numbers of degree 21: more typically, many of the coefficients
are much larger.  Figure \ref{fig:ChosenPerron} shows a degree 21 example constructed by hand, first specifying a collection of real points and pairs of complex conjugate
points in the disk of radius 2, expanding the monic polynomial with those points as roots, and rounding the coefficients to the nearest integers.  With care in spacing and placement of roots, the integer polynomial has roots near the given choices. (When the points away from the unit circle cluster too much, their positions become unstable with respect to rounding).

\medskip
More generally, if $\Gamma$ is a finite graph
and $f: \Gamma \rightarrow \Gamma$
is a continuous map which is an embedding when restricted to any edge, we will
say that $f$ is \emph{postcritically finite} if the forward orbit of every vertex is finite.  If $f$ has the additional property that for all $k$, $f^k$ restricted to any edge is an immersion (it is an embedding on a sufficiently
small neighborhood of any point), then $f$ is a \emph{train track map}.

Entropy for graph maps behaves similarly to entropy for intervals:
\begin{theorem}[Alsed{\'a}-Llibre-Misiurewicz \cite{ALM}]
Let $f: \Gamma \to \Gamma$ be a continuous self-map of a finite graph which has
finitely many exceptional points $x(f)$ where $f$ is not a local homeomorphism.
Then
\[h(f) = \lim_{n \to \infty} \frac{1}{n} \log\Var(f^n) \]
If $f$ is a degree $d$ covering map $S^1 \to S^1$, this yields $h(f) = \log(d)$; otherwise,
the entropy also satisfies
\[ h(f) =\lim_{n \to \infty} \frac{1}{n} \log( x(f^n)).
\]
\end{theorem}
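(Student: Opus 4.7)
The plan is to adapt the Misiurewicz-Szlenk proof of Theorem \ref{Misiurewicz-Szlenk}, replacing the turning points of an interval map by the exceptional points of a graph map. Let $X_n \subset \Gamma$ denote the union of the vertex set of $\Gamma$ with the preimages $f^{-k}(x(f))$ for $0 \le k < n$; at any point of $\Gamma \setminus X_n$, the iterate $f^n$ is a local homeomorphism. Cutting $\Gamma$ at $X_n$ produces finitely many half-open arcs $I_1, \dots, I_{L(n)}$ (the laps of $f^n$), each of which $f^n$ embeds monotonically into a single edge of $\Gamma$. Up to additive constants depending only on the combinatorics of $\Gamma$, $L(n)$ is comparable to $x(f^n)$, and the variation $\Var(f^n) = \sum_j \length(f^n(I_j))$ lies between $L(n)$ times the shortest edge length and $L(n)$ times the diameter of $\Gamma$.

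First I would prove the upper bound $h(f) \le \limsup \tfrac{1}{n}\log L(n)$. Fix a metric on $\Gamma$. Given $\epsilon > 0$, cover each edge by a fixed finite family of arcs of diameter $<\epsilon$. Any $(n,\epsilon)$-separated orbit is then determined, up to a bounded factor $C(\epsilon)$, by the sequence of laps of $f, f^2, \dots, f^n$ that its first $n$ iterates visit, because inside a single lap of $f^k$ the map is monotone onto an edge-piece and two orbits occupying the same cover element at every step agree within $\epsilon$. A pigeonhole count gives $N(\Gamma, d_{f,n}, \epsilon) \le C(\epsilon) L(n)$, and submultiplicativity $L(n+m) \le L(n) L(m) + O(1)$ makes the limit exist.

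The matching lower bound $h(f) \ge \lim \tfrac{1}{n}\log \Var(f^n)$ goes through essentially as in the interval case once one works edge-by-edge. If $\Var(f^n)$ grows like $\lambda^n$, then some edge $e$ carries laps whose total image length is proportional to $\lambda^n$; pulling back an $\epsilon$-net from those images along the homeomorphisms $f^n|_{I_j}$ produces on the order of $\lambda^n / \epsilon$ points whose full orbits through time $n$ are pairwise $\epsilon$-separated. Combined with the upper bound and the comparability of $L(n)$ with $\Var(f^n)$, this gives the variation formula. The second formula then follows from $L(n) = x(f^n) + O(1)$ in the generic case; in the degenerate case where $x(f^n)$ remains bounded, $f^n$ is a local homeomorphism on all iterates, and the only connected finite graph admitting a positive-entropy local-homeomorphism self-map is a circle with $f$ a degree $d$ covering, which is the excluded exception.

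The main obstacle is the lower bound: on an interval one sorts preimages linearly along the line, whereas on a graph several laps from different edges may be folded by $f^n$ onto the same image arc. I plan to sidestep this by performing all estimates after cutting at $X_n$, so that each lap is an embedded arc on which $f^n$ restricts to a homeomorphism onto a subarc of a single edge; the global graph structure then enters only through bounded combinatorial constants (number of edges, maximum vertex degree, and ratio of edge lengths), and laps with coincident images only help, by supplying additional genuinely distinct preimages for each point of the $\epsilon$-net.
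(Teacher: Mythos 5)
The paper does not prove this theorem; it simply cites Alsed\'a--Llibre--Misiurewicz, so there is no in-paper argument to compare yours against. Judged on its own, your upper bound is on the right track (cutting at $X_n$, counting laps, bounding separated points by laps times an $\epsilon$-dependent constant), but your lower bound has a genuine gap.

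The problem is the step where you pull back an $\epsilon$-net from the lap images and claim the preimages are pairwise $(n,\epsilon)$-separated. Two preimages of the same net point that lie in two different laps need not be $\epsilon$-separated under $d_{f,n}$: their $f^n$-images coincide, and their $f^k$-images for $k<n$ lie on opposite sides of an exceptional point of $f^{n-k}$ but can be arbitrarily close to it (and to each other). This happens generically, e.g.\ for two adjacent laps meeting at a critical point of $f^n$ that fold onto overlapping image arcs; the two preimages of a net point near the fold are close in $\Gamma$ and stay close for every $k<n$, and they agree at time $n$. Your final paragraph explicitly acknowledges that folding is the obstacle and says you will ``sidestep'' it by cutting at $X_n$, but that only makes the preimages \emph{distinct}, not \emph{$\epsilon$-separated}, and distinctness is not enough to bound $h(f)$ from below. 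This is exactly the delicate point in the interval case too; the known arguments resolve it not by pulling back a single $\epsilon$-net across all laps, but by constructing for each $\mu<\lim\frac1n\log c_n$ an $s$-horseshoe for some iterate $f^n$ with $\frac1n\log s>\mu$ (roughly: select a subfamily of laps whose $f^n$-images all contain a fixed arc, so that the restriction is Markov and has subshift entropy $\log s$, giving genuine separation at a scale determined by the fixed arc, not by $n$). Without a step of that kind, the lower bound is unproved. The submultiplicativity claim $L(n+m)\le L(n)L(m)+O(1)$ also needs the constant placed multiplicatively, $L(n+m)\le C\,L(n)L(m)$, to apply Fekete, but that is a minor matter. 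The dichotomy at the end (circle coverings vs.\ everything else) is stated correctly but is also left unjustified; it needs the observation that a positive-entropy local homeomorphism of a finite connected graph forces a circle component.
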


A self-map of a graph that is a homotopy equivalence defines an outer automorphism of its fundamental group, that is, an automorphism  up to conjugacy (since we're not specifying a base point that must be preserved).

Handel
and Bestvina \cite{BestvinaHandel} showed that for any outer automorphism that is irreducible in the sense that no free factor is preserved up to conjugacy, there is a graph $\Gamma$ and a train track map of $\Gamma$ to itself representing the outer automorphism. They also developed a theory of relative train tracks that addresses outer automorphisms that are reducible.  Train track theory is a powerful tool, parallel in many ways to pseudo-Anosov theory for self-homotopy-equivalences of surfaces.  Algebraically, you can look at the 
action of an outer automorphism on conjugacy classes, represented by cyclically
reduced words in a free group. The lengths of images of cyclically reduced words
have a limiting exponential growth rate.  There is a well-understood situation when train track maps can
have conjugacy classes that are fixed under an automorphism: for instance,
any automorphism of the free group on $\{a,b\}$ fixes the conjugacy class
$[a,b]$.  Apart from these, the lengths of the sequence of images of any conjugacy classes under iterates of
a train track map have exponent of growth equal to the topological entropy of the train track map, as measured in any generating set.

Peter Brinkmann wrote a very handy java application {\tt Xtrain} that implements the Bestvina-Handel algorithm,
 {\tt http://math.sci.ccny.cuny.edu/pages?name=XTrain}. I used this 
program extensively to work out and check examples for the
next theorem, which is the second main goal of this paper:

\begin{theorem}\label{Main2} A positive real number $h$ is the topological
entropy for an ergodic train track representative of an outer automorphism of a free group if and only if its expansion constant $\exp(h)$ is an algebraic integer that is at least as large as the absolute value of any conjugate of $\exp(h)$.
\end{theorem}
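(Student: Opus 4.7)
My plan is to prove the two directions separately, using the transition matrix of a train track map as the bridge between topological entropy and the weak-Perron condition, mirroring the interval strategy of Theorem \ref{Main1}.

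\emph{Necessity.} Suppose $f : \Gamma \to \Gamma$ is an ergodic train track representative of an outer automorphism, and let $T$ denote its transition matrix in the edge basis of $\Gamma$, so that $T_{ij}$ counts how many times $f(e_i)$ crosses $e_j$. Then $T$ is a non-negative integer matrix, and ergodicity of $f$ translates directly into ergodicity of $T$. Because $f$ is a train track no cancellations occur when iterating $f$ on an edge, so $\Var(f^n)$ grows like $\lVert T^n\rVert$; combined with the Alsed{\'a}-Llibre-Misiurewicz formula this yields $\exp(h(f)) = \lambda(T)$, the Perron eigenvalue of $T$. Since $T$ has integer entries its characteristic polynomial is integral, and as the minimal polynomial of $\lambda(T)$ divides it, all Galois conjugates of $\lambda$ are eigenvalues of $T$. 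The Perron-Frobenius theorem for ergodic non-negative matrices then gives $\lambda \ge |\mu|$ for each such eigenvalue $\mu$, showing that $\exp(h(f))$ is a weak Perron number.

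\emph{Sufficiency.} Let $\lambda > 1$ be weak Perron. The strategy has two stages. First, produce an $n \times n$ ergodic non-negative integer matrix $T$ with Perron eigenvalue $\lambda$: for strict Perron $\lambda$ this is Theorem \ref{ConversePerron}, and for general weak Perron one combines Lind's construction with a block-cyclic structure to accommodate the finitely many Galois conjugates of $\lambda$ on the circle $|z|=\lambda$. Second, realize $T$ as the transition matrix of a genuine train track self-homotopy-equivalence $f$ of a finite graph $\Gamma$. The natural model is the rose $R_n$ with edges $e_1,\dots,e_n$, where $f(e_i)$ is a word in the $e_j^{\pm 1}$ in which $e_j^{\pm 1}$ together appear $T_{ij}$ times. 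Two independent constraints must be met: (i) the signed transition matrix $A_{ij}=\sum_k \epsilon^{ij}_k$ obtained by assigning a sign $\epsilon^{ij}_k\in\{\pm 1\}$ to each crossing must lie in $\GL_n(\Z)$, so that $f_*$ is an isomorphism on $\pi_1(R_n)$; and (ii) at the vertex of $R_n$, no pair of directions that ever appear consecutively in an iterated image $f^k(e_i)$ may be identified by iteration of the derivative $Df$, which is the train track no-folding condition.

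The main obstacle is meeting (i) and (ii) simultaneously. Even for $\lambda=2$, the $2\times 2$ all-ones matrix (which is ergodic with Perron $2$) admits no $\pm$-sign pattern giving determinant $\pm 1$, so a direct realization on $R_2$ is ruled out. I would address (i) by replacing $T$ with a larger ergodic non-negative integer matrix having the same Perron eigenvalue (for example by direct-summing with a unimodular block whose spectrum avoids $\lambda$, or by a companion-matrix padding), enlarging the graph until the pool of sign choices is rich enough to land in $\GL_n(\Z)$. For (ii), whenever a forced gate identification between adjacent edges in some $f^k(e_i)$ would occur, I would subdivide the offending edge of $\Gamma$, splitting its endpoint so that the conflicting gates separate; this preserves the Perron eigenvalue and the homotopy class of $f$, and a finite sequence of such modifications (Bestvina-Handel's folding process in reverse) turns the candidate map into a bona fide train track map. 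Ergodicity of $T$ then makes this an irreducible outer automorphism with topological entropy $\log \lambda$, completing the proof.
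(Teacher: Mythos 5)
Your necessity argument is essentially the same as the paper's (and as the discussion following Theorem \ref{PLEntropy}): a train track map has a non-negative integer transition matrix, entropy equals the log of its Perron root, integrality of the characteristic polynomial plus Perron--Frobenius gives the weak Perron condition. That part is fine.

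The sufficiency direction has a genuine gap, and you have actually put your finger on exactly the hard point without resolving it. You correctly observe that there are two simultaneous constraints --- the signed transition matrix must lie in $\GL_n(\Z)$ so $f_*$ is an automorphism, and the map must satisfy the train track no-folding condition --- and you correctly note that even $\lambda=2$ already defeats the naive realization on $R_2$. But the two remedies you offer do not work as stated. Direct-summing $T$ with a unimodular block of small spectrum destroys ergodicity (the resulting matrix is block-diagonal, hence reducible), which contradicts the theorem's hypothesis and is exactly what you need to avoid; the vaguer ``companion-matrix padding'' is not developed enough to assess. Similarly, running Bestvina--Handel folding in reverse to separate gates is not a matrix-preserving operation: subdividing an edge changes the transition matrix, and there is no reason offered why a finite sequence of such moves terminates with both the Perron eigenvalue $\lambda$ and the homotopy equivalence property intact. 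In short, the proposal identifies the correct bottleneck but leaves it unresolved.

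The paper solves this by a completely different and much more concrete architecture, which never tries to realize an arbitrary ergodic non-negative matrix as a train track map. Instead it (a) uses the semigroup picture behind Lind's theorem to build an explicit $\lambda$-uniformly-expanding self-map of an \emph{asterisk} (a cone on points) with specially controlled first and second entries of each image edge-path (Theorem \ref{AsteriskMaps}); (b) introduces a fixed prototype train track on a 7-edge graph carrying a family of train track self-homotopy-equivalences $\phi_{2m+1}$ of expansion factor $2m+1$ (Section \ref{sec:TrainTracks}); and (c) defines a ``splitting'' operation $\Split_7$ that replaces each edge of the asterisk by seven parallel copies and lifts the asterisk map edge-by-edge via the $\phi_{2m+1}$, with the prototypes' structure guaranteeing both the train track condition and, via a staged surjectivity argument (Proposition \ref{LiftingTreeMaps}), that the lift is a homotopy equivalence. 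Mixing of the $\phi_{2m+1}$ for $m>1$ plus mixing of the asterisk map gives mixing of $\Split_7(f_\lambda)$. The weak Perron case is handled by a cyclic block construction on asterisks before applying $\Split_7$. The crucial point your proposal is missing is precisely this device for producing the $\GL_n(\Z)$ and no-folding properties simultaneously and uniformly in $\lambda$ --- the $\Split_7$ construction with its odd-expansion prototypes --- rather than patching a candidate map after the fact.
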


\emph{Note}: Even though automorphisms are invertible, the expansion constant
need not be an algebraic unit.

The relationship between the expansion constant of an automorphism and the expansion constant of its inverse is mysterious, but there is one special case
where it's possible to control the expansion constant for both an automorphism $\phi$
and its inverse $\phi^{-1}$.   An automorphism $\phi$ is \emph{positive}
with respect to a set $G$ of free generators if $\phi$ of any generator is a
positive word in the generators, that is, it preserves the semigroup they generate. This implies that $\phi$ is a train track map of the bouquet of circles defined by $G$. 

\begin{defn}
A linear transformation $A$ is \emph{bipositive} with respect to a basis $B$ if
$B$ can be expressed as the disjoint union $B = P \cup N$ such that 
$A$ is non-negative with respect to $B$, and its inverse is non-negative with respect to the basis $P \cup -N$.
An automorphism $\phi$ is \emph{bipositive} if it is positive
with respect to a set $G$ of free generators, and its inverse is positive with respect to a set of generators obtained by replacing some subset of elements of $G$ by their inverses. 
\end{defn}
\noindent{\emph{Example.}} Let
\[
A=\left(\begin{array}{cc}  1   & 1 \\  1  &2  \end{array}\right),\quad N=\left\{\left(\begin{array}{c}  1   \\  0 \end{array}\right)\right\},\quad\text{and}\quad P=\left\{\left(\begin{array}{c}  0   \\  1 \end{array}\right)\right\}.
\]
The matrix $A$ is bipositive with respect to $B=P\cup N$. 

\medskip

At one point, I hoped that the criteria in the following theorem would 
characterize all pairs of expansion constants for a free group automorphism and its inverse. 
This turned out to be false (Theorem \ref{Main2}), but
the characterization of such pairs in this special case is still interesting:

\begin{theorem}\label{Main3} A pair $(\lambda_1, \lambda_2)$ of positive real numbers is the pair of
expansion constants for $\phi$ and $\phi^{-1}$, where $\phi$ is bipositive, if and only if it is the pair of positive eigenvalues for a bipositive element of $GL(n,\Z)$ for some $n$,
if and only if $\lambda_1 $ and $\lambda_2 $
are real algebraic units such that the Galois conjugates of $\lambda_1$ and $\lambda_2^{-1}$ are contained in the closed annulus
$\lambda_2^{-1} \le |z| \le \lambda_1$.
\end{theorem}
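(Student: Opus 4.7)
The plan is to prove the three-way equivalence as a triangle, linking the automorphism side to the matrix side by abelianization, the matrix side to the Galois annulus by Perron--Frobenius, and closing the triangle with a simultaneous two-Perron Lind-type lattice construction.

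For \emph{bipositive automorphism $\Rightarrow$ bipositive matrix}, let $A$ be the abelianization of $\phi$ expressed in the basis $G$; positivity of $\phi$ with respect to $G$ gives $A\ge 0$, and positivity of $\phi^{-1}$ with respect to $G'=P\cup(-N)$ gives $DA^{-1}D^{-1}\ge 0$, where $D$ is the diagonal sign matrix with $+1$ on $P$ and $-1$ on $N$; hence $A$ is bipositive. Because $\phi$ and $\phi^{-1}$ are positive, their train track transition matrices on the bouquets defined by $G$ and $G'$ equal $A$ and $DA^{-1}D^{-1}$, and the Alsed\'{a}--Llibre--Misiurewicz graph entropy formula cited before Theorem~\ref{Main2} identifies the expansion constants with the Perron eigenvalues of these matrices; conjugation by $D$ preserves eigenvalues, so $\lambda_1$ is the Perron eigenvalue of $A$ and $\lambda_2$ of $A^{-1}$. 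For \emph{bipositive matrix $\Rightarrow$ Galois annulus}, Perron--Frobenius applied to $A$ and to $DA^{-1}D^{-1}$ places every eigenvalue of $A$ in $\lambda_2^{-1}\le|z|\le\lambda_1$; the minimal polynomials of $\lambda_1$ and of $\lambda_2^{-1}$ divide the integer characteristic polynomial $\det(xI-A)$, whose constant term is $\pm\det A=\pm 1$, so all Galois conjugates of $\lambda_1$ and of $\lambda_2^{-1}$ are eigenvalues of $A$ and both $\lambda_1,\lambda_2$ are algebraic units.

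The main content is the converse construction: given $\lambda_1,\lambda_2$ satisfying the annulus condition, produce a bipositive $A\in GL(n,\Z)$ having $\lambda_1$ as Perron eigenvalue and $\lambda_2^{-1}$ as prescribed smallest-modulus eigenvalue. I would adapt the proof of Lind's Theorem~\ref{ConversePerron} to handle two Perron behaviors simultaneously. Work in $V=\Q(\lambda_1)\oplus\Q(\lambda_2)$ with the $\Q$-linear operator $\alpha$ acting by multiplication by $\lambda_1$ on the first summand and by $\lambda_2^{-1}$ on the second; the eigenvalues of $\alpha$ are exactly the Galois conjugates of $\lambda_1$ and of $\lambda_2^{-1}$, all lying in the closed annulus. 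Fix strictly positive eigenvectors $v_1$ of $\alpha$ (for $\lambda_1$) and $v_2$ of $\alpha^{-1}$ (for $\lambda_2$), and choose a full-rank lattice $L\subset V$ with a basis $\{b_1,\dots,b_n\}$ such that each $b_i$ lies in a narrow cone around $v_1$ (forcing $\alpha b_i=\sum_j a_{ji}b_j$ with $a_{ji}\ge 0$) and simultaneously some sign-flip $\epsilon_i b_i$ with $\epsilon_i\in\{\pm 1\}$ lies in a narrow cone around $v_2$ (forcing non-negativity of $\alpha^{-1}$ in the sign-flipped basis). Setting $P=\{i:\epsilon_i=+1\}$ and $N=\{i:\epsilon_i=-1\}$, the matrix of $\alpha$ in the $b_i$-basis is the desired bipositive $A\in GL(n,\Z)$. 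The principal obstacle is producing a single basis that simultaneously realizes both cone conditions; the annulus hypothesis is precisely what forces the two invariant cones to be large enough to admit a common lattice basis, likely at the cost of taking $n$ substantially larger than $\deg\lambda_1+\deg\lambda_2$.

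Finally, to lift the bipositive $A\in GL(n,\Z)$ to a bipositive free group automorphism $\phi$ of $F_n$, decompose $A$ as a product of \emph{bipositive} elementary transvections $I+ke_{ij}$ (with $k>0$ and with $i,j$ in opposite parts of the partition --- exactly the elementary matrices whose inverses become non-negative under conjugation by $D$) together with signed permutations preserving the partition. Each such elementary matrix lifts to the Nielsen automorphism $x_j\mapsto x_jx_i^k$, whose inverse $x_j\mapsto x_jx_i^{-k}$ is positive in the sign-flipped generator $x_i^{-1}$; composing gives a bipositive $\phi$ with abelianization $A$, and its train track expansion constants are $\lambda_1,\lambda_2$ by the entropy identification already used. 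The subsidiary obstacle here is verifying that every bipositive matrix in $GL(n,\Z)$ admits such a decomposition; this should follow from a Gauss-style reduction specialized to bipositive matrices, or failing that, by a direct construction of the positive words $\phi(g_i)$ and $\phi^{-1}(g'_i)$ realizing the prescribed abelianizations.
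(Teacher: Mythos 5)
Your two easy implications (abelianization gives a bipositive integer matrix; Perron--Frobenius plus unimodularity gives the annulus condition) are fine. However, the two constructive steps both have genuine problems.

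In the lattice construction you want each basis vector $b_i$ in a narrow cone around the Perron eigenvector $v_1$ of $\alpha$ \emph{and} some sign-flip $\epsilon_i b_i$ in a narrow cone around the Perron eigenvector $v_2$ of $\alpha^{-1}$. Since $v_1$ and $v_2$ span a two-plane in which $v_1\neq\pm v_2$, these two conditions are incompatible: a vector near $\pm v_1$ is nowhere near $\pm v_2$. The correct geometric picture (as in the paper's proof of Theorem~\ref{bipositive matrices}) uses half-plane conditions in the $ST$-plane: all $p(b_i)$ lie in the right half-plane $S>0$ (distributed over the first and fourth quadrants), and the sign-flip $\epsilon_i$ is chosen to put $p(\epsilon_i b_i)$ in the upper half-plane $T>0$; both conditions can hold simultaneously precisely because quadrants one and four on the $S$-side become quadrants one and two on the $T$-side after flipping $Q$. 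Moreover ``being near the eigenvector'' alone does not force $\alpha$ to be non-negative in the basis $\{b_i\}$: non-negativity is a lattice condition, not a cone condition, and the paper makes it work only via Lemma~\ref{semigroup most of cone} (a radius-$R$ ball inside the cone forces membership in the semigroup), together with a carefully built higher-dimensional free abelian group $FA$ with orbit-chain generators. Your sketch skips exactly the content that makes the eigenvalues come out right while keeping all entries non-negative integers and the matrix invertible.

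The lift to a free group automorphism is also problematic as phrased. You propose that every bipositive matrix in $\GL(n,\Z)$ factors as a product of bipositive elementary transvections and permutations; but the paper's own remark in Section~\ref{sec:BipositiveMatrices} shows that already for $n=3$ the positive elementary semigroup is a proper subsemigroup of the positive semigroup of $\GL(n,\Z)$ (no finite set of positive elementary images of the basis triangle covers all positive images), so a Gauss-style reduction cannot succeed for an arbitrary bipositive matrix. What would work is to note that the \emph{specific} bipositive matrix produced by the construction in Theorem~\ref{bipositive matrices} (an upper triangular unipotent matrix with cross-type entries followed by a permutation preserving $\{P,Q\}$) is already elementary bipositive --- this is the content of Theorem~\ref{elementary bipositive expansions} --- and then lift that one; but that is not what you wrote. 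The paper's actual route to the automorphism is different in kind: it realizes the bipositive linear map via a doubletrack structure on a one-vertex graph (two transverse train track structures $\sigma,\tau$ given by monotonicity of the lift's projection to the $S$- and $T$-axes) and a bizipping between them, which directly produces $\phi$ and $\phi^{-1}$ as train track maps for $\sigma$ and $\tau$ without any factorization into Nielsen moves. That approach sidesteps the elementary-semigroup obstruction entirely.
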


Theorem \ref{Main3} does not extend in an immediate way to the general case. Classification of the set of pairs of expansion constants that can occur in general remains mysterious. As already noted,
these expansion constants need not be units.  Moreover,
there are examples of train track maps where the Galois conjugates
of  $\lambda_1$ and $\lambda_2^{-1}$ are *not* contained in the annulus
$\lambda_2^{-1} \le |z| \le \lambda_1$.  It is consistent with what I currently
know that \emph{every} pair of weak Perron numbers greater than 1 is the pair of expansion constants for $\phi$ and $\phi^{-1}$. The proof of theorem \ref{Main3} will be sketched in section \ref{sec:Doubletracks}.

\section{Special case: Pisot numbers}\label{sec:Pisot}

The special case that $\lambda$ is a Pisot number has a particularly easy theory, so we will look at that first. 

It is easy to see that the topological entropy of a map $f: I \rightarrow I$
with $d-1$ topological critical points can be at most $\log(d)$: each point has at most $d$ preimages,
so the total variation of $f^n$ is at most $d^n$.

\begin{theorem}
For any integer $d > 1$ and any Pisot number $\lambda \le d$,  there is a postcritically finite map $f: I \rightarrow I$ of degree $d$ (that is, having $d-1$ critical points) with entropy $\log(\lambda)$. 

In fact, when $\lambda$ is Pisot, every $\lambda$-uniformly expanding map whose critical points are in $\Q(\lambda)$ is postcritically finite.
\end{theorem}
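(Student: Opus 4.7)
The plan is to reduce the first sentence of the theorem to the second, then to establish the second by a Minkowski-embedding argument that crucially uses the contractivity of Pisot conjugates. For the first sentence, fix $d>1$ and a Pisot number $\lambda\in(1,d]$, and consider the piecewise linear zigzag $f\colon[0,1]\to[0,1]$ with breakpoints at $i/d$ for $0\le i\le d$, satisfying $f(i/d)=\lambda/d$ when $i$ is odd and $f(i/d)=0$ when $i$ is even. Then $|f'|=\lambda$ wherever defined, so $f$ is a $\lambda$-uniform expander of degree $d$; its image lies in $[0,\lambda/d]\subseteq[0,1]$ since $\lambda\le d$, and its critical points $1/d,\ldots,(d-1)/d$ are rational, hence in $\Q(\lambda)$. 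The chain rule gives $|(f^n)'|\equiv\lambda^n$, so $\Var(f^n)=\lambda^n$, and Theorem~\ref{Misiurewicz-Szlenk} yields $h(f)=\log\lambda$. Postcritical finiteness will then follow from the second sentence.

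To prove the second sentence, let $f$ be any $\lambda$-uniform expander with breakpoints $a_0<\cdots<a_k$ in $\Q(\lambda)$. On $[a_{i-1},a_i]$ write $f(x)=\epsilon_i\lambda x+c_i$ with $\epsilon_i\in\{\pm 1\}$; solving at the endpoints gives each $c_i\in\Q(\lambda)$. Pick $N\in\Z_{>0}$ such that every $a_i$, $f(a_i)$, and $c_i$ lies in $(1/N)\Z[\lambda]$. Since $\lambda$ is an algebraic integer, $\lambda\cdot(1/N)\Z[\lambda]\subseteq(1/N)\Z[\lambda]$, and the recursion $x_{m+1}=\epsilon_{j_m}\lambda x_m+c_{j_m}$ (where $j_m$ is the index of the piece containing $x_m$) keeps the forward orbit of every critical point inside $(1/N)\Z[\lambda]\cap[0,1]$.

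Let $\lambda=\lambda_1,\lambda_2,\ldots,\lambda_n$ be the Galois conjugates of $\lambda$ and $\sigma_j$ the corresponding embeddings $\Q(\lambda)\hookrightarrow\C$; by the Pisot hypothesis, $|\lambda_j|<1$ for $j\ge 2$. The Minkowski map $\Phi\colon\Q(\lambda)\to\C^n$, $\Phi(x)=(\sigma_1(x),\ldots,\sigma_n(x))$, sends $(1/N)\Z[\lambda]$ to a rank-$n$ lattice in the natural real form of $\C^n$. Applying $\sigma_j$ to the orbit recursion gives $\sigma_j(x_{m+1})=\epsilon_{j_m}\lambda_j\sigma_j(x_m)+\sigma_j(c_{j_m})$; because $|\lambda_j|<1$ and the finitely many $|\sigma_j(c_i)|$ admit a common bound $M_j$, the sequence $(\sigma_j(x_m))_{m\ge 0}$ stays bounded by $|\sigma_j(x_0)|+M_j/(1-|\lambda_j|)$, while the $\sigma_1=\mathrm{id}$ coordinate remains in $[0,1]$. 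Hence $\Phi$ sends each forward orbit into a bounded region of $\C^n$; this region meets the lattice in only finitely many points, and taking the union over the finitely many critical points produces a finite postcritical set.

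The heart of the argument is the bounded contraction in the conjugate coordinates: one really needs $|\lambda_j|<1$ so that each $\sigma_j$-orbit is an exponentially contracting recursion perturbed by a bounded inhomogeneity. Without the Pisot assumption this contraction is lost and the conjugate orbits can escape to infinity, so the Minkowski-lattice trick fails outright. This is why realizing a general (non-Pisot) Perron number as the entropy of a postcritically finite map must wait for the substantially more delicate constructions in the later sections.
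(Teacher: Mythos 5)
Your proof is correct and follows essentially the same approach as the paper: construct an explicit $\lambda$-uniform expander of degree $d$ with critical data in $\Q(\lambda)$, then show postcritical finiteness by observing that the postcritical orbit lies in a fixed lattice $\tfrac1N\Z[\lambda]$, and that under each non-identity embedding $\sigma_j$ the conjugate maps $x\mapsto\pm\lambda_j x+\sigma_j(c_i)$ are uniform contractions (this is exactly where Pisot is used), so boundedness in every coordinate of the Minkowski embedding forces the orbit to hit only finitely many lattice points. One caveat, which is shared with the paper's own phrasing: the step ``solving at the endpoints gives each $c_i\in\Q(\lambda)$'' uses implicitly that some value such as $f(a_0)$ already lies in $\Q(\lambda)$; the hypothesis that the critical points lie in $\Q(\lambda)$ constrains only the differences $c_{i+1}-c_i=\pm 2\lambda a_i$, leaving one additive degree of freedom, so to make the second sentence literally correct one should take ``critical points'' to include the full defining data of the map (equivalently, assume the critical values, or $f(0)$, also lie in $\Q(\lambda)$).
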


\begin{figure}
\centering
\includegraphics[width=3.5in]{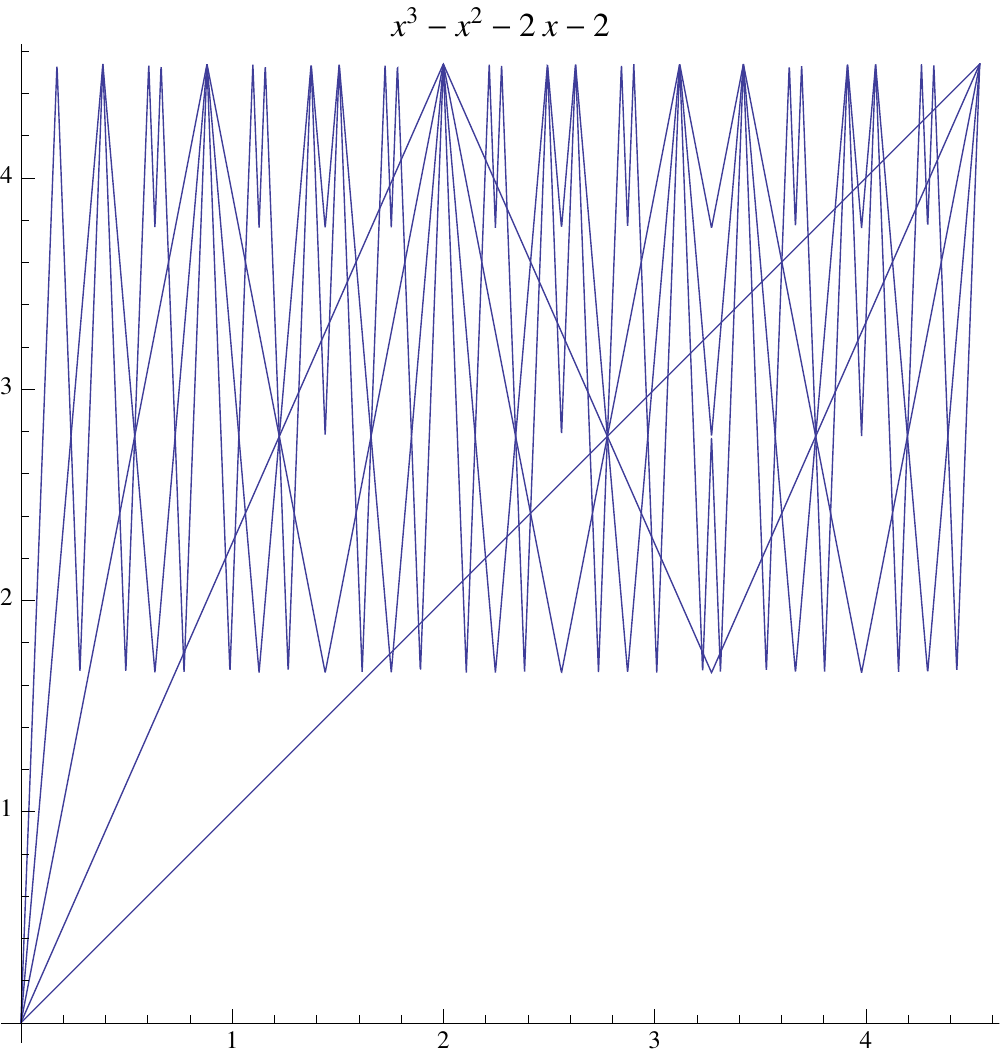}
\caption{This is the graph of the first 4 iterates of the function described in
the proof, for the Pisot number satisfying $x^3-x^2-2x-2 = 0$.  The Pisot root is $\lambda = 2.2695308$, and the other roots are $-0.63476542+0.69160123i$ and its conjugate, of modulus $0.938743$. The first critical point, 2, maps to the endpoint $2\lambda$, which is fixed. The other critical point, $1+\lambda$, maps to a fixed point on the third iterate.}
\end{figure}

\begin{proof}
One way to construct pure $\lambda$-expanders is to create their graphs by folding.  Start with the graph of the linear function $x \mapsto \lambda x$
on the unit interval. Now reflect the portion of the graph above the line $y=1$
through that line.  Reflect the portion of the new function that is below the
line $y=0$ through that line.  Continue, until the entire graph is folded into
the strip $0 \le y \le 1$.

This results in a function that may have fewer than $d-1$ critical points,
so repeatedly reflect segments of the graph through horizontal segments $y=\alpha$ at
heights $\alpha \in \Q(\lambda)$ until the function has
the desired number $d-1$ of critical points.  

For convenience, rescale by some integer $n$ to clear all denominators, so that all critical points are algebraic integers.
  Therefore, the postcritical orbits are contained in
$\Z[\lambda]$.

\begin{figure}[htbp]
\centering
\includegraphics[width=5in]{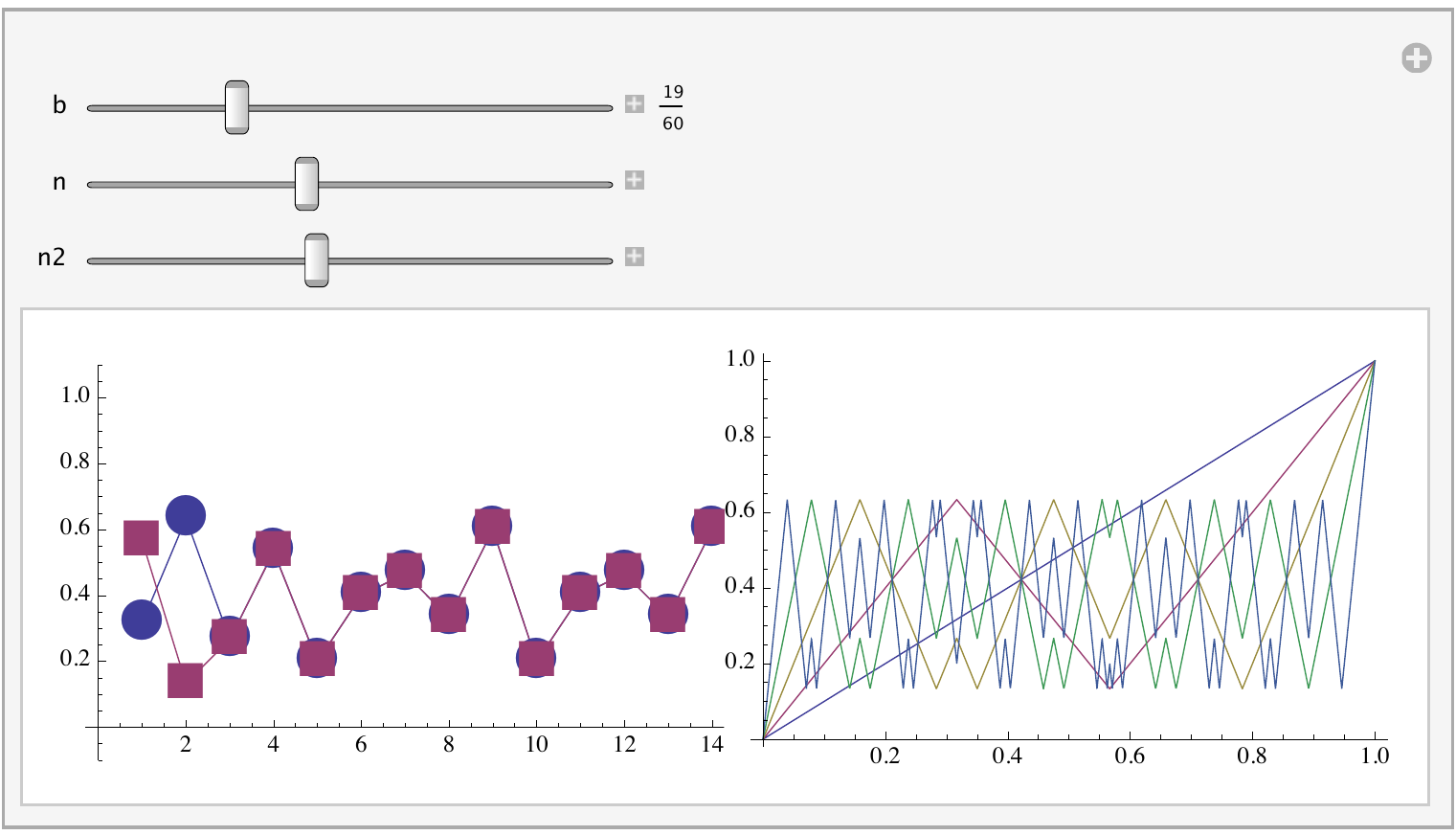}
\caption{The diagram at left shows the postcritical orbits for the Pisot construction where $\lambda= 2$, $d = 3$, with the critical points
chosen as $19/60$ and $17/30$. On the right is the plot of the first four iterates of this piecewise linear function of entropy $\log(2)$.}
\label{fig:lambda2d3}
\end{figure}

For each Galois conjugate $\lambda^\alpha$ of $\lambda$, there is an embedding of $\Z[\lambda]$ in $\C$.  In each such embedding, the postcritical orbits remain bounded:  the action of $f_{\lambda^\alpha}$ on any point is some composition of functions of the form $f_{\lambda^\alpha,i}(x) = \pm \lambda^\alpha(x) + a_i$, which act as contractions, so there is a compact subset $K \subset \C$ that the Galois conjugates of every piece, $f_{\lambda^\alpha,i}$  takes inside itself.

By construction, the orbit of the critical points under $f_\lambda$ is also bounded, since $f_\lambda$ is a map of an interval to itself.

For any fixed bound $B$, there are only a finite number of algebraic integers of $\Q(\lambda)$ satisfying the bound $\abs {\lambda^\alpha} < B$, since in the
embedding into the product  of real embeddings and a selection of one from each pair of complex conjugate embeddings, the algebraic integers in
$\Q(\lambda)$ form a lattice. The postcritical set is contained in such a set, so it is finite. 
\end{proof}

This phenomenon is closely related to why decimal representations of rational numbers are eventually periodic. There is a theory of $\beta$-expansions, similar to decimal expansions but with
$\beta$ a real number; when $\beta$ is Pisot, the digits of the $\beta$ expansion of any element of $\Q(\lambda)$ are eventually periodic (by an almost
identical proof). This kind of argument appears in \cite{bertrand}, \cite{gelfond}, and \cite{schmidt}. 
\medskip

The details of construction of $f_\lambda$ above are not important. \emph{Any} $\lambda$-expander
whose critical points are in $\Q(\lambda)$ will do.  As long as $\lambda \ne d > 2$, there are infinitely many.  To make the proof work as phrased, rescale the unit interval to clear all denominators, so that all critical points become algebraic integers in $\Q(\lambda)$.

Even in the case that $\lambda$ is an integer less than $d$, this gives countably many different examples provided $d > 2$.  For instance, figure \ref{fig:lambda2d3} shows the critical point orbits when $\lambda=2$, $d=3$ and the critical points are chosen
as $19/60$ and $17/30$. On the fourth iterate, they settle into a single
periodic orbit of period 4.  There is a unique cubic polynomial, up to affine conjugacy, having the same order structure for the postcritical orbits, with entropy
$\log(2)$.  

In general, there is a non-empty convex $d-2$-dimensional space of $\lambda$-uniform-expanders for every $1 < \lambda < d$. If $\lambda$ is Pisot,
then postcritically finite examples are dense in this set. 

There are many Pisot numbers: for any real algebraic number $\alpha$, it is easy to see that there are
infinitely many Pisot numbers in $\Q(\alpha)$:  the intersection of the lattice of algebraic integers with a cylinder centered around any line corresponding to an embedding of $\Q(\alpha)$ in $\R$ consists of Pisot numbers, except for those in a closed ball containing the origin.
However, in the geometric sense, Pisot numbers are rare:  in \cite{salem}, Salem proved that the set of Pisot numbers is a countable closed subset of $\R$, making use of a theorem of Pisot that a real number $x$ is Pisot if and only if sequence of minimum differences of $x^n$ from the nearest integer is square-summable. The golden ratio is the smallest accumulation point of Pisot numbers, and the \emph{plastic number} $1.3247 \dots$,
 a root of $x^3 - x - 1$ is the smallest Pisot number. 
 
 It is elementary and well-known that postcritically finite maps are dense among uniform expanders, but the construction above raises a question 
 that does not seem obvious for $d > 2$:
 
 \begin{question}
 For fixed $d$, is there a dense set of numbers $1 < \lambda < d$ for which the set of postcritically finite maps is dense among $\lambda$-uniform expanders?
 For which $\lambda$ are there infinitely many non-affinely equivalent postcritically finite maps? For which $\lambda$ are postcritically finite maps dense?
 \end{question}
 
One way to get infinite families of postcritically finite maps with the same $\lambda$ is to take dynamical extensions of maps with fewer critical points
(\emph{c.f. section \ref{sec:DynamicalExtensions}}), taking care only to introduce new critical points that map to positions whose forward orbit is finite.  But this construction cannot work when $\lambda > d-1$.

\begin{figure}[htpb]
\centering
\includegraphics[width=5in]{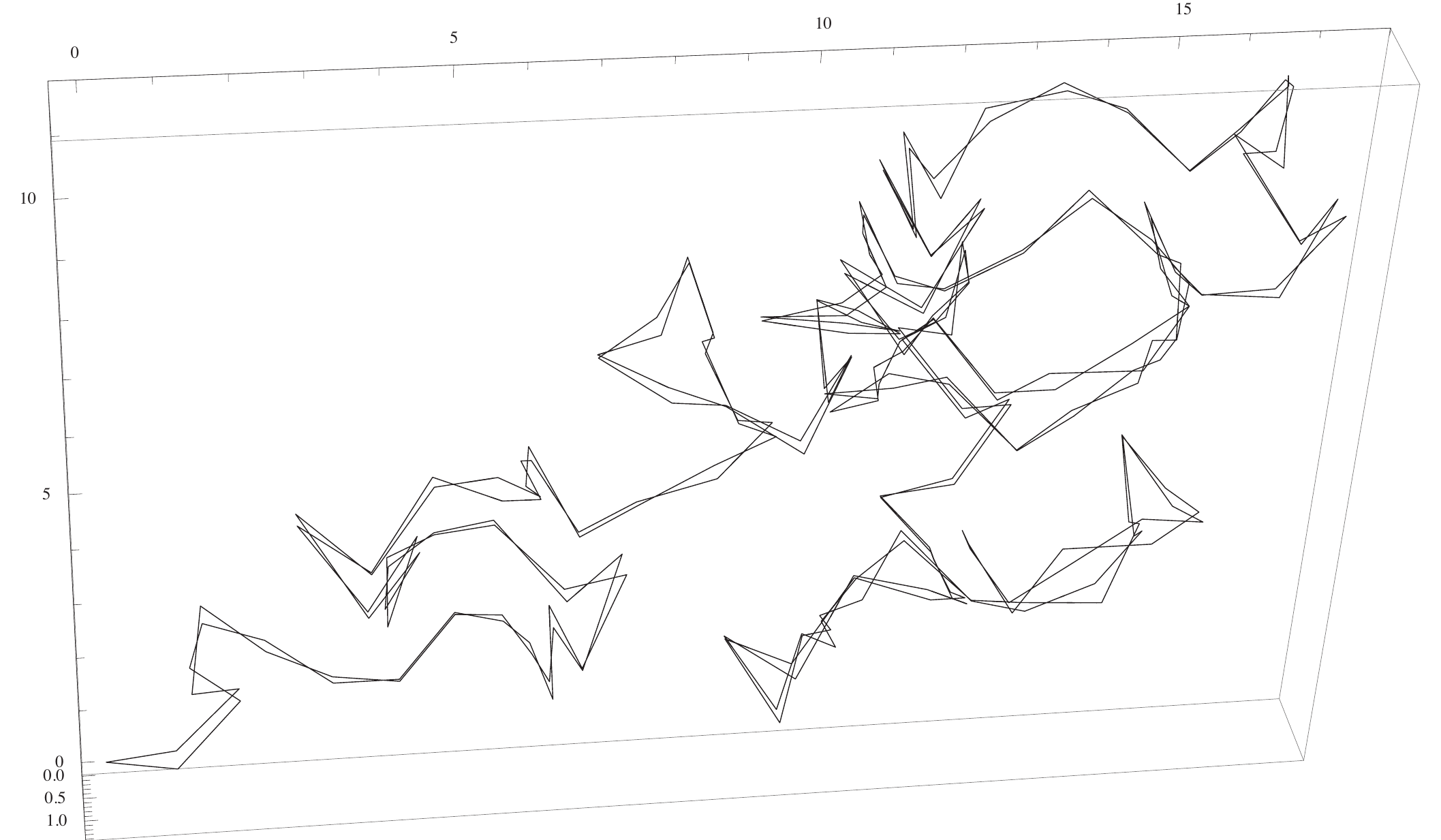}
\caption{The tent map $x \mapsto \lambda \abs x - 1$ for the degree 6 Salem number $\lambda = 1.4012683679\dots$ that satisfies $\lambda^6-\lambda^4-\lambda^3-\lambda^2+1 = 0$ is postcritically finite, with the critical point having period 270, quite large compared to the degree of $\lambda$.  This figure shows the absolute value of the projection
to the $\lambda$-line (the thin direction) as well as to the two complex places of $\lambda$.  The trajectory resembles a random walk in the plane. Since random walks in $\E^2$ are recurrent (they have probability 1 of visiting any set of positive measure infinitely often), one would expect it to eventually return. It does, but as random walks often do (the mean return time is $\infty$), it takes a long time.
}
\label{fig:SalemPeriod270}
\end{figure}

Salem numbers are closely related to Pisot numbers: some people conjecture that the union of Salem numbers and Pisot numbers is a closed subset of $\R$. However, the construction that worked for Pisot numbers is inadequate
for Salem numbers. The Galois conjugates of the
linear pieces of $f_\lambda$ are isometries of $\C$ for any conjugate on the unit circle. Let $n$ be the degree of the Salem number, so for each linear piece of $f_\lambda$ there are $n/2-1$ Galois conjugate complex isometries, one for each complex place.
If we take
the product of these isometries over all complex places of $\Q(\lambda)$ we get
an action by isometries on $\C^{n/2 - 1}$, where the first derivative of the action of each linear piece is $\pm U_\lambda$, where $U_\lambda$ is 
a unitary transformation.  In the unitary group, the orbit is dense on an $n/2-1$ torus, acting as an irrational rotation of the quotient of the torus by $\pm I$ (thus factoring out the complication of the variable sign of $f_\lambda)$.
The action of the sequence of linear functions on the $(n/2-1)$-tuple of moduli 
appears to behave like a random walk in $\R^{(n-2)/2}$.
Sometimes they are periodic, sometimes with fairly large periods, but Brownian motion  in dimension bigger than 2 is not recurrent, and a few experiments for $n \ge 8$ indicate that they typically drift slowly toward infinity, and
thus are not postcritically finite. However, there could be some reason (opaque to me) why they might not act randomly in the long run, and they could eventually cycle. At least it seems hard to prove  any particular example is not postcritically finite.

\begin{figure}[htbp]
\centering
\includegraphics[width=3in]{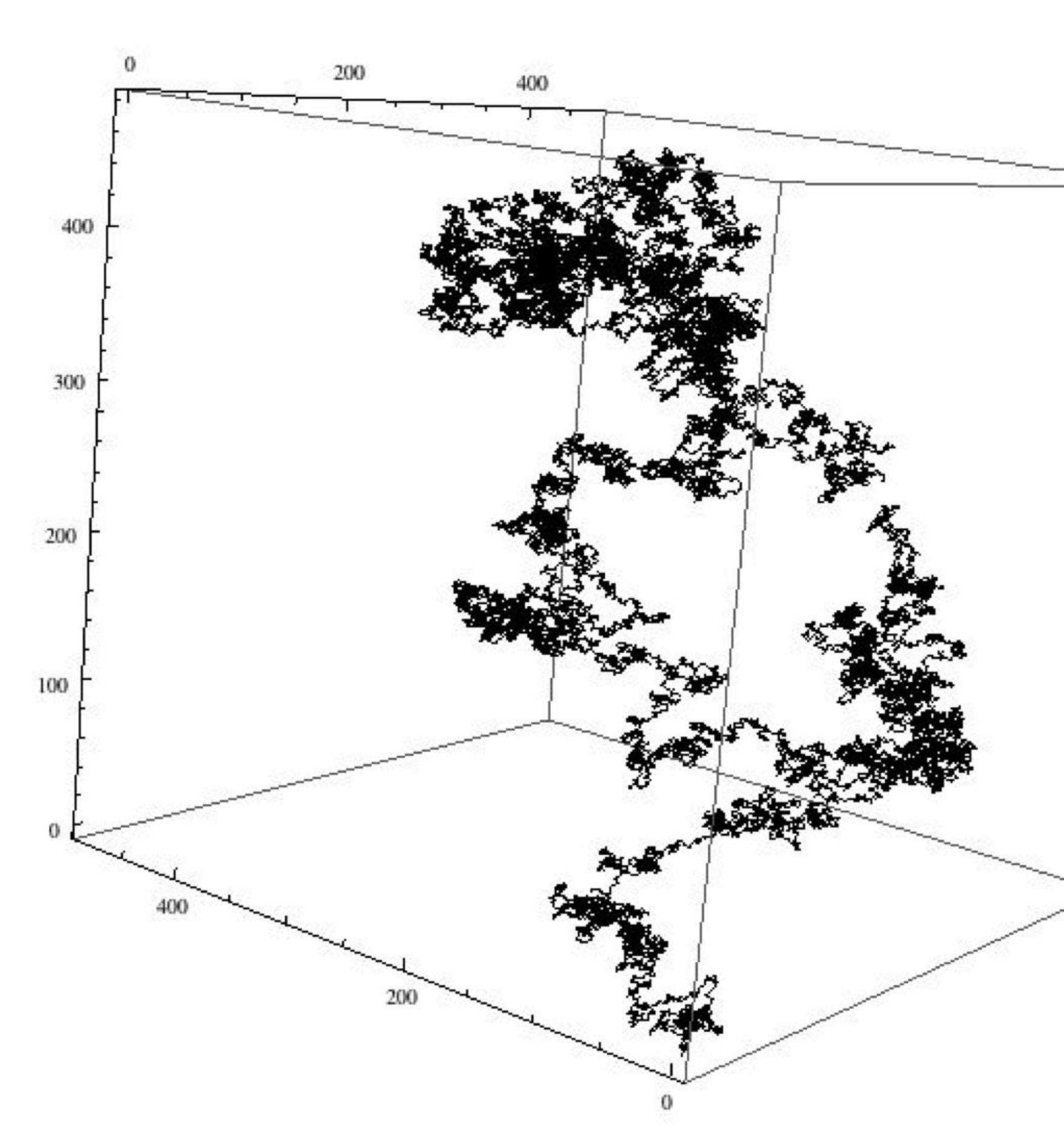}
\caption{This diagram is a 3-dimensional projection of the first 200,000 iterates of the critical point for $x \mapsto \lambda|x| - 1$ where 
$\lambda = 1.17628\dots$ is Lehmer's constant with minimal polynomial 
$1 + x - x^3 - x^4 - x^5 - x^6 - x^7 + x^9 + x^{10}$. There are 4 complex places; this shows a 3-dimensional projection for the quadruple of absolute values.  It resembles a Brownian path in 3 dimensions. The map is postcritically finite if and only if the path closes. Its values are always algebraic integers, so if it comes sufficiently close to the start it's fairly likely to close. The quadruple of radii determines a 4-torus in $\C^4 \times \R^2$, with the current lattice point a bounded distance from that torus. When the radii are on the order of 200, this bounded neighborhood has volume on the order of $(2\pi*200)^4$, with roughly  $10^{12}$ lattice points, so the chances of returning seem remote.}
\label{fig:LehmersWalk}
\end{figure}

For example, for the Salem number 1.7220838 satisfying $x^4-x^3-x^2+x-1=0$ of degree 4, the critical point of the tent map is periodic of period 5. For
the Salem number $1.401268367939\dots$ satisfying $t^6-t^4-t^3-t^2+1 = 0$, the
critical point has period 270.  Note that its square is also a Salem number, for
which the period is $135 = 270/2$.  

One of the most famous Salem numbers is the Lehmer constant, defined by the polynomial
$1 + x - x^3 - x^4 - x^5 - x^6 - x^7 + x^9 + x^{10}=0$. The single root outside the unit circle is
$1.17628\dots$. This is the smallest known Salem number,
and in fact the smallest known Mahler measure for any algebraic integer. (Mahler
measure is the product of the absolute value of all Galois conjugates outside the unit circle.)  Figure \ref{fig:LehmersWalk} is a diagram of the first 200,000 elements of the critical point in $x \mapsto \lambda |x| - 1$, projected
from $\C^4 \times \R^2$ to the quadruple of radii in the complex factors, and from there a projection into 3 dimensions. The map is postcritically finite if and only if the path closes. Its values are always algebraic integers, so if it comes sufficiently close to the start it's fairly likely to close. The quadruple of radii determines a 4-torus in $\C^4 \times \R^2$, with the current lattice point a bounded distance from that torus.   When the radii are on the order of 200, as in this case, this bounded neighborhood has volume on the order of $(2\pi*200)^4$, with roughly  $10^{12}$ lattice points, so the chances of looping appear remote unless the path wanders close to the origin, where the tori are smaller.  For comparison, the variance of a random walk with stepsize 1
in
 $\R^n$ equals the number of steps, so for a random walk of length 200,000, the standard deviation is $\sqrt{200,000} \approx 447$; projected from 8 dimensions to 3, the standard deviation would be $\approx 274$,  in line with the
picture.  An experiment with a selection of small Salem numbers of moderate degree $> 6$ showed similar results, with none of them exhibiting a cycle within $500,000$ iterates. 

This discussion is related to the ideas surrounding the $\beta$-transformation $T_\beta:[0,1]\to [0,1]$ given by $T_\beta:x\mapsto \beta x$, where $\beta>1$. The number $\beta$ is a {\em{beta number}} if $1$ has finite orbit under $T_\beta$. In \cite{schmidt}, Klaus Schmidt showed that every Pisot number is a beta number (proved independently in \cite{bertrand}). In \cite{boyd}, David Boyd proved that if $\beta$ is a Salem number of degree $4$, then it is a beta number. In \cite{boyd2}, David Boyd presents heuristic arguments based on random walks that almost every Salem number of degree $6$ should be beta. Thanks to Doug Lind for bringing this to my attention.

\section{Constructing interval maps: First Steps}\label{sec:ProofMain1a}

For a Perron number that is not Pisot, the situation is much more delicate. To
develop a strategy, first we'll discuss incidence matrices. There are two reasonable versions for an incidence matrix that are transposes
of each other. We will use the version whose columns each represent the image of a subinterval of the domain, and whose rows represent
a subinterval in the range, so that each entry counts how many times the subinterval that indexes its column crosses the subinterval that indexes its row.

Suppose the unit interval $I$ is subdivided into $n$ subintervals $J_1, \dots, J_n$ (in order).  Let $V = \left \{ 0 = v_0, v_1, \dots, v_n = 1\right \}$ be
the vertex set. Every function $g: V \rightarrow V$ that takes adjacent vertices to distinct vertices extends to a postcritically finite piecewise linear map.
In this way, a large but finite and specialized set of $n\times n$ matrices can be realized as incidence matrices.  Incidence matrices are easy to recognize. Each column consists of a consecutive sequence of $1$'s, and is otherwise $0$. There is a matching between the ends of the blocks of consecutive $1$'s in adjacent columns, with every column except the first and last having one end of the block of $1$'s matched to the left and the other end of the block of $1$'s matched to the right. 

Here is a generalization of this concept. Consider a map $f: I \to I$ with finitely many critical points such that $f(V) = V$ and $f$ of the critical set is contained in $V$, that is, $V$ contains all critical values.  Under these
conditions, an extended incidence matrix is still defined for $f$, whose entries $a_{ij}$ count how many times $f(J_j)$ crosses interval $J_i$.

\begin{proposition}\label{IncidenceMatrixCharacterization}
An $n \times n$ non-negative integer matrix $A$ is an extended incidence matrix
if and only if
\begin{enumerate}
\item{The nonzero entries in each column form a consecutive block, and}
\item{There is a map $\phi:\{0,1,\dots, n\} \rightarrow \{0,1,\dots, n\}$
such that in column $i$, the entries in rows greater than $\phi(i-1)$ and
not greater than $\phi(i)$, and no other entries, are odd.}
\end{enumerate}
\end{proposition}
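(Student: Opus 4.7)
Necessity is a direct parity calculation, carried out column by column. Since $f|_{J_j}$ is continuous, $f(J_j)\subset I$ is a connected subinterval, so the set of $J_i$'s it crosses forms a consecutive block, giving (1). Let $\phi(j)$ be the index with $f(v_j)=v_{\phi(j)}$. For generic $p\in J_i$, $a_{ij}=|(f|_{J_j})^{-1}(p)|$; dragging $p$ inside a single $J_i$ never crosses a critical value (all critical values lie in $V$), so $a_{ij}$ is well defined, and as $p$ crosses a vertex $v_l\in V$ the count changes by an even number at every interior critical value and by $\pm 1$ exactly at the two endpoint values $v_{\phi(j-1)}$ and $v_{\phi(j)}$ (where one of the boundary preimages $v_{j-1}$ or $v_j$ enters or leaves $f|_{J_j}^{-1}(p)$). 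Therefore the parity of $a_{ij}$ is $1$ precisely when $i$ lies strictly between $\phi(j-1)$ and $\phi(j)$, which is condition (2).

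For sufficiency, given $A$ and $\phi$ satisfying (1) and (2), the plan is to construct $f$ column by column and then glue. Set $f(v_j)=v_{\phi(j)}$, and for each column $j$ define
\[
\epsilon_{ij}=\begin{cases}\mathrm{sign}(\phi(j)-\phi(j-1)) & \min(\phi(j-1),\phi(j))<i\le\max(\phi(j-1),\phi(j)),\\ 0 & \text{otherwise,}\end{cases}
\]
and $u_{ij}=(a_{ij}+\epsilon_{ij})/2$, $d_{ij}=(a_{ij}-\epsilon_{ij})/2$. Condition (2) is exactly the statement that $a_{ij}$ and $\epsilon_{ij}$ have the same parity, so $u_{ij},d_{ij}$ are nonnegative integers; they are to be realized as the numbers of upward- and downward-monotone crossings of $J_i$ by $f|_{J_j}$. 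Constructing such an $f|_{J_j}$ amounts to finding a directed Eulerian trail from $\phi(j-1)$ to $\phi(j)$ in the multigraph on $\{0,1,\dots,n\}$ with $u_{ij}$ copies of the edge $(i-1)\to i$ and $d_{ij}$ copies of $i\to(i-1)$.

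The Eulerian hypotheses are verified by a telescoping computation: at each vertex $v_l$,
\[
\mathrm{out}(v_l)-\mathrm{in}(v_l)=(u_{l+1,j}-d_{l+1,j})-(u_{lj}-d_{lj})=\epsilon_{l+1,j}-\epsilon_{lj},
\]
which vanishes except at $v_{\phi(j-1)}$ (excess $+1$) and $v_{\phi(j)}$ (excess $-1$). Condition (1) confines the edges of positive multiplicity to one consecutive block of vertices containing both $\phi(j-1)$ and $\phi(j)$, so the underlying graph is connected. The classical Eulerian trail theorem then produces the required walk, and subdividing $J_j$ into equal pieces indexed by its successive steps and mapping each linearly onto the corresponding $J_i$ yields a piecewise linear $f|_{J_j}$ whose turning points map into $V$; the columns glue continuously at each $v_j$ because both sides agree with $v_{\phi(j)}$. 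The main technical nuisance is the bookkeeping in degenerate cases — when $\phi(j-1)=\phi(j)$ (all $\epsilon_{ij}=0$, so a closed Eulerian circuit based at that vertex is required) and when the support of column $j$ extends strictly beyond the interval $[\min(\phi(j-1),\phi(j))+1,\max(\phi(j-1),\phi(j))]$ — but in each such case conditions (1) and (2) together are exactly the degree-balance and connectedness statements needed, so no further hypothesis is involved.
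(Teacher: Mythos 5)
Your argument is correct and, for sufficiency, takes the Eulerian-trail route that the paper mentions only as a note after its proof (where it is misnamed ``Hamiltonian''---the degree condition quoted there is Euler's, not Hamilton's). The paper's primary sufficiency proof is a direct greedy construction: start at $v_{\phi(j-1)}$, descend to the lowest vertex of the image, zigzag across the bottom interval until its multiplicity $a_{ij}$ is exhausted, then the next interval, until reaching $v_{\phi(j)}$, then go to the top of the image and work back down. That is precisely one explicit Euler trail through your multigraph, so the two arguments agree in substance; your version has the merit of making the degree-balance check $\mathrm{out}(v_l)-\mathrm{in}(v_l)=\epsilon_{l+1,j}-\epsilon_{lj}$ and the connectivity check explicit, which the paper leaves implicit. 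Your necessity argument---tracking the parity of the preimage count $\lvert (f|_{J_j})^{-1}(p)\rvert$ as a regular value $p$ crosses vertices, with $\pm 2$ changes at interior critical values and $\pm 1$ changes only at the endpoint images---is a more careful rendering of the paper's one-sentence observation that $[f(v_{j-1}),f(v_j)]$ is traversed oddly and the rest of the image evenly.

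One caution, which applies just as much to the paper's own proof and indeed to the proposition as stated: when $\phi(j-1)=\phi(j)$, conditions (1) and (2) force all entries of column $j$ to be even, but say nothing that forces the consecutive block of positive entries to sit adjacent to the vertex $\phi(j)$. Your assertion that ``Condition (1) confines the edges of positive multiplicity to one consecutive block of vertices containing both $\phi(j-1)$ and $\phi(j)$'' is justified when $\phi(j-1)\neq\phi(j)$, because then condition (2) produces odd (hence positive) entries in all rows strictly between them; but in the degenerate case $\phi(j-1)=\phi(j)$ with a nonempty even block far from $\phi(j)$, there is no Eulerian circuit based at the required vertex, and the paper's greedy zigzag likewise cannot get from $v_{\phi(j-1)}$ into the image without stepping on rows that are supposed to be zero. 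A realizable matrix cannot actually look like this (continuity of $f$ on $J_j$ forces the block to touch $\phi(j)$), so the fix is only to add this adjacency as part of the characterization or to note that $\phi$ takes adjacent vertices to distinct vertices. This is a small omission shared with the source, not a flaw specific to your write-up.
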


\begin{figure}[htpb]
\centering
\hbox{
\includegraphics[width=2.6in]{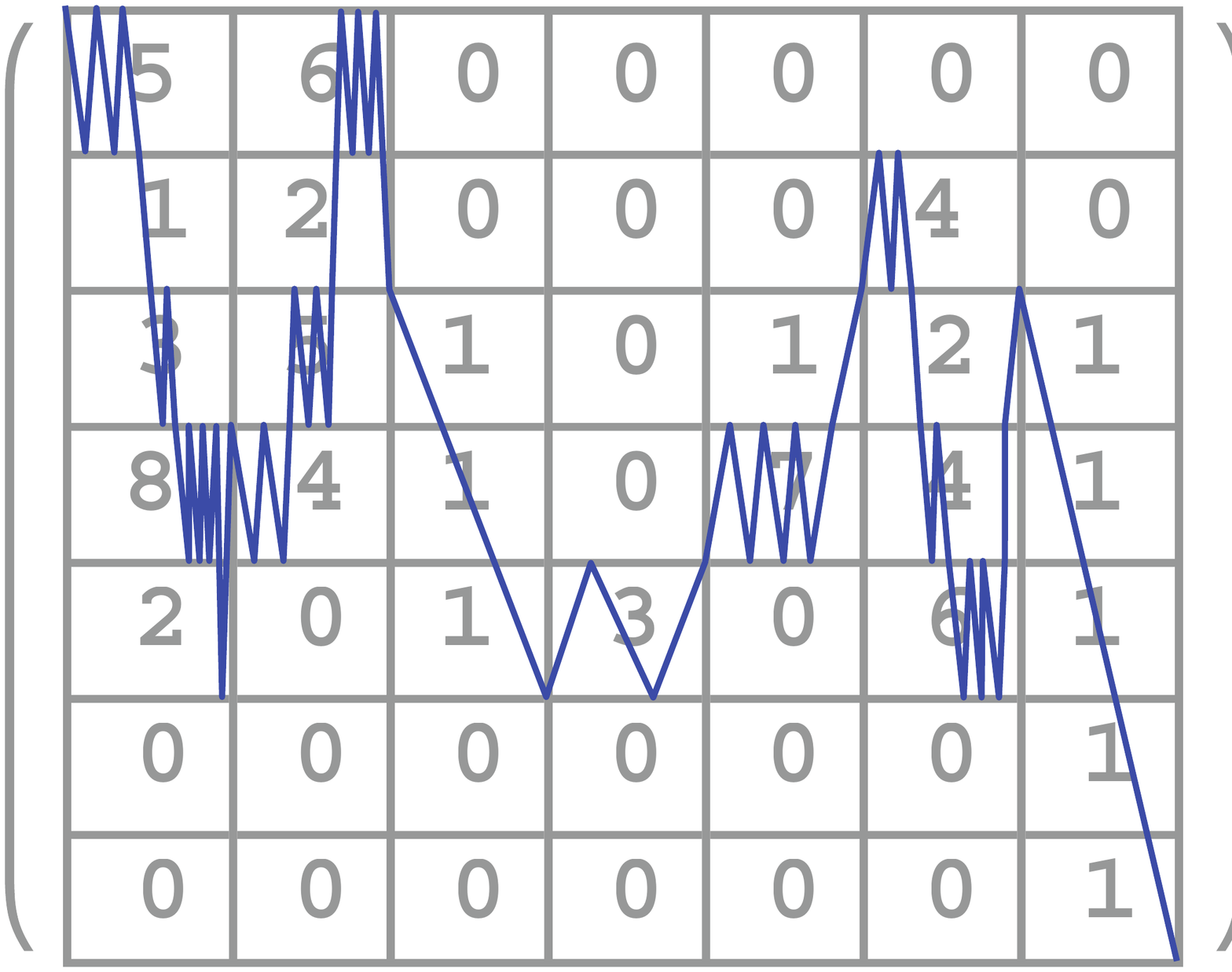}
\includegraphics[width=2.2in]{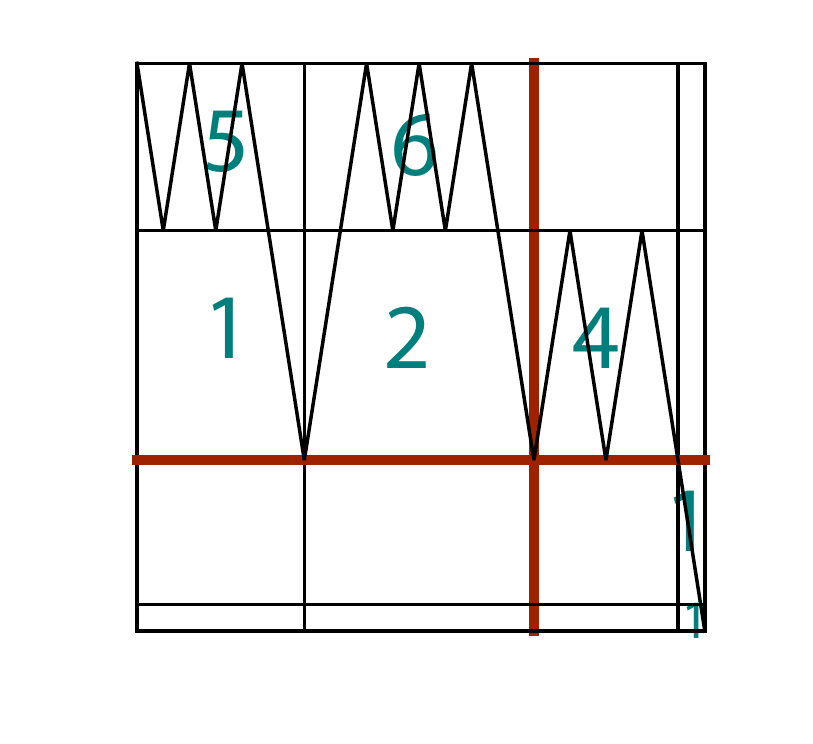}
}
\caption{
The matrix $M$ on the left
satisfies the necessary and sufficient conditions of proposition
\ref{IncidenceMatrixCharacterization} to be an incidence matrix for a 
selfmap of the interval: in each column, the positive entries in each column form a connected 
block as do the odd entries, and the odd blocks in the columns can be matched together end to end, from the left column to the right, to form a chain.
The blue line defines a PL function realizing the given matrix, since it
crosses each square the specified number of times. If you reflect the line across the top edge of the matrix, it matches the usual convention for
drawing a graph (since the convention for
matrices that the row numbers increase going downward is opposite the convention 
for graphs of functions).
On the right, the widths of rows and columns of the matrix have been adjusted
in proportion to the positive eigenvector for the transpose $M^t$. 
The matrix $M^t$ is not ergodic, entries 3,4,5 of the positive eigenvector are 0, and
the shaded blocks at left have collapsed to the red lines at right. Now
the graph can be drawn with constant absolute slope.
}
\label{fig:IncidenceMatrix}
\end{figure}
\begin{proof}
The necessity of the conditions is easy.  The map $\phi$ represents the map $f$ restricted to $V$.
The image of any  interval $J_i$  is necessarily the union of a consecutive block of intervals. The subinterval between the images of the first and last endpoints, $[f(v_{i-1}), f(v_i)]$ (which could be a degenerate interval) is traversed an odd number of times, and the rest of the image is traversed an even number of times.

Sufficiency of the conditions is also easy. To map $J_i$, start from $v_{\phi(i-1)}$, go to the lowest vertex in the image, zigzag across the lowest interval until its degree
is used up, then the next lowest, etc. until you get to $v_{\phi(i)}$, at which point proceed to the highest vertex in the image and work back. 
\end{proof}

\emph{Note}: Sufficiency can also
be reduced to the familiar condition that a graph admits a Hamiltonian path from vertex $a$ to vertex $b$ if and only if it is connected and either $a = b$ and all vertices have even valence, or $a$ and $b$ have odd valence and all other vertices have even valence.  For each $i$, apply this to the graph $\Gamma_i$
that has $a_{ij}$ edges connecting $v_{j-1}$ to $v_j$.  The entire map $f$ is
really a Hamiltonian path in the graphs $\Gamma_i$ connected in a chain by
joining  vertex $\phi(i)$ of $\Gamma_i$ to that of $\Gamma_{i+1}$, followed by the natural projection to $[0,1]$.

\begin{proposition}
The topological entropy of any map with extended incidence matrix $A$ is the log
of the largest positive eigenvalue of $A$.
\end{proposition}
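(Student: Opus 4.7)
The plan is to combine the Misiurewicz--Szlenk formula (Theorem \ref{Misiurewicz-Szlenk}) with a direct computation of the growth rate of entries of $A^n$. The first step is to verify that $A^n$ is itself the extended incidence matrix of $f^n$ with respect to the same subdivision $\{J_1,\dots,J_n\}$. Because $f(V)\subseteq V$ and $V$ contains every critical value, the same holds for $f^n$, so the extended incidence matrix of $f^n$ is defined. The lap structure of $f^n|_{J_j}$ is built by chaining laps of $f$: each of the $a_{kj}$ monotone laps of $f|_{J_j}$ covering $J_k$ gives rise to $(A^{n-1})_{ik}$ laps of $f^{n-1}$ over $J_i$, since $f^{n-1}$ restricted to $J_k$ has that many laps by induction. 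Summing over $k$ yields $(A^n)_{ij}=\sum_k (A^{n-1})_{ik} a_{kj}$, confirming that the incidence matrix of $f^n$ is $A^n$.

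The second step is to express the total variation in terms of $A^n$. Every lap of $f^n|_{J_j}$ covering $J_i$ contributes exactly $|J_i|$ to the variation, so
$$\Var(f^n) \;=\; \sum_{i,j}(A^n)_{ij}|J_i| \;=\; \ell^T A^n \mathbf{1},$$
where $\ell$ is the column vector of lengths $|J_i|$ and $\mathbf{1}$ is the all-ones vector.

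The third step is to apply Perron--Frobenius. Let $\lambda$ denote the largest positive eigenvalue of $A$, equivalently its spectral radius. For the upper bound, $\|A^n\|\le P(n)\lambda^n$ for some polynomial $P$, giving $\ell^T A^n \mathbf{1}\le C\, P(n)\lambda^n$ and $\limsup \tfrac{1}{n}\log \Var(f^n)\le \log\lambda$. For the lower bound, fix a nonzero nonnegative left Perron eigenvector $w$ with $w^T A=\lambda w^T$; since all $|J_i|>0$, there exists $c>0$ with $\ell \ge c w$ componentwise, so $\ell^T A^n \mathbf{1}\ge c\lambda^n w^T\mathbf{1}>0$, whence $\liminf\tfrac{1}{n}\log \Var(f^n)\ge \log\lambda$. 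Combining with Theorem \ref{Misiurewicz-Szlenk}, $h(f)=\log\lambda$.

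The main obstacle is the first step, the multiplicativity of the extended incidence count, which is the only place that invokes the structural hypotheses that $V$ contains all critical values and is $f$-invariant. Once multiplicativity is in hand, the rest is a routine application of Perron--Frobenius to the nonnegative matrix $A$ and the Misiurewicz--Szlenk growth formula; no assumption of ergodicity or mixing for $A$ is required, since the strict positivity of $\ell$ guarantees a nonvanishing projection on the Perron eigenspace.
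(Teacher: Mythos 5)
Your proposal is correct and follows essentially the same route as the paper's proof: express $\Var(f^n)$ as a positive linear combination of the entries of $A^n$, invoke the Misiurewicz--Szlenk formula to identify $h(f)$ with the growth exponent of $\Var(f^n)$, and conclude that this exponent is $\log$ of the spectral radius of $A$. The paper states this in two sentences; you have filled in the two details it leaves implicit, namely that $A^n$ is the extended incidence matrix of $f^n$ (so that the expression $\ell^T A^n \mathbf{1}$ is the right one for all $n$) and that strict positivity of the length vector $\ell$ forces $\ell^T A^n \mathbf{1}$ to grow at exactly the Perron rate, which is the standard Perron--Frobenius two-sided estimate.
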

\begin{proof}
The total variation of $f^n$ is a positive linear combination of matrix entries of $A^n$ (if all intervals have equal length, it is their sum). By \ref{Misiurewicz-Szlenk}, $h(f)$ is the exponent of growth of total variation, so this equals the log of the largest positive eigenvalue of $A$.
\end{proof}

\medskip

Suppose we are given a (strict) Perron number $\lambda$.  Our strategy is to first construct a strictly positive extended incidence matrix that has positive eigenvalue $\lambda^N$, for a large power $N$ of $\lambda$. We will promote this to an example with entropy $\lambda$ by implanting it as the return map replacing a periodic cycle of a map with entropy less than $\lambda$.

Afterwards, we will deal with additional issues involving questions of mixing and weak Perron numbers.

\medskip
Given $\lambda$, let $O_\lambda$ be the ring of algebraic integers in the field
$\mathbb Q(\lambda)$, and let $V_\lambda$ be the real vector space
$V_\lambda = \mathbb Q(\lambda) \otimes_\mathbb{Q} \mathbb R$. Another way to think of it is that $V_\lambda$ is the product of the real and complex places of $\mathbb Q(\lambda)$, that is, the product of a copy of $\mathbb R$ for each real root of the minimal polynomial for $\lambda$ and a copy of $\mathbb C$ for each pair of complex conjugate roots. The ring operations of $\mathbb Q(\lambda)$ extend continuously to $V_\lambda$, but division is discontinuous
where the projection to any of the places is 0.  Yet another way to think of
$V_\lambda$ is in terms of the companion matrix $C_\lambda$ for the minimal polynomial $P_\lambda$ of $\lambda$. We can identify $\mathbb Q(\lambda)$ with the set of all polynomials in $C_\lambda$ with rational coefficients, and $V_\lambda$ with the vector space on which the companion matrix acts.  The real and complex places of $\mathbb Q(\lambda)$ correspond to the minimal invariant subspaces of $C_\lambda$. When $P_\lambda$ is factored into polynomials that are irreducible over $\mathbb R$, the terms are linear and positive quadratic; the subspaces are in one-to-one correspondence with these factors.

We are assuming that $\lambda$ is a Perron number, so for multiplication of $V_\lambda$ by $\lambda$, the $\lambda$-eigenvector is dominant.  If we consider the convex cone $K_\lambda$ consisting of all points where the projection to the 
$\lambda$ eigenspace is larger than the projection to any of the other invariant subspaces, then $\lambda * K_\lambda$ is contained in the interior of $K_\lambda$ except at the origin.

\begin{proposition}
There is a rational polyhedral convex cone $KR_\lambda$ contained in $K_\lambda$ and containing $\lambda* K_\lambda$.
\end{proposition}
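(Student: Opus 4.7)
The plan is to use that the strict Perron hypothesis $|\lambda^\alpha| < \lambda$ for every nontrivial Galois conjugate forces $\overline{\lambda\, K_\lambda}$ to sit projectively compactly inside the open cone $K_\lambda$, and then to exploit density of $\Q(\lambda)$ inside $V_\lambda$ to slide finitely many rational supporting half-spaces in between.

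First I would record the quantitative gap. Put $\mu = \max_\alpha |\lambda^\alpha|/\lambda$, the max taken over Galois conjugates of $\lambda$ other than $\lambda$ itself; strict Perron is precisely $\mu < 1$. For $v \in K_\lambda$, multiplication by $\lambda$ acts on the place $W$ as multiplication by the complex number $\lambda^\alpha$, so $|(\lambda v)_W| = |\lambda^\alpha|\cdot |v_W| \le |\lambda^\alpha| v_\lambda = \mu\cdot (\lambda v)_\lambda$. Hence
$$\overline{\lambda\, K_\lambda} \;\subseteq\; K_\lambda^\mu \;:=\; \{v \in V_\lambda : v_\lambda \ge 0,\ |v_W| \le \mu\, v_\lambda \text{ for every non-}\lambda\text{ place } W\},$$
and $K_\lambda^\mu$ lies strictly inside $K_\lambda$, with an open ``buffer'' $K_\lambda \setminus K_\lambda^\mu$ between the two boundaries.

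Next I would construct a convex polyhedral cone sandwiched between $K_\lambda^\mu$ and $K_\lambda$. Fix any compact transverse cross-section $\Sigma$ of $K_\lambda$ (for instance its intersection with a Euclidean sphere), so that $\partial K_\lambda^\mu \cap \Sigma$ and $\partial K_\lambda \cap \Sigma$ are disjoint compact sets. At each point of $\partial K_\lambda^\mu \cap \Sigma$ choose a supporting half-space of $K_\lambda^\mu$ whose boundary, extended coneically, still misses a neighborhood of the corresponding direction in $\partial K_\lambda$. For real places the two supporting planes $v_\lambda = \pm v_W/\mu$ already suffice, but for each complex place the round quadratic boundary of $K_\lambda^\mu$ requires genuinely many tangent hyperplanes. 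By compactness of $\Sigma \cap \partial K_\lambda^\mu$, finitely many such half-spaces $H_1, \dots, H_N$ can be chosen: each contains $K_\lambda^\mu$, and their intersection is contained in $K_\lambda$.

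Finally I would rationalize. Since $V_\lambda = \Q(\lambda) \otimes_\Q \R$, the image of $\Q(\lambda)^*$ is dense in the real dual of $V_\lambda$, so each real linear functional defining $H_j$ can be perturbed to a $\Q$-linear functional $L_j$ on $\Q(\lambda)$ extended $\R$-linearly to $V_\lambda$. Both conditions ``$K_\lambda^\mu \subseteq H_j$'' and ``$\bigcap_j H_j \subseteq K_\lambda$'' are open in the defining functionals, thanks to the uniform gap $1-\mu$ and the compactness of $\Sigma$; so sufficiently small perturbations preserve them. The resulting rational polyhedral convex cone $KR_\lambda := \{v : L_j(v) \ge 0 \text{ for all } j\}$ then satisfies $\overline{\lambda\, K_\lambda} \subseteq K_\lambda^\mu \subseteq KR_\lambda \subseteq K_\lambda$. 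The main obstacle is the round, non-polyhedral boundaries coming from the complex places; this is exactly where compactness of the cross-section and the strict gap $\mu < 1$ are essential, since together they reduce an approximation problem on a curved surface to a finite covering by tangent hyperplanes.
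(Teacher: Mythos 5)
Your proof is correct in spirit but takes the dual route that the paper avoids. The paper works entirely projectively: $\proj(K_\lambda)$ is a product of intervals and disks, hence a convex body; $\proj(\lambda\,K_\lambda)$ is compact in its interior; and rational points are dense. One then simply chooses finitely many rational points in the interior of $\proj(K_\lambda)$ whose convex hull covers $\proj(\lambda\,K_\lambda)$, and cones over them. Convexity of $\proj(K_\lambda)$ makes the containment $KR_\lambda\subset K_\lambda$ automatic, and the generators are rational by fiat, so no perturbation argument is ever needed. You instead build $KR_\lambda$ from its facets---supporting half-spaces of the intermediate cone $K_\lambda^\mu$---and rationalize the linear functionals afterward. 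Both produce a rational polyhedral cone (vertex description versus half-space description), but the primal construction in the paper is shorter precisely because it bypasses the rationalization step.

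That step has a wrinkle worth fixing in your version: the condition $K_\lambda^\mu\subseteq H_j$ is \emph{not} open in the defining functional when $H_j$ is a genuine supporting half-space of $K_\lambda^\mu$, since an arbitrarily small tilt of a tangent hyperplane can cut into $K_\lambda^\mu$. This matters because $\overline{\lambda\,K_\lambda}$ actually meets $\partial K_\lambda^\mu$ along the directions where the maximum ratio $\mu=\max_\alpha|\lambda^\alpha|/\lambda$ is attained, so a bad tilt could cut into $\lambda\,K_\lambda$ itself, which is the inclusion you need. The gap $1-\mu$ separates $K_\lambda^\mu$ from $\partial K_\lambda$, not from your tangent $H_j$. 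The repair is easy: take supporting half-spaces of $K_\lambda^{\mu'}$ for some $\mu<\mu'<1$, so they contain $K_\lambda^\mu$ (and hence $\overline{\lambda\,K_\lambda}$) with uniform slack, or equivalently push each tangent $H_j$ outward slightly before rationalizing---possible because $\proj(K_\lambda^\mu)$ is compact in the open set $\interior\proj(K_\lambda)$. With that adjustment both inclusions become genuinely open conditions and the rationalization goes through.
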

\begin{proof} It is easiest to think of this projectively, in $\mathbb P(V_\lambda)$.
The projective  image $\proj(K_\lambda)$ of $K_\lambda$ is a convex set (specifically, a product of intervals and disks), with the image of $\proj(\lambda* K_\lambda)$ contained in its interior. Since rational points $\proj(\Q(\lambda))$ are dense in $\proj(V_\lambda)$, we can
readily find a set of rational points in the interior of $\proj (K_\lambda)$ whose convex hull contains $\proj(\lambda* K_\lambda$). This gives us the desired
rational polyhedral convex cone.
\end{proof}

Let $S_\lambda$ be the additive semigroup $O_\lambda \cap KR_\lambda \setminus \{0\}$.

\begin{proposition}
$S_\lambda$ is finitely generated as a semigroup.
\end{proposition}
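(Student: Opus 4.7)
The plan is to recognize the statement as an instance of Gordan's lemma: the intersection of a rational polyhedral cone with a lattice is a finitely generated monoid. So the strategy has two parts, setting up the lattice structure on $O_\lambda$ inside $V_\lambda$, and then running the standard Gordan argument.

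First I would verify that $O_\lambda$ sits as a lattice (a discrete cocompact subgroup) in $V_\lambda$. This is just the Minkowski embedding: as an abelian group $O_\lambda \cong \Z^d$ where $d = \deg(\lambda) = \dim_\R V_\lambda$, and the product of the real and complex places identifies $O_\lambda$ with a discrete subgroup whose covolume is $|{\rm disc}(\Q(\lambda))|^{1/2}$. The key point for us is only discreteness: every bounded subset of $V_\lambda$ meets $O_\lambda$ in a finite set.

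Next, because $KR_\lambda$ was constructed to be rational polyhedral, its finitely many extreme rays pass through points of $\Q(\lambda) \subset V_\lambda$. Clearing denominators, I may choose generators $v_1,\dots,v_k \in O_\lambda \cap KR_\lambda$ with
\[
KR_\lambda = \Bigl\{\, \sum_{i=1}^k t_i v_i : t_i \ge 0 \,\Bigr\}.
\]
Let $P = \{\sum s_i v_i : 0 \le s_i \le 1\}$ be the compact fundamental parallelepiped spanned by the $v_i$. Since $P$ is bounded and $O_\lambda$ is discrete, $P \cap O_\lambda$ is finite.

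Finally, given any $x \in S_\lambda$ write $x = \sum t_i v_i$ with $t_i \ge 0$ and split $t_i = \lfloor t_i \rfloor + \{t_i\}$, yielding
\[
x \;=\; \sum_{i} \lfloor t_i \rfloor\, v_i \;+\; \sum_{i} \{t_i\}\, v_i.
\]
The first summand lies in the semigroup $\mathbb{N}\langle v_1,\dots,v_k\rangle \subset O_\lambda$, so the second summand lies in $O_\lambda \cap P$. Thus $S_\lambda$ is generated, as an additive semigroup, by the finite set $\{v_1,\dots,v_k\} \cup \bigl((P \cap O_\lambda) \setminus \{0\}\bigr)$. There is no serious obstacle here beyond confirming the lattice structure on $O_\lambda$; the argument is the textbook proof of Gordan's lemma transplanted to the Minkowski picture.
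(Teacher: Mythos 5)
Your proof is correct, but it takes a genuinely different route from the paper's. You transplant the textbook proof of Gordan's lemma into the Minkowski picture: realize $O_\lambda$ as a lattice in $V_\lambda$, choose lattice generators $v_1,\dots,v_k$ of the rational polyhedral cone $KR_\lambda$, and use the compact fundamental parallelepiped $P$ together with discreteness of $O_\lambda$ to extract the finite generating set $(P \cap O_\lambda) \setminus \{0\}$. This is concrete and even gives an explicit (if crude) bound on the generators in terms of the geometry of the $v_i$. The paper instead works at a higher level of abstraction: it compactifies $S_\lambda$ by adjoining its projective limit set $\proj(KR_\lambda)$, observes that the closures of the translated ideals $s + S_\lambda$ form a basis for this compact topology, and extracts a finite subcover, whose centers $s_1,\dots,s_k$ generate by repeated subtraction. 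The topological argument is shorter to state but leans on the claim that the $U_s$ form a basis, and gives less quantitative information; your lattice argument is longer but entirely elementary and self-contained. There is no gap in your version — the one point worth making explicit is that $KR_\lambda$ is pointed (it lies inside $K_\lambda$, where the $\lambda$-coordinate strictly dominates, so it contains no line), which is what guarantees that $S_\lambda = O_\lambda \cap KR_\lambda \setminus \{0\}$ is actually closed under addition and hence a semigroup at all.
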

\begin{proof}
This is a standard fact.  Here's how to see it using elementary topology:  
We can complete $S_\lambda$  by adding on the set of projective limits $\proj(KR_\lambda)$.  The completion is compact.  It's easy to see that the set of closures $U_s$ of the ideals $s+S_\lambda$ for $s \in S_\lambda$ form a basis for this topology.  By compactness, the cover by basis elements has a finite subcover $U_{s_1}, \dots, U_{s_k}$.  For any such cover, the set $G = \left \{ s_1, \dots, s_k\right\}$ form a generating set: given any element, write it as $s_i + s$, and continue until the remainder term $s$ is 0.
\end{proof}

\begin{proof}[Proof of Converse Perron-Frobenius \ref{ConversePerron}]
Equipped with this picture, it is now easy to prove the converse Perron-Frobenius theorem of Lind.  Consider the
free abelian group $\Z^G$ on the set $G$ of semigroup generators. The
positive semigroup in $\Z^G$ maps surjectively to $S_\lambda$. Call this map $p$. We can lift the
action of multiplication by $\lambda$ to an endomorphism of the positive
semigroup, by sending each generator $g$ to an arbitrary element of $p^{-1} (\lambda * p(g))$. In coordinate form, this is described by a non-negative integer  matrix.  In $S_\lambda$, 
projection to the $\lambda$-space is a dual eigenvector, that is, a linear functional multiplied by $\lambda$ under the transformation. It is strictly positive on $S_\lambda$. Therefore, the pullback of this function is a strictly
positive $\lambda$-eigenvector of the transpose matrix, proving Lind's theorem.
\end{proof}

\medskip
Note that the minimum size of a generating set might be much larger than  the dimension of $V_\lambda$.  For instance, if $\lambda$ is a quadratic algebraic integer, $V_\lambda$ is the plane, and $KR_\lambda$ could be bounded by any pair
of rational rays that make slightly less than a $45^\circ$ angle to the $\lambda$ eigenvector.  Minimal generating sets can be determined using continued fraction expansions of the slopes; they can be arbitrarily large. 

As we shall presently see, it can happen in higher dimensions that the minimum size of a generating set for $S_\lambda$ can be 
very large no matter how we choose a semigroup $S_\lambda$ on which
multiplication by $\lambda$ acts as an endomorphism.

\section{Second Step: Constructing a map for $\lambda^N$}\label{sec:ProofMain1b}

Now we need to address the special requirements for an incidence matrix for a
map having a finite invariant set as the set of critical values.  Given any
Perron number $\lambda$ of degree $d$, we will construct such a matrix for some power, probably large, of $\lambda$.  From the previous section, \ref{sec:ProofMain1a},
we assume we have a set $G$ of semigroup generators for a semigroup in $S_\lambda \subset V_\lambda = \Q(\lambda) \otimes_\Q \R $ invariant under multiplication by $\lambda$.

Choose a finite sequence of generators, including each generator at least once, such that the partial sums of the sequence contain all $2^d$ mod 2 congruence classes, that is, the partial sums map surjectively to
$O_\lambda / (2 O_\lambda)$. If necessary, adjoin additional elements to the sequence so that the sum $T$ of the entire sequence, mod 2, is 0.  

The action of $\lambda$ (by multiplication) on the projective completion of $S_\lambda$ has a unique attracting fixed point. For any $s \in S_\lambda$, the closure of $s + S_\lambda$ contains the fixed point, so there is some power $N$  such that $\lambda^N * S_\lambda \subset 3 T+S_\lambda$. 

Now mark off an interval of length $T$ into segments whose lengths are given
by the chosen sequence $g_1, \dots, g_k$ of generators (in the embedding of $O_\lambda$ in $\R$ where $\lambda$ goes to $\lambda$).  We'll construct a $\lambda^N$-expander map by
induction, going from left to right, starting with $0 \rightarrow 0$. There is
some point $q$ in this subdivision that has the same value mod $2O_\lambda$ as
$\lambda^N * g_1$. Since $\lambda^N * g_1$ can be expressed as $3 T$ plus a sum
of generators, it can also be expressed as $q + 2T$ plus a sum of generators.
Since $(\lambda^N * g_1 - q - 2T)$ is in $S_\lambda$ and congruent to zero mod 2,
it is divisible by 2 in $S$: it can be expressed as  $2 \alpha$ where $\alpha \in S_\lambda$.  We can write $2\alpha$ as a linear combination of generators
with even coefficients, so we can write $\lambda^N * g_1$ as a strictly positive
sum of generators, where each generator between $0$ and $q$ occurs an odd number of times, and each other generator occurs an even number of times.

We can continue in exactly the same way for $\lambda^N$ times each of the points in the subdivision. We first pick where each point $g_i$ goes  based on the
congruence class of $\lambda^N *  g_i$ mod 2, then express the difference as an
even and strictly positive linear combination of all generators in the sequence.
Finally we end with $T$ going to either $0$ or $T$, as we choose. The incidence
matrix satisfies the conditions of \ref{IncidenceMatrixCharacterization}, so
we have constructed a $\lambda^N$ uniform expander.

\section{Powers and Roots: Completion of proof of Theorem \ref{Main1}} \label{sec:ProofMain1c}

Given a Perron number $\lambda$, we'll construct a map of $S^1$ to itself that is a $\lambda$ uniform expander, because the construction is a little nicer for $S^1$.  From the preceding section, for some $N$ we construct a $\lambda^N$ uniform expanding map $f_{\lambda^N}$ of an interval that takes each endpoint to itself.  Let $\rho$ be any rotation of the circle of order $N$.  The circle can be subdivided
into $N$ intervals that are cyclically permuted by the rotation.  Define a metric on the circle so that in this cyclic order, the $N$ intervals have length
$1, \lambda, \lambda^2, \dots, \lambda^{N-1}$.  Now define $g_\lambda: S^1 \rightarrow S^1$ by mapping each of the first $N-1$ intervals affinely to the next, and mapping the last interval to the first using $f_{\lambda^N}$ with
affine adjustments in the domain and range to send the last interval exactly to the first.  Since the first interval has length $1/\lambda^{N-1}$ times the last, with this affine adjustment $f_{\lambda^N}$ also expands uniformly by $\lambda$.

In the case of the circle, we can easily modify the construction of $g_\lambda$ to make the incidence matrix mixing: this will happen if we change any small piece of $f_{\lambda^N}$ to stray into a neighboring interval and back, when it gets to the upper endpoint. If the rotation is chosen as a rotation by $1/N$ and if the first generator is chosen to be a "small" element in $S_\lambda$, these intervals are small, and straying is probably possible with ease, but in any case, by taking $N$ to be a somewhat higher power, it can be readily guaranteed.

\begin{figure}[htbp]
\centering
\includegraphics[width=4in]{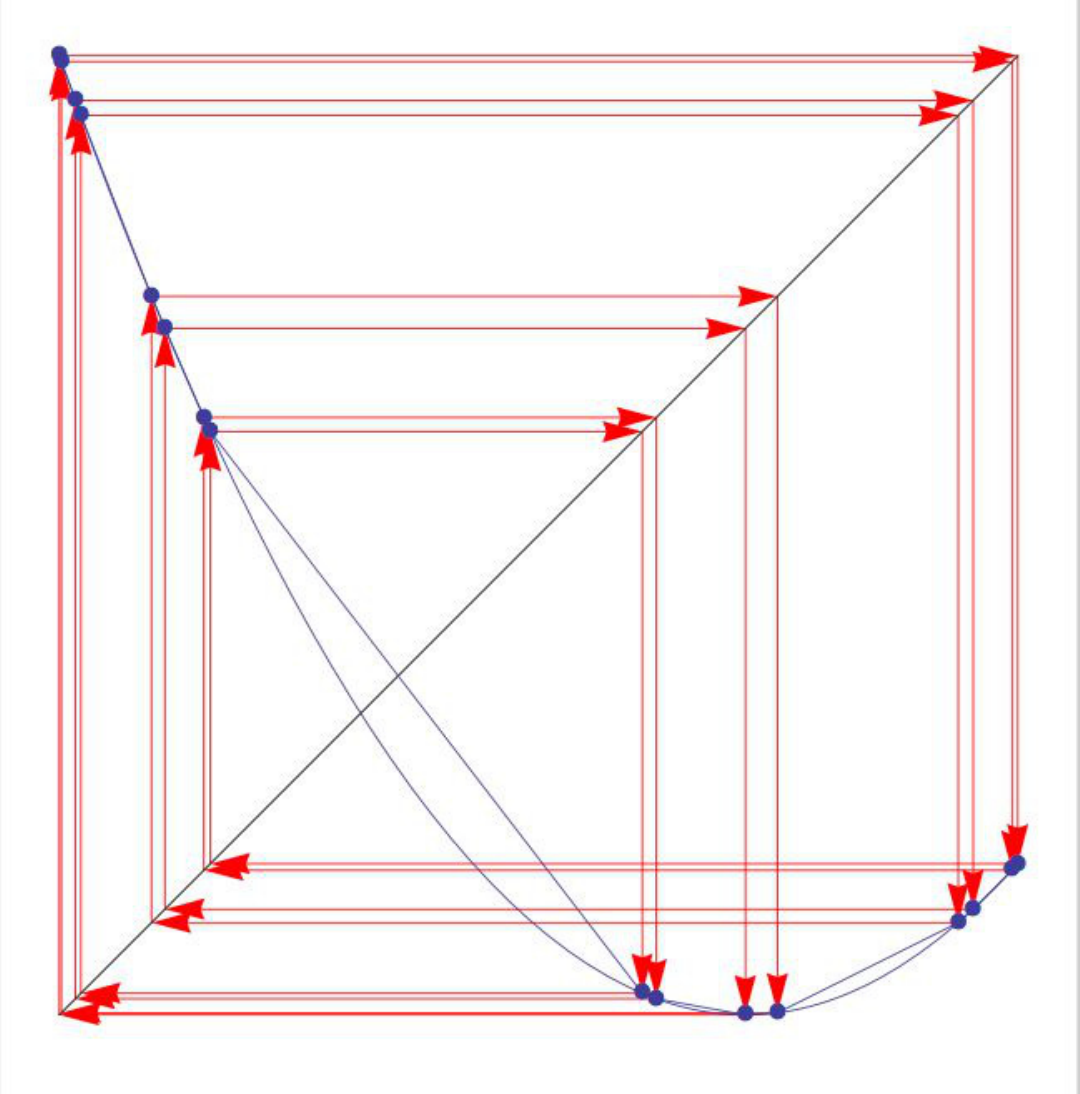}
\caption{This is the graph of a quadratic map in the initial period-doubling
cascade, with critical point of period 16. This, and all other quadratic maps
in this family, have entropy 0.  If the critical orbit is blown up and replaced
with a sequence of intervals where the return map has is a $\lambda^{16}$-uniform expander,
then the resulting map is semiconjugate to a $\lambda$-uniform expander.
}
\label{fig:PeriodDoubling}
\end{figure}

In the case of the unit interval, we need a substitute for a rotation of the circle. 
For this, we can use the well-known period-doubling cascade for quadratic self-maps of an interval (see figure \ref{fig:PeriodDoubling} for an illustrative example).  In this  period-doubling family, there is a quadratic map $q_n$ with entropy 0 in which the critical point has period $2^n$. We can blow up
the forward and backward orbit of the critical point, replacing each point $x$
in the orbit by
a small interval $I_x$ of any length $l_x$  such that  the set of lengths is summable.  Extend the map to these intervals by affine homeomorphisms, with the
exception of the interval for the critical point; for that, we can use any affine map that takes both endpoints to 0.

Since the entropy of the quadratic map is 0, the number of critical points of $q_n^k$ grows subexponentially in $k$, so if we assign length $\lambda^{-k}$ to  each interval for a point that is critical for $q_n^k$ but not for $q_n^{k-1}$, the set of lengths is
summable.  After blowing up the full orbit of the critical point in this way, we
can define a pseudo-metric on the interval where the length of an interval is the
measure of its intersection with the blown-up orbit; everything else collapses to measure 0.

By the preceding section, we can find an $f_{\lambda^N}$ of the form $N = 2^n$
that maps the unit interval to itself, taking both endpoints to $0$. Implant this, using affine adjustments in the domain and the range, for the map from the interval for the critical point of $q_n$ to its image.  The image of the critical interval has length $\lambda^{2^n - 1}$ since the original critical point had period $2^n$, so the implanted map is a uniform $\lambda$-expander.

We now have a map which is a local homeomorphism with Radon derivative $\lambda$ in the complement of the critical interval, where it is a uniform $\lambda$-expander.  Therefore, the entire map is a piecewise-linear uniform $\lambda$-expander, with entropy $\log(\lambda)$.  This completes the proof of theorem \ref{Main1} for strong Perron numbers.

\medskip
Now we'll address weak Perron numbers.
\begin{proposition}
A positive real number $\lambda$ is a weak Perron number if and only if some power of $\lambda$ is a [strong] Perron number.
\end{proposition}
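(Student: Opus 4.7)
The $(\Leftarrow)$ direction is immediate: if $\lambda^N$ is strong Perron for some $N \geq 1$, then $\lambda$ is a positive real root of $x^N - \lambda^N \in \mathbb{Z}[x]$ and hence an algebraic integer, and any Galois conjugate $\sigma(\lambda)$ satisfies $|\sigma(\lambda)|^N = |\sigma(\lambda^N)| \leq \lambda^N$, so $|\sigma(\lambda)| \leq \lambda$, making $\lambda$ weak Perron.

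For $(\Rightarrow)$, let $r_1 = \lambda, r_2, \ldots, r_d$ be the Galois conjugates of $\lambda$ and $P = \{i : |r_i| = \lambda\}$ the peripheral indices. The Galois conjugates of $\lambda^N$ are the distinct values in $\{r_i^N\}$; the non-peripheral $r_j$ automatically give $|r_j^N| < \lambda^N$, so $\lambda^N$ is strong Perron iff $r_i^N = \lambda^N$ for every $i \in P$. Writing $\zeta_i = r_i/\lambda$, this is equivalent to every $\zeta_i$ being a root of unity (of some order); in that case $N = \operatorname{lcm}\{\operatorname{ord}(\zeta_i) : i \in P\}$ works. So the forward direction reduces to the key claim that every peripheral ratio $\zeta_i$ is a root of unity.

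I plan to establish the key claim via Perron-Frobenius, realizing $\lambda$ as the spectral radius of a non-negative integer matrix $A$ by adapting the cone-and-semigroup construction of section~\ref{sec:ProofMain1a}. Form $K_\lambda \subseteq V_\lambda$, the cone where the projection to the $\lambda$-eigenspace strictly dominates the projections to the other invariant subspaces; choose a rational polyhedral subcone $KR_\lambda$ containing the $\lambda$-eigenvector with $\lambda \cdot KR_\lambda \subseteq KR_\lambda$; and take the non-negative integer endomorphism of the free abelian group on a finite set of semigroup generators of $O_\lambda \cap KR_\lambda \setminus \{0\}$ induced by multiplication by $\lambda$. Since the minimal polynomial of $\lambda$ divides the characteristic polynomial of the resulting $A$, every Galois conjugate of $\lambda$ is an eigenvalue of $A$. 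The Perron-Frobenius structure theorem for non-negative matrices then forces the peripheral spectrum of $A$, and in particular each peripheral conjugate $r_i$, to have the form $\lambda \omega$ with $\omega^h = 1$ for some integer $h$, which is the key claim.

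The hard part will be producing the $\lambda$-invariant rational polyhedral cone in the weak Perron setting. In the strong Perron case used for Lind's theorem, the projective action of multiplication by $\lambda$ on $K_\lambda$ strictly contracts toward the $\lambda$-eigenvector, so one can easily trap a polyhedral cone between $\lambda K_\lambda$ and $K_\lambda$; with weak Perron the peripheral eigenvalues contribute rotational isometries in the projective picture, so a finite polyhedral cone can absorb the $\lambda$-orbit only when those rotations are periodic---which is equivalent to what we are trying to prove. Breaking this apparent circularity is the crux of the argument, and will likely proceed either by a compactness argument in $\proj(V_\lambda)$ that delivers an invariant rational polyhedron directly, or by a parallel number-theoretic argument showing that the multiplicative group generated by the finite set $\{r_i/r_j : i, j \in P\} \subset K^*$ is itself finite, so that its elements (being on the unit circle in the chosen embedding) are roots of unity.
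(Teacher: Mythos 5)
Your reduction of the forward direction is correct: showing $\lambda^N$ is strong Perron for some $N$ is equivalent to showing that every peripheral ratio $\zeta_i = r_i/\lambda$ is a root of unity, and taking $N$ to be the lcm of the orders then works. But the key claim itself is not established, and the obstacle is genuine, not cosmetic. As you yourself observe, the cone construction of section~\ref{sec:ProofMain1a} is circular here: building a $\lambda$-invariant rational polyhedral cone in the weak Perron case requires a finite polyhedron to tolerate the rotational action of the peripheral conjugates in $\proj(V_\lambda)$, which is possible precisely when those rotations are periodic --- i.e.\ precisely when the $\zeta_i$ are roots of unity. Your alternative ``number-theoretic'' route also does not go through as stated: the ratios $\zeta_i$ are algebraic numbers of modulus $1$ but not algebraic integers, and their Galois conjugates need not lie on the unit circle, so Kronecker's theorem does not apply; finiteness of the multiplicative group generated by $\{r_i/r_j\}$ is not automatic and would itself require an argument.

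The paper's proof sidesteps Perron--Frobenius entirely with a short, self-contained trick. Let $B$ be the set of Galois conjugates of $\lambda$ of modulus $\lambda$, let $n = \#(B)$, and let $b = \prod_{r \in B} r$. Since $B$ is closed under complex conjugation, $b$ is real with $|b| = \lambda^n$; after replacing $b$ by $b^2$ if $b<0$, assume $b = \lambda^n > 0$. For any Galois automorphism $\sigma$, one has $\sigma(b) = \prod_{r \in \sigma(B)} r$, and $|\sigma(b)| < b$ unless $\sigma(B) = B$, in which case $\sigma(b) = b$; hence $b$ is a strong Perron number whose unique conjugate of maximal modulus is $b$ itself. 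Now for $r_i \in B$ pick $\sigma$ with $\sigma(\lambda) = r_i$; then $r_i^n = \sigma(\lambda^n) = \sigma(b)$ is a conjugate of $b$ of modulus $\lambda^n = b$, forcing $r_i^n = b = \lambda^n$ and hence $(r_i/\lambda)^n = 1$. That is exactly the claim your reduction required, obtained in a few lines without realizing $\lambda$ as a spectral radius. You should replace the Perron--Frobenius plan with this argument, or find an independent way to break the circularity you correctly identified.
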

\begin{proof}
In one direction this is pretty obvious: the $n$th roots of any algebraic integer are algebraic integers, and their ratios to each other are $n$th roots of unity. Thus the positive real $n$th root of a Perron number is a weak Perron number.

In the other direction, suppose $\lambda$ is a weak Perron number. Let
$B$ be the set of Galois conjugates of $\lambda$ with absolute value $\lambda$, and let $b$ be their product. This product $b$ is a real number equal to $\lambda^{\#(B)}$.  The Galois conjugates of $b$ are
products of $\#(B)$ Galois conjugates of $\lambda$; for any subset of conjugates of this cardinality other than $B$, the product is strictly smaller, so $b$ is a Perron number. Since $\lambda^n = b$, the other elements of $B$ also satisfy this equation, so their ratio to $\lambda$ is a root of unity.
\end{proof}

Now given a weak Perron $\lambda$, we first find a power $k$ so that $\lambda^k$ is a Perron number.  In the family of degree 2 uniform expanders, functions with periodic critical point are dense, and functions with critical point having period a multiple of $k$ are also dense.  Choose such a function $g$ whose entropy is less than $\lambda$, where the critical point has period a sufficiently high multiple $k n$ of $k$. Blow up the full (backwards and forward) orbit of the critical point, and implant a map of the form 
$f_{\lambda^{k n}}$, as constructed in the preceding section \ref{sec:ProofMain1b}, in the interval replacing the critical point. Since the growth rate of critical points for powers of $g$ is less than the growth rate of powers of $\lambda$, we can construct a metric just as before that is uniformly expanded by $\lambda$.
\medskip

Note that the $\lambda$-uniform expanders we have constructed are very far from mixing. This is of course impossible if $\lambda$ is only a weak Perron number, by the Perron-Frobenius theorem, but for a Perron number $\lambda$ it is tempting to try to generalize the straying technique that worked earlier.

There are two difficulties.  The first is an essential problem:
\begin{proposition}
For  \emph{any} self-map $f$ of the interval with entropy in
the interval $(0, \log (\sqrt 2))$, there are two disjoint subintervals that are interchanged by the map, and in particular, $f$ is not mixing.
\end{proposition}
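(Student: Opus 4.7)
The plan is to reduce to a uniform expander using Theorem~\ref{PLEntropy} and then exploit the classical bound that every topologically transitive continuous self-map of a non-degenerate interval has topological entropy at least $\log\sqrt{2}$.

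First, by Theorem~\ref{PLEntropy} I would semi-conjugate $f$ to a uniform $\lambda$-expander $g = \PL(f)$ of the same entropy, with $\lambda = \exp(h(f)) < \sqrt{2}$. A pair of disjoint subintervals interchanged by $g$ pulls back through the semi-conjugacy to a pair of disjoint closed sets exchanged by $f$, whose convex hulls are then subintervals interchanged by $f$; so it suffices to produce such a pair for $g$. After restricting to the core $\bigcap_n g^n(I)$, I may further assume $g$ is surjective on an invariant subinterval carrying the entropy.

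Next, I would cite the classical theorem (Block--Coppel, \emph{Dynamics in One Dimension}) that every topologically transitive continuous self-map of a non-degenerate interval has topological entropy at least $\log\sqrt{2}$, with equality attained by the tent map of slope $\sqrt{2}$. Since $h(g) < \log\sqrt{2}$, the map $g$ is not transitive on its core. The standard decomposition of non-transitive interval maps of positive entropy then yields a cycle of pairwise-disjoint subintervals $J_0, J_1, \dots, J_{p-1}$ with $g(J_i) = J_{i+1\bmod p}$ and with $g^p|_{J_0}$ transitive and carrying all the entropy. Applying the transitivity bound to $g^p|_{J_0}$ gives $p \cdot h(g) \ge \log\sqrt{2}$, forcing $p \ge 2$.

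When $p = 2$, the intervals $J_0$ and $J_1$ are the pair we want. For general even $p$, the unions $A = \bigcup_i J_{2i}$ and $B = \bigcup_i J_{2i+1}$ are disjoint and interchanged by $g$, and their convex hulls remain disjoint because of the alternating spatial pattern forced on cycles of intervals for continuous interval maps at each nested level of renormalization. The main obstacle is the odd case $p \ge 3$: here I would combine the transitivity bound with Sharkovsky--Misiurewicz-type entropy lower bounds forced by cycles of intervals of odd period, or alternatively iterate the decomposition applied to $g^p|_{J_0}$ (which has entropy $p \cdot h(g) < p\log\sqrt{2}$) to extract a further nested period-$2$ structure and then pull the two components back through the ambient $g$-action to obtain two spatially disjoint $g$-interchanged subintervals of the core. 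Carrying out this final case cleanly, and in particular verifying that the extracted pair remains a pair of \emph{subintervals} rather than unions of intervals, is the subtlest part of the argument.
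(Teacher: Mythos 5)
Your route is genuinely different from the paper's. The paper's (terse) argument is elementary and self-contained: it introduces the set $D(f)$ of points with more than one $f$-preimage, observes that $D(f^2)=D(f)\cup f(D(f))$, and restricts to the invariant subinterval $[a,b]$ spanned by $D(f^2)$, which carries the full entropy; the swapped pair is then to be extracted from this localization of the folding locus. You instead appeal to the Block--Coppel lower bound $h\ge \log\sqrt 2$ for transitive interval maps, together with the spectral decomposition of non-transitive positive-entropy interval maps into cycles of subintervals with transitive return map, and try to read off a period-$2$ structure from that cycle. Each route buys something: the paper's argument applies to arbitrary continuous self-maps of the interval, whereas your first reduction, semiconjugating to a uniform expander via Theorem~\ref{PLEntropy}, requires finitely many turning points, which the proposition's hypothesis (``any self-map'') does not grant; on the other hand your route makes the role of the threshold $\log\sqrt 2$ conceptually transparent as the entropy of the boundary case of transitivity.

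The substantive gap is the one you flag yourself, and it is not closed. Your argument only concludes if the period $p$ of the interval cycle can be taken even, and your two sketched escape routes for odd $p$ (``Sharkovsky--Misiurewicz-type bounds'' and ``iterate the decomposition'') are not carried out; moreover, iterating the decomposition does not obviously terminate, since $p\cdot h(g)$ can exceed $\log\sqrt 2$ already for $p=3$. The gap is in fact repairable, and the repair makes the worry disappear: for continuous interval maps, a transitive-but-not-mixing map splits the interval at a fixed point into exactly two subintervals interchanged by the map (Alsed{\'a}--Llibre--Misiurewicz), so the periods occurring in the nested hierarchy of interval cycles are automatically powers of $2$; since $h(g)<\log\sqrt 2$ rules out $g$ being transitive on its core, the top level of the hierarchy is already a period-$2$ cycle. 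You should cite that structure theorem directly rather than reasoning about a general $p$. Your fallback claim that for even $p$ the unions $\bigcup_i J_{2i}$ and $\bigcup_i J_{2i+1}$ have disjoint convex hulls ``because of the alternating spatial pattern'' is not justified and is not the right way to package the period-doubling hierarchy; the coarser period-$2$ cycle should come from the hierarchy itself, not from taking convex hulls of alternate blocks.

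A smaller point: when pulling back the interchanged pair through the semiconjugacy $\pi$, note that $\pi$ is monotone, so $\pi^{-1}(J_i)$ is already an interval and no passage to convex hulls is needed; but you should first shrink the $J_i$ at the uniform-expander level so their closures are disjoint, otherwise $\pi^{-1}(J_0)$ and $\pi^{-1}(J_1)$ may share an entire collapsed fiber.
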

\begin{proof}
For a map $f:I \mapsto I$, let $D(f)$ be the set of points that have more than one preimage. Note that $D(f) \subset D(f^2)$ and also $f(D(f)) \subset D(f^2)$;
in fact, $D(f^2) = D(f) \cup f(D(f))$.
Let $a = \inf (D(f^2))$, and $b = \sup(D(f^2))$.
Then $[a,b]$ is mapped into itself, and has the same entropy as $f$.

\end{proof}

In particular, they never mix.
By induction, if the entropy is less than $2^{-n}\log 2$, there are $2^n$ disjoint intervals cyclically permuted by the map.

The second problem is that the piecewise-linear map we constructed above has
coefficients in $\Q(\lambda)$ (either because the formulas from kneading theory for the  infinite sums of intervals are rational functions of $\lambda$, or because they are determined by linear functions with coefficients in $O_\lambda$),
but it is not obvious how to get coordinates to be in $O_\lambda$.  Maybe it's possible to analyze and control the algebra, but if so it's beyond the scope of this paper.

\section{Maps of Asterisks}\label{sec:Asterisks}

Selfmaps of graphs, including in particular
Hubbard trees, give another interesting collection of postcritically finite maps of graphs.   For use later in constructing automorphisms of free groups, we will look at a special case, the \emph{asterisks $\ast$}. An \emph{$n$-pointed asterisk} is the cone on a set of $n$ points,
which we'll refer to as the \emph{tips} of the asterisk.

\begin{theorem}\label{AsteriskMaps}
For  every Perron number $\lambda$ there is a postcritically finite
$\lambda$ uniform expanding self-map $f$ of some asterisk such that 
\begin{itemize}
\item{$f$ fixes the center vertex, and}
\item{each edge maps to an edge-path in a way that every edge is the first element of the image edge-path of some edge}
\item{the incidence matrix for $f$ is mixing.}
\end{itemize}
\end{theorem}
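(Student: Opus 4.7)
The plan is to adapt the semigroup construction of Section~\ref{sec:ProofMain1a} to asterisks by using a finite subset of $S_\lambda$ as arm lengths and realizing multiplication by $\lambda$ as an explicit concatenation of edge-path excursions. The decisive advantages over the interval case are that edges need not sit in a linear order, and the realizability parity condition is much weaker than the Hamiltonian-path condition of Proposition~\ref{IncidenceMatrixCharacterization}: it is enough that each row of the incidence matrix (giving the multiplicities with which $f(e_i)$ crosses each $e_j$) has at most one odd entry, so that a walk from the center to some vertex traversing each edge the prescribed number of times exists.

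First I would choose a finite set $G = \{g_1,\dots,g_n\} \subset S_\lambda$ satisfying two extra properties: (i) for each $i$, either $\lambda g_i \equiv 0 \pmod{2 O_\lambda}$, or $\lambda g_i \equiv g_{k(i)} \pmod{2 O_\lambda}$ for some $g_{k(i)} \in G$; and (ii) each $g_i$ is deep enough in $KR_\lambda$ that
\[
\lambda g_i - \varepsilon_i\, g_{k(i)} - 2\sum_{j=1}^n g_j \;\in\; KR_\lambda,
\]
where $\varepsilon_i\in\{0,1\}$ records the case of (i). Property (i) is secured by adjoining representatives (which can be taken in $S_\lambda$) of the finitely many congruence classes in $O_\lambda/2O_\lambda$ that arise. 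Property (ii) is secured by replacing $G$ by a generating set for the subsemigroup $\lambda^N S_\lambda$ for $N$ large: by the attractor property $\lambda K_\lambda \subset \interior K_\lambda$ from Section~\ref{sec:ProofMain1a}, elements of $\lambda^N S_\lambda$ can be made to have their $\lambda$-component dominating the other places by any prescribed amount.

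Next, build the $n$-pointed asterisk with arm $e_i$ of length $g_i$ in the real embedding $V_\lambda\to\R$ sending $\lambda\mapsto\lambda$. Property (ii) lets me write
\[
\lambda g_i \;=\; \varepsilon_i\, g_{k(i)} + 2\sum_{j=1}^n c_{ij}\, g_j, \qquad c_{ij} \ge 1,
\]
and I define $f$ on $e_i$ as the edge-path starting at the center that performs $c_{ij}$ round-trip excursions out to the tip of $e_j$ and back (for each $j$), concluded, when $\varepsilon_i=1$, by a one-way trip out to the tip of $e_{k(i)}$. The total combinatorial length is $\lambda g_i$, so $f$ has constant slope $\lambda$; the center is fixed by construction. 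The incidence matrix entries $a_{ij} = 2 c_{ij} + \varepsilon_i\,\delta_{j,k(i)} \ge 2$ are uniformly positive, so $A$ is strictly positive and in particular mixing. The freedom to reorder excursions allows me to arrange, for each $k$, that $f(e_k)$'s path begins with the excursion along $e_k$, giving the ``every edge is a first element'' condition. Turning points of $f$ occur only at parameter values mapping to an excursion tip, or (if two consecutive excursions happen to use the same arm) to the center; in every case the critical value is a vertex, and since the vertex set (center and $n$ tips) is finite and $f$-invariant ($f$ fixes the center, and each tip maps to either the center or to another tip), $f$ is postcritically finite.

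The hard part will be Step~(ii) above: arranging enough slack in $KR_\lambda$ for every generator to absorb the bounded padding $2\sum_j g_j$, which is what simultaneously yields mixing and lets me prescribe the first edge. Without this depth the incidence matrix is not forced to be strictly positive, and the ``every edge is first'' condition becomes combinatorially constrained. The resolution is exactly the attractor property exploited in Section~\ref{sec:ProofMain1a}: iterating $\lambda$ drives the generating set arbitrarily deep into the rational polyhedral cone.
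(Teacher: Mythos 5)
Your single-layer construction runs into a quantitative obstruction that the paper's proof is specifically designed to avoid. You insist on $c_{ij} \geq 1$ for all $i,j$, so every entry of the $n\times n$ incidence matrix $A$ satisfies $a_{ij} \geq 2$, where $n = |G|$. But the vector of arm lengths $(g_1,\dots,g_n)$ (in the real place sending $\lambda\mapsto\lambda$) is then a strictly positive eigenvector of $A$ with eigenvalue $\lambda$, so $\lambda$ is the Perron eigenvalue of $A$, and hence
\[
\lambda \;=\; \rho(A) \;\geq\; \min_i \sum_j a_{ij} \;\geq\; 2n.
\]
Since $G$ must generate $S_\lambda$ and surject onto $O_\lambda/2O_\lambda$, one has $n \geq 2^{\deg\lambda}-1$, so this fails for every Perron number $\lambda$ that is not enormous relative to its degree. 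Moreover, the cure you propose — replacing $G$ by a generating set of $\lambda^N S_\lambda$ — is a rescaling that leaves the offending inequality intact: if $g_i' = \lambda^N g_i$, then $\lambda g_i' - \varepsilon_i g_{k(i)}' - 2\sum_j g_j' = \lambda^N\bigl(\lambda g_i - \varepsilon_i g_{k(i)} - 2\sum_j g_j\bigr)$, and membership in the cone $KR_\lambda$ is invariant under multiplication by $\lambda^N$. ``Driving $G$ deep into the cone'' changes the shape of the generators (their $\lambda$-component dominates the other places more strongly) but does nothing to the row-sum constraint $\lambda \geq 2n$, which lives entirely in the $\lambda$-place.

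The paper avoids this by \emph{not} trying to realize $\lambda$ directly on a strictly positive matrix. Instead, it raises $\lambda$ to a high power $\lambda^N$ (with $N$ chosen so $\lambda^N g - g - 2(T+\lambda g)\in S_\lambda$ and $\lambda^N\equiv 1 \pmod{2O_\lambda}$) and then threads a tower of depth $N-1$: the asterisk tips are indexed by $G\times\{1,\dots,N-1\}$, each edge $(g,i)$ with $i<N-1$ maps \emph{homeomorphically} to $(g,i+1)$ (contributing a row with a single entry $1$), and only the top-layer edges $(g,N-1)$ map to long edge-paths. Lengths are graded as $\lambda^i g$, so the whole thing is a $\lambda$-uniform expander whose incidence matrix has Perron eigenvalue $\lambda$ (not $\lambda^N$); mixing is then proved by a propagation argument (after $n$ iterates the image of $(g,i)$ meets all of layer $i$, after $2n$ iterates all of layers $i$ and $i+1$, etc.), rather than by strict positivity. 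Your parity bookkeeping and the observation that an asterisk incidence matrix needs only ``at most one odd entry per row'' are fine, and your argument for postcritical finiteness is correct; but the single-layer scheme with $a_{ij}\geq 2$ cannot be made to close, and the tower/shift structure is not an optimization — it is the step that makes the eigenvalue come out to $\lambda$ at all.

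Separately, your two-phase plan for choosing $G$ has a mild ordering problem: replacing $G$ by $\lambda^N G$ changes residues mod $2O_\lambda$ (multiplication by $\lambda$ need not be invertible on $O_\lambda/2O_\lambda$, e.g.\ $\lambda=2$), so property (i), secured first, can be destroyed by the replacement securing (ii). This one is repairable (e.g.\ perturb representatives by elements of $2S_\lambda$ rather than rescaling), but it's another place where the interaction between depth and parity needs to be handled simultaneously, as the paper's choice of a single $N$ satisfying both $\lambda^N\equiv 1\pmod 2$ and the cone-depth condition does.
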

A map is \emph{topologically transitive} if there are dense [forward] orbits under $f$.  

\begin{remark}
Any expanding self-map of a tree can be promoted to a self-map of a Hubbard tree by adding extra information as to a planar embedding, and choosing a branched covering map for the planar neighborhood of each vertex that acts on its link in the given way. Each such promotion is the Hubbard tree for a unique polynomial up to affine automorphism.
\end{remark}

\begin{proof}
This is similar to the proof for self-maps of the interval. In principal
it is easier because there is no order information to worry about, but we will
use a very similar method.

An incidence matrix for
a self-map of an asterisk of the given form is a non-negative integer matrix that has exactly one odd entry in each row and each column.  Given any such
matrix, we can construct a corresponding self-map of an asterisk by permuting the tips according to the matrix mod 2, which is a permutation matrix, and running each edge out and back various edges to generate the even part of the matrix.

As before, find a subsemigroup $S_\lambda$ of $O_\lambda$ that excludes 0 and is invariant 
under multiplication by $\lambda$. 

Let $G$ be a finite set of generators 
for $S_\lambda$ that maps surjectively to $O_\lambda / (2 O_\lambda)$.
Let $T = \sum_{g \in G} g$.

Let $n$ be an integer such that  $\lambda^n- 1$ is
congruent to 0 mod 2, and for each $g \in G$,
$\lambda^n * g $
is contained in $g +2(T + \lambda * g) + S_\lambda$.

Now make an asterisk whose points are indexed by $G \times \{1,2,\dots, n-1 \}$.
Map the edge to point $(g, i)$ homeomorphically to the edge to
point $(g, i+1)$ when $i < n-2$.  

The final set of tips $(g,n-1)$ will map to tips $(g,1)$.
For each $g$, $\lambda^n * g - g$ is congruent to 0 mod $2O_\lambda$, and  so can be expressed as a strictly positive even linear combination of
$G \cup \{\lambda*  g\}$.  

Use these linear combinations to construct an asterisk map.  Each edge to the first set of tips maps homeomorphically to a new edge for the first $n-1$ iterates. On the $n$th iterate, it maps to a path that makes at least one round trip to all the first layer points as well as one second layer point, finally ending back where it started.  We can easily arrange the order of traversal so
that every edge is represented in the first segment of one of these edge paths.

If the asterisk is given a metric where edge $(g,i)$ has length equal to the
value of $\lambda^i g$ in the standard embedding of $O_\lambda$ in $\R$ where $\lambda$ is the Perron number, this map is a $\lambda$-uniform expander.  It is mixing: by the $n$th iterate, the image of an edge with index $(g,i)$ contains
all generators of the form $(*,i)$; by the $2n$th iterate, the image of an edge 
contains all generators of the form $(*,i)$ and $(*, i+1)$. After $n^2$ iterates, the edge maps surjectively to the entire asterisk.
\end{proof}

This construction was intended to avoid the need for complicated conditions and 
bookkeeping.
It's clear that a more careful construction could produce suitable
asterisk maps for a typical Perron number $\lambda$ that are much smaller (but still might be quite large). 

\section{Entropy in bounded degree}\label{sec:EntropyinBounded}

The constructions for maps of given entropy have been very uneconomical with the complexity of the maps. First, there is a potentially dramatic (but sometimes unavoidable) blowup in going from a Perron number $\lambda$ to a finite set of generators for a subsemigroup $S_\lambda$ of algebraic integers invariant by multiplication by $\lambda$. Even then, there is another possibly large blowup in finding a power of $\lambda$ such that $\lambda^N * S_\lambda$ is sufficiently deep inside $S_\lambda$ to guarantee an easy construction of a suitable incidence matrix.   In other words:  unlike Pisot numbers, a typical Perron number is probably unlikely to be the growth constant for a typical postcritically finite map of an interval to itself.

\begin{figure}[htpb]
\centering
\includegraphics[width=4in]{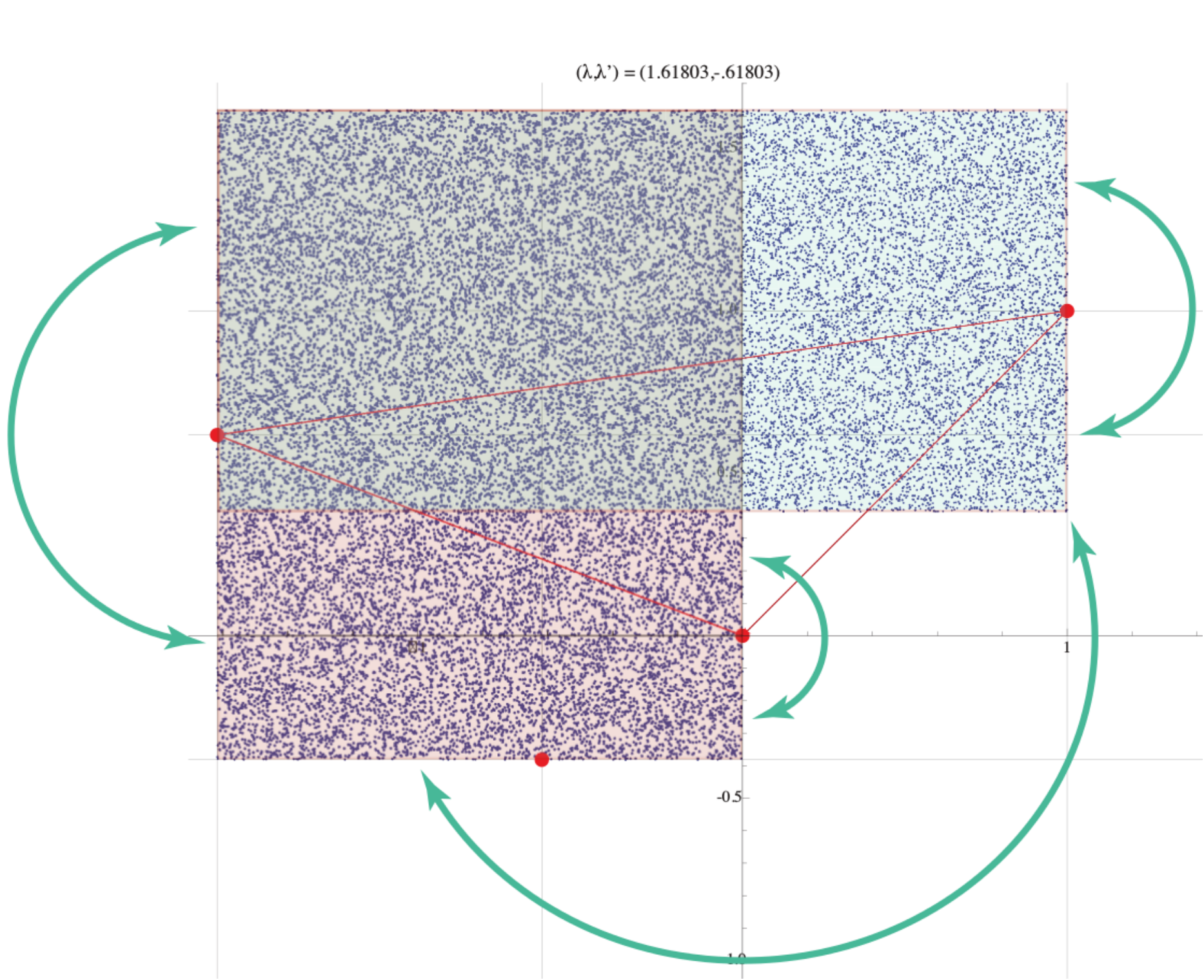}
\caption{This diagram shows $L(f^{1+\sigma})$ where
$\lambda = 1.61803\dots$, the golden ratio, is a
Pisot number, and $\sigma(\lambda) = -1/\lambda$. It was drawn by taking a random point near the origin, first iterating it 15,000 times and then plotting the next 30,000 images. The dynamics is hyperbolic and ergodic, so almost any point would give a very similar picture.
The critical point is periodic of period 3, and the 
limit set is a finite union of rectangles with a critical point on
each vertical side. The dynamics reflects the  light blue rectangle on
the upper right in a horizontal line, and arranges it as the pink rectangle
on the lower left. The big rectangle formed by the two stacked rectangles at left is reflected in a vertical line, stretched horizontally and squeezed vertically, and arranged as the two side-by-side rectangles on the top. Although the dynamics is discontinuous, the sides of the figure can be identified, as indicated (partially) by the green arrows to form a tetrahedron to make it continuous: see figure
\ref{fig:FoldedFibonacciTetrahedron}.
}
\label{fig:FoldedFibonacci}
\end{figure}
\begin{figure}
\centering
\includegraphics[width=3in]{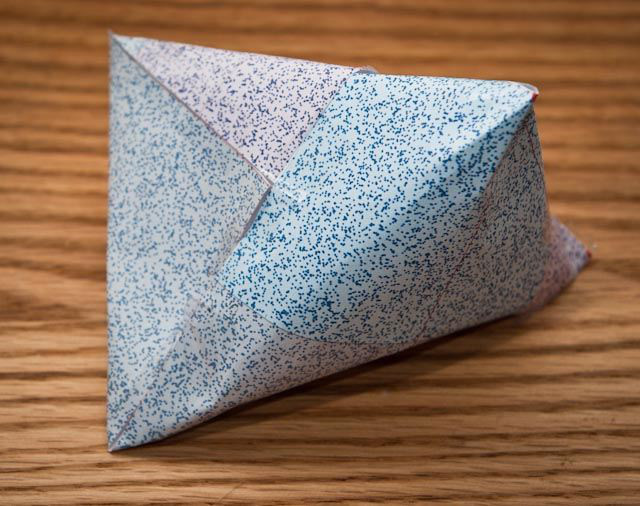}
\caption{ This is the diagram from figure \ref{fig:FoldedFibonacci} taped together into a tetrahedron, where the dynamics acts continuously (but reverses orientation). The map $f^{1+\sigma}$ acts on the tetrahedron as
an Anosov diffeomorphism. There are coordinates in which it becomes
the Fibonacci recursion  $(s,t) \mapsto (t, s+t)$, modulo a (2,2,2,2) symmetry group
generated by $180^\circ$ rotations about lattice points.}
\label{fig:FoldedFibonacciTetrahedron}
\end{figure}

In some sense, expansion constants for bounded degree systems are
almost Pisot: most of their Galois conjugates don't seem to wander very far outside the unit circle.  Figure \ref{fig:KneadingRoots} illustrates this point: most of the Galois conjugates of $\exp(h(f))$ for postcritically finite quadratic maps are in or near the unit circle. Since their minimal polynomials
have constant term
 $ \pm 1$ or $\pm 2$, the inside and outside roots are approximately balanced. If they don't wander outside the circle, they can't wander very far inside the circle, and most roots are near the unit circle.  In contrast, 
  figure \ref{fig:PerronPoints} illustrates that Perron numbers less
  than $2$ can have roots spread in the disk of their radius.

To get some understanding of what's going on, we'll translate questions about the distribution of roots into questions about dynamics of a semigroup of affine maps, elaborating on the point of view taken in the discussion of Pisot numbers 
in section \ref{sec:Pisot}.    We'll consider the sets of piecewise linear uniform expander functions $F(d, \epsilon)$ that take $\partial I $ onto $\partial I$, where $d>1$ is the number of
intervals on which the function is linear and $\epsilon = \pm 1$ determines
the sequence of slopes. $\epsilon (-1)^i \lambda$ is the sequence of slopes in
subintervals $i=0$ through $i=d$.
The constant terms for the linear function $f_i$ in
the first and last interval are determined by the condition that $f(\partial I) = \partial I$, implying that $f_0(0) = 0$ if $\epsilon = 1$ and $f_0(0) = 1$ when $\epsilon = -1$, with a similar equation for the last interval.  In all other subintervals, the constant term is a free variable $C_i$ subject to linear 
inequalities.  That is, the $i$th critical point $c_i$, determined by
$\epsilon (-1)^{i-1} \lambda c_i + C_{i-1} = \epsilon (-1)^i \lambda c_i + C_i$,
must be inside the unit interval,
so we have the inequalities

\[ 0 \le  \epsilon (-1)^{i-1} \frac {C_i - C_{i-1}} {2 \lambda} \le 1 .
\]

Now for any other field embedding $\sigma:
\Q(\lambda, C_1, \dots, C_{i-1})$ in $\C$, we can look at the collection
of image functions $f_i^\sigma$.  Since the choice of which $f_i$ to
apply is determined by inequalities, we will look at the product action.
Define $f^{1+\sigma}$ to act on
$I \times \C_\sigma$ by $f^{1+\sigma}: (x, z) \mapsto (f(x), f_i^\sigma(z))$
where $f_i$ is a linear piece that $f$ applies to $x$.  In the
ambiguous case where $x$ is one
of the critical points, this definition is discontinuous, so we will look
at both images, which are the limit from the left and the limit from the 
right.

Define the \emph{boundedness set} $B(f^{1+\sigma})$ to be the set of
$(x,z)$ such that the its orbit stays bounded.  If $f$ is postcritically
finite,  then the critical points in particular have bounded orbits, 
so in addition the full orbit of the critical points (under taking
inverse images as well as forward images) are bounded.  Define
the limit set $L(f^{1+\sigma})$ to be the smallest closed set containing
all $\omega$-limit sets for $(x,z)$.
\begin{figure}[htpb]
\centering
\includegraphics[width=\textwidth]{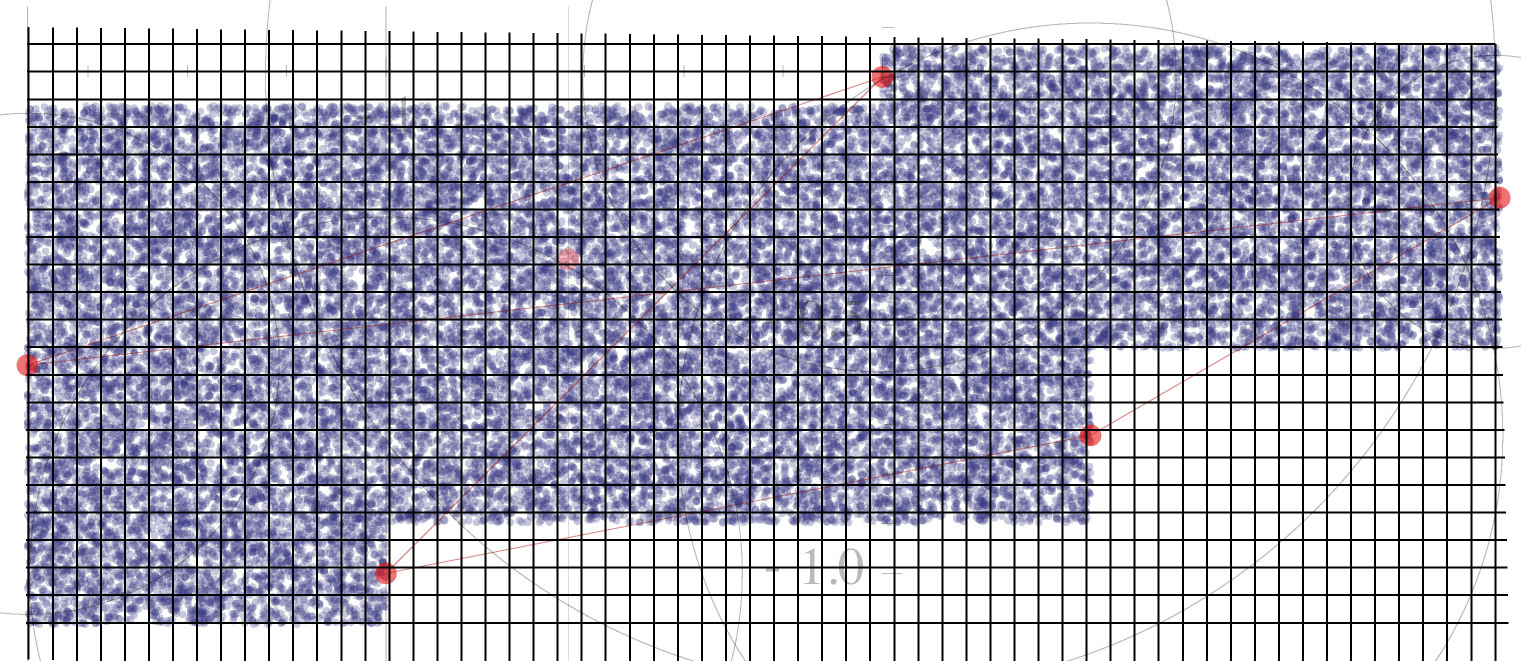}
\caption{This diagram shows $L(f^{1+\sigma})$ where
$\lambda = 1.7220838\dots$
is a Salem number of degree 4 satisfying $\lambda^4-\lambda^3-\lambda^2-\lambda+1=0$, and $\sigma(\lambda) = 1/\lambda$.
The critical point is periodic of period 5, and the 
limit set is a finite union of rectangles with a critical point on
each vertical side. The dynamics multiplies the left half (to the
left of the vertical line through the uppermost red dot)
by the diagonal matrix with entries $(-\lambda, -1/\lambda)$, then
translates until the lowermost red dot goes to the uppermost red dot.
The right half of the figure is multiplied by the diagonal matrix
$(\lambda, 1/\lambda)$, and translated to fit in the lower left.  As with
the example in figure \ref{fig:FoldedFibonacci}, it can be folded up, starting by folding the vertical sides at the red dots, to form (topologically) an $S^2$
on which the dynamics acts continuously, a pseudo-Anosov map of the $(2,2,2,2,2)$-orbifold.  This phenomenon has been explored, in greater generality, by Andr\'e de Carvalho and Toby Hall \cite{CH}, and it is also related to the concept of the dual of a hyperbolic groupoid developed by
Nekrashevych \cite{Nekrash}.
}
\label{fig:FoldedPolygon}
\end{figure}

There are three qualitatively different cases, depending on whether
$\abs {\sigma(\lambda) }$  is less than 1, equal to 1, or greater than 1.

In the first case that $\abs{\sigma(\lambda)} < 1$, every orbit remains
bounded in the $\C_\sigma$ direction, since the map  is the composition of 
a sequence
of contractions.  Everything outside a certain radius is contracted, so
the boundedness locus is compact.  Some special examples of this are illustrated
in figures \ref{fig:FoldedFibonacci}, \ref{fig:FoldedFibonacciTetrahedron} and
\ref{fig:FoldedPolygon}.

The second case, when $\abs{\sigma(\lambda)} = 1$, seems hardest to understand,
since the $f_i^\sigma$ act as isometries. Perhaps the limit set in these cases
is all of $\C$.

In the third case, when $\abs{\sigma(\lambda)} > 1$, the maps $f_i^\sigma$ are expansions, so there is a radius $R$ such that everything outside the disk $D_R(0)$ of radius $R$ centered at the origin escapes to $\infty$. Given $x \in [0,1]$,
the only $z$ such that $f^{1+\sigma}(x,z)$ has second coordinate inside $D_R(0)$ are inside a disk of radius $R/\sigma(\lambda)$ about the preimage of 0.  Starting far along in the sequence of iterates and working backwards to the beginning, we find a sequence of disks shrinking geometrically by the factor $1/\sigma(\lambda)$ that contain all bounded orbits. In the end, there's a unique point $b(x) \in \C$ such that $(x, b(x))$ remains bounded. The point $b(x)$ can also be expressed as the sum of a power series in $\sigma(\lambda)^{-1}$ with bounded coefficients that depend on the $C_i$ and  the kneading data for $x$, matching with formulas from \cite{MilnorThurstonKneading}.

\begin{proposition} \label{BoundednessContinuity}
If $f$ is postcritically finite and if $\abs{\sigma(\lambda)} > 1$, then $b(x)$
depends continuously on $x$.
\end{proposition}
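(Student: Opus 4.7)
The plan is to write $b(x)$ as an explicit, absolutely convergent series in the itinerary of $x$, and then to use postcritical finiteness to glue the two one-sided limits at each critical point.

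Writing each linear piece as $f_i(x) = \epsilon_i \lambda x + C_i$, setting $s_i := \epsilon_i \sigma(\lambda)$, and letting $i_0 i_1 i_2 \dots$ denote the itinerary of $x$ (so $f^k(x)$ lies in piece $i_k$), the bounded-orbit recursion $z_{n+1} = s_{i_n} z_n + \sigma(C_{i_n})$ together with the hypothesis $|s_i|=|\sigma(\lambda)|>1$ forces
\[
b(x) \;=\; -\sum_{j=0}^{\infty} \frac{\sigma(C_{i_j})}{s_{i_0} s_{i_1} \cdots s_{i_j}},
\]
a series that converges absolutely and uniformly in the itinerary, and a shift calculation gives the functional equation $b(f(x)) = s_{i(x)}\,b(x) + \sigma(C_{i(x)})$. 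Since two nearby points share arbitrarily long itinerary prefixes whenever both forward orbits avoid the critical set, the tail bound $O(|\sigma(\lambda)|^{-N})$ already gives continuity of $b$ at every $x$ outside the countable grand orbit $E$ of the critical set of $f$.

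The heart of the argument is that postcritical finiteness pins down $b$ on the finite postcritical orbit $O$. A one-line computation shows
\[
f^{1+\sigma}(x,\sigma(x)) \;=\; (f(x), \sigma(f(x))),
\]
so the diagonal $\{(x,\sigma(x))\}$ is invariant under $f^{1+\sigma}$. For $x_0 \in O$, the forward orbit of $(x_0,\sigma(x_0))$ stays in the finite set $\{(y,\sigma(y)) : y \in O\}$ and is therefore bounded; uniqueness of the bounded point in each fiber gives $b(x_0) = \sigma(x_0)$. At a critical point $c$, writing $i$ and $i+1$ for the pieces on either side of $c$, continuity of $f$ at $c$ gives $C_i + C_{i+1} = 2 f(c)$ and $s_{i+1}=-s_i$, and the functional equation yields
\[
b(c^-) \;=\; \frac{b(f(c)) - \sigma(C_i)}{s_i},\qquad b(c^+) \;=\; \frac{b(f(c)) - \sigma(C_{i+1})}{s_{i+1}}.
\]
A short algebraic manipulation shows these agree precisely when $b(f(c)) = \sigma(f(c))$, which holds because $f(c) \in O$.

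Finally I would propagate continuity to all of $E$ by induction on the least $n$ with $f^n(x)$ critical. At a non-critical $x$ the symbol $i(x)$ is locally constant, so the functional equation in the form $b(x) = (b(f(x)) - \sigma(C_{i(x)}))/s_{i(x)}$ transports continuity from $f(x)$ to $x$; the base case is the critical-point argument just given, rooted at the identity $b(y) = \sigma(y)$ on $O$. The only place postcritical finiteness is really used — and the main technical hurdle — is securing $b(f(c)) = \sigma(f(c))$: without invariance of the diagonal together with finiteness of $O$, there is no mechanism to force the two kneading tails meeting at a critical point to assemble into a single value, and the series formula alone would leave $b$ discontinuous on all of $E$.
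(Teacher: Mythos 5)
Your proof is correct and takes essentially the same approach as the paper: you establish $b(c_i)=\sigma(c_i)$ by observing that the diagonal $(x,\sigma(x))$ is invariant and the postcritical set is finite (the paper states this as the preperiodicity identity for the $f_i$ surviving under $\sigma$), and you then use the geometrically convergent series together with matching of one-sided limits at precritical points. The explicit functional-equation computation you give at critical points is just a spelled-out form of the paper's terser ``the limits from the two sides coincide at precritical points, since they coincide for critical points.''
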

\begin{proof}
When $f$ is postcritically finite, the preperiodicity of $f$ at any critical point $c_i$ is equivalent to an identity among compositions of the $f_i$. Therefore
$f^{1+\sigma}$ satisfies the same identity when applied to $(c_i, \sigma(c_i))$,
therefore its orbit is bounded so $b(c_i) = \sigma(c_i)$.

Since $b(x)$ is the sum of a geometrically convergent series, the value depends continuously on the coefficients. The coefficients change continuously except where $x$ is a precritical point; but the limits from the two sides coincide at
precritical points, since they coincide for critical points.  Therefore $b(x)$
is continuous.
\end{proof}

When $f$ is not postcritically finite, $b(x)$ might not be continuous; there could well be different limits from the left and from the right at critical points, and therefore at precritical points: the natural domain for $b(x)$ 
in general is a Cantor set obtained by cutting the interval at the countable dense set of precritical points. For example,
if $\lambda$ is transcendental, then $\sigma$ could send it to any other transcendental number, and it's obvious that generically $b(x)$ would not be continuous.

This phenomenon points to the inadequacy of considering only the algebraic properties of $\sigma$ when our real goal is to control the geometry. Here is
a formulation of the appropriate condition, in a slightly bigger context:

\begin{defn}
Suppose  $f \in F(d, \epsilon)$, 
$V$ is a vector space  and $g: [0,1] \times V \to [0,1] \times V$ 
that has the form 
\[g: (x, v) \mapsto (f(x),  \epsilon (-1)^{i} A(v) + C_i)
\]
when $c_{i} < x < c_{i+1}$,
where $A$ is an expanding map and the $C_i$ is a constant vector.
Then $g$ is a \emph{friendly extension of $f$}
if there is a continuous map $x \mapsto b(x)$ such that the orbit of $g(x, b(x))$
is bounded.
\end{defn}

\begin{figure}[htpb]
\centering
\includegraphics[width=4.5in]{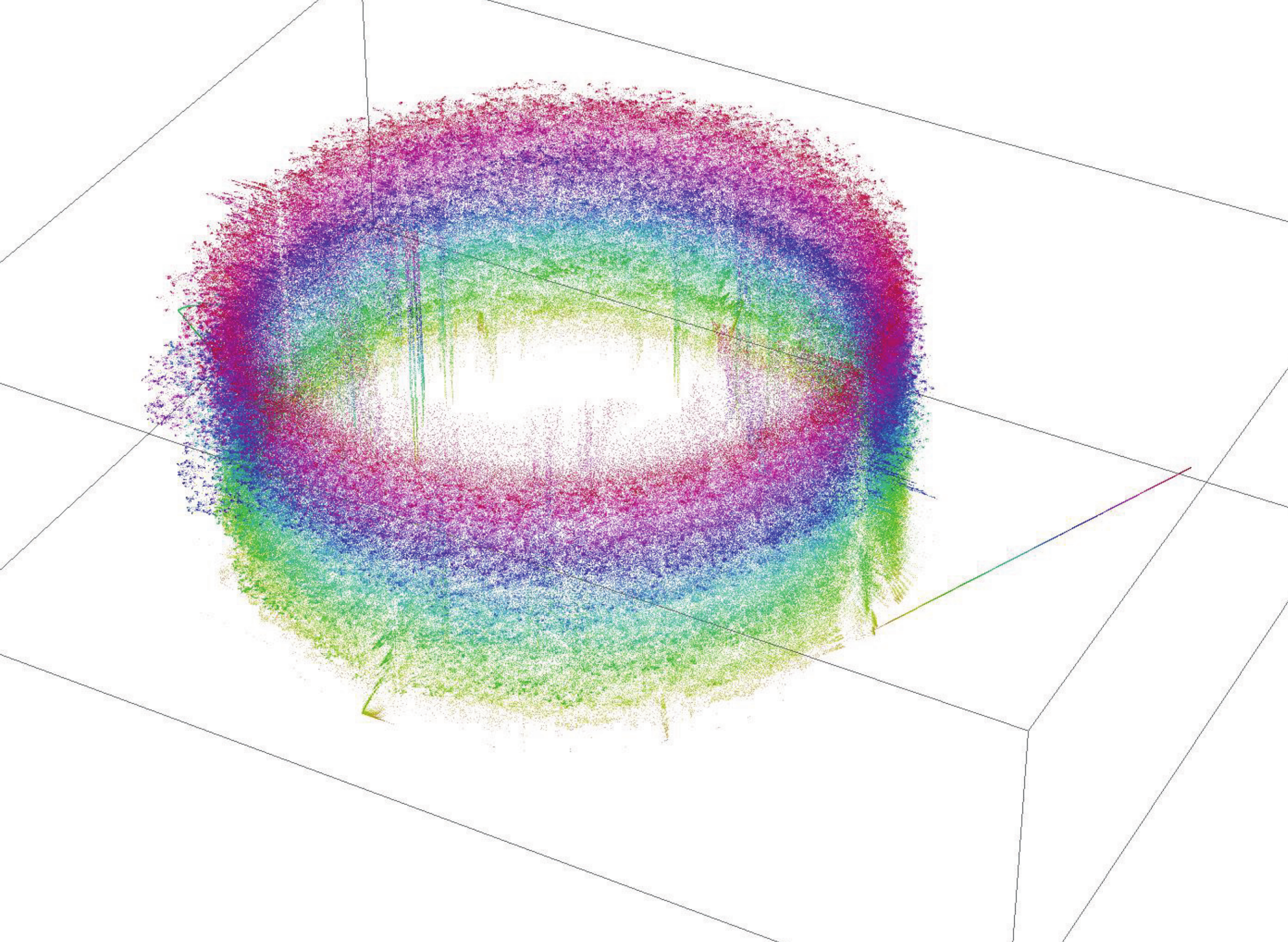}
\caption{
This picture shows the parameter values for friendly extensions of tent maps
(the space $F(1,2)$). The vertical direction is the $\lambda$-axis. The horizontal plane is $\C$, and the points shown are roots of minimal polynomials for postcritically finite examples. The points are color-coded by value of $\lambda$ (height). With a finer resolution and expanded vertical scale, the
friendly extensions would appear as a network of very frizzy hairs (usually of Hausdorff dimension $> 1$), sometimes
joining and splitting, but always transverse to the horizontal planes.
The diagonal line on the right is $\lambda$ itself.
}\label{fig:KneadingFriends}
\end{figure}

Since postcritically finite systems are dense in $F(d, \epsilon)$, there are 
many postcritically finite examples, and many have Galois conjugates of $\lambda$
outside the unit circle. Since boundedness depends
continuously on the kneading data, the coefficient $\lambda$ and the constant
terms $C_i$, we can pass to limits of postcritically finite systems;
if we take a convergent sequence of postcritically finite systems having a convergent sequence of friendly extensions, then the limit
is also a friendly extension. 

In the case $d = 2$, these correspond to the
kneading roots which were collected in figure \ref{fig:KneadingRoots}. Note that even for a $\lambda$ for which $x \mapsto \lambda \abs x - 1$ is postcritically finite, it is a limit of $\lambda_i$ of much higher degree for which  $x \mapsto \lambda_i \abs x - 1$ is also postcritically finite; in fact, it is a limit of cases where the critical point is periodic. The minimal polynomials for these nearby polynomials can be of much higher degree, so there can be many more friendly extensions than just the ones associated with the roots of the minimal polynomial for $\lambda$.
Figure
\ref{fig:KneadingFriends} is a 3-dimensional figure of friendly extensions, where the vertical axis is the $\lambda$ direction, and the horizontal direction
is $\C$; the plotted points outside the unit cylinder are expansion factors for 
friendly extensions.

Figure \ref{fig:ThinKneadingSlice} is a very thin slice of the set depicted in 
figure \ref{fig:KneadingFriends}, halfway up and of thickness $10^{-9}$. A movie
made from frames of this thickness, at 30 frames per second, would last a year.
This slab is thin enough to freeze the motion of 7 isolated friendly parameters, which you can see around the periphery. One of the 7, at position 1.5, is the original controlling expansion constant $\lambda$. Closer to the unit circle, the friendly parameters are packed closer together, and they move so quickly that they blur together into a big cloud.

\begin{figure}[htpb]
\centering
\includegraphics[width=4in]{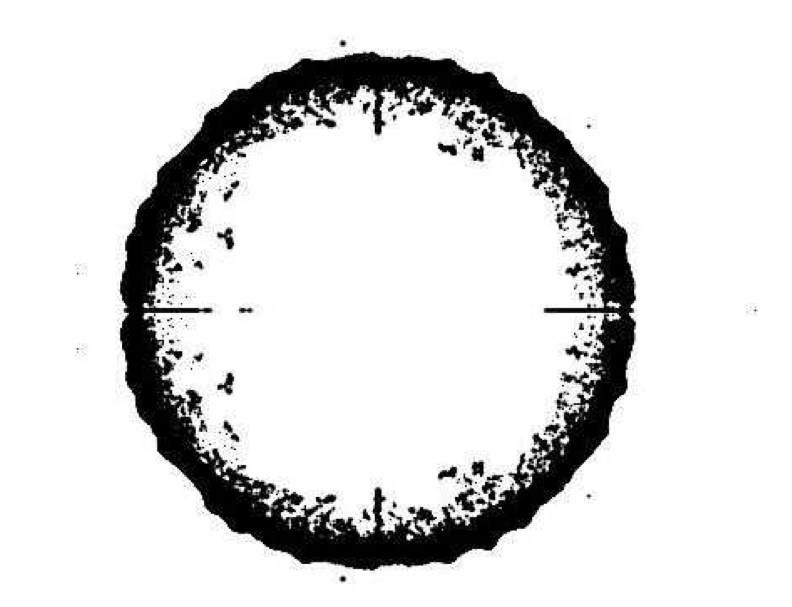}
\caption{
This image shows roots of 20,000 postcritically finite polynomials of degree about 80 in a thin slab, $1.500000059 \le \lambda < 1.500000060.$ There are 7
isolated spots visible outside the unit circle, with the smallest at 1.5. This
 slice of expansion factors of length $10^{-9}$ is thin enough to confine one friendly extension to stay within  each spot as $\lambda$ changes. Roots closer to the unit circle move quite fast with $\lambda$, so that even 
 in this short interval the closer roots wander over large areas that overlap and sometimes perhaps collide and split.  Roots in the closed unit disk do not depend continuously on $\lambda$, but they are confined to (and dense in) closed sets that include the unit circle and increases monotonically with $\lambda$, converging at $\lambda=2$ to the inside portion of figure \ref{fig:KneadingRoots}.
}
\label{fig:ThinKneadingSlice}
\end{figure}

\begin{remark}
The ``set of friends'' changes continuously, regarded as an atomic measure on the parameter space for candidate systems, in the weak topology.
\end{remark}

\section{Train Tracks}\label{sec:TrainTracks}

We have defined a train track map of a graph to be a map such that each edge is mapped by a local embedding under all iterates; 
now we'll define a train track structure:

\begin{defn}
A train track structure $\tau$ for a graph $\Gamma$ is a collection of 2-element subsets of the link of each vertex, called the set of legal turns.
\end{defn}

\begin{figure}[htbp] 
\centering\includegraphics[width=5in]{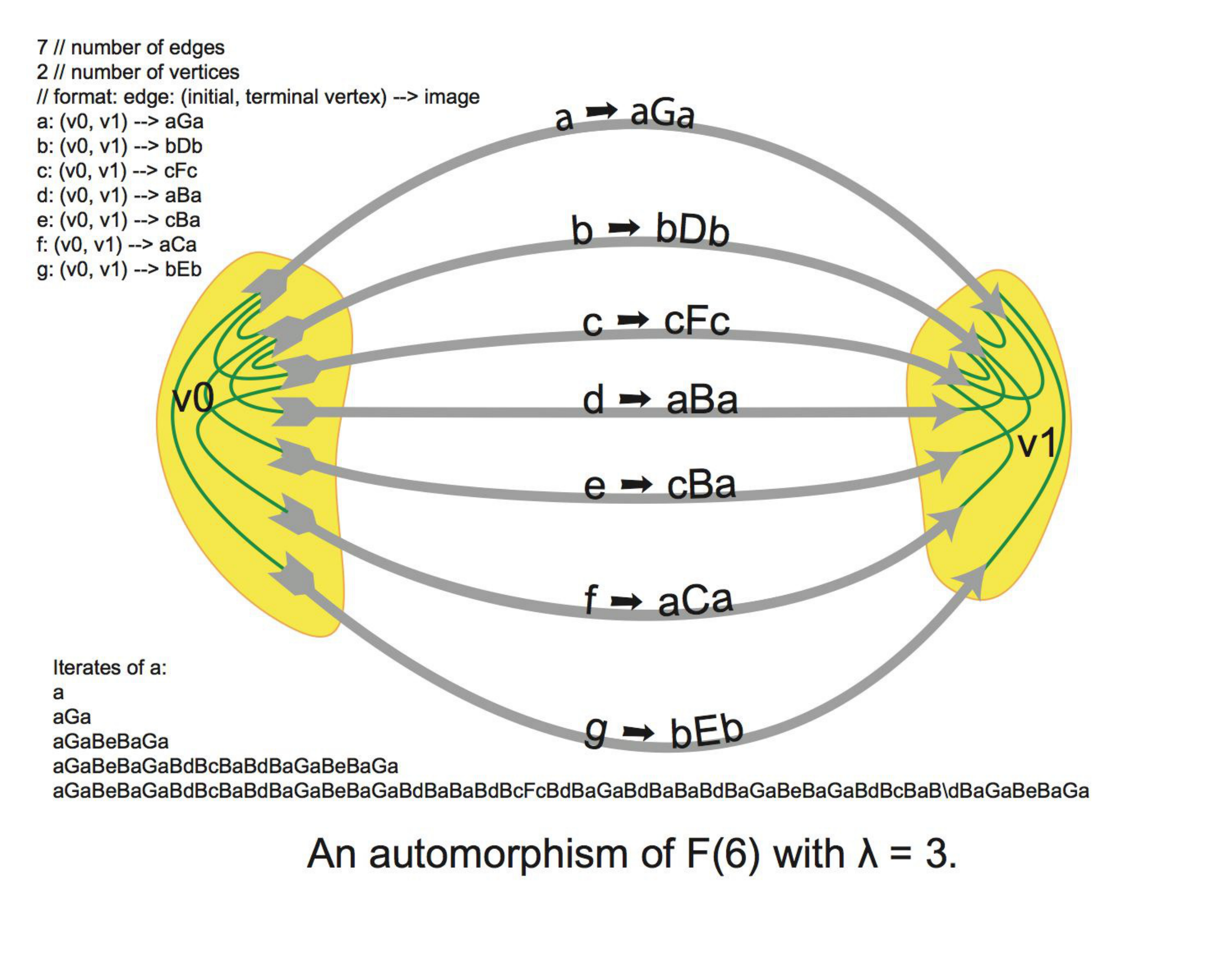}
\caption{This is a train track preserved by an automorphism of the free group of rank 6 with expansion constant 3. The edges are labeled with letters, where a capital letter indicates the reverse direction; one set of generators is
$\{bA,cA,dA,eA,fA,gA\}$.
The two vertices $v_0$ and $v_1$ are `exploded' to show legal turns, which are marked in green. This example will be used centrally for a  template to constructing train track maps
of expansion factor rate all possible Perron numbers.}
\label{fig:AutomorphismX3}\
\end{figure}
The mental image is that of a railroad switch, or more generally a switchyard, where for each incoming direction there is a set of possible outgoing directions where trains can be diverted without reversing course.  A path on $\Gamma$ is \emph{legal} if it is a local embedding, and at each vertex it takes a legal turn.

In describing edge paths, we must first choose an orientation for each edge.
We use the convention that a lower case letter denotes the forward direction on the edge, and the corresponding
capital letter to denote the backward direction on the edge.  This convention also applies to generators of groups (which we can think of as edges in a cell  complex having the given group as its fundamental group).

A map $f: \Gamma \rightarrow \Gamma$ \emph{preserves} $\tau$ if $f$ maps every $\tau$-legal path to a $\tau$-legal path.   It follows that $f$ is a train track map: since the forward images of edges must always be legal paths, in particular they are mapped by local embeddings.

For any train track map $f$, there is always at least one train track structure that $f$ preserves.  The \emph{minimal invariant train track structure} allows turns only if they are ever taken by the forward image of some edge of the graph. The \emph{maximal invariant train track structure} allows all turns that are never folded by iterates of $f$, that is, it allows any turn that is always mapped to be locally embedded.

There is an interesting special case of train track structure for
a bouquet of circles:
the positive train track structure, where the legal paths correspond to paths that are positive words in the generators of the free group. These paths follow a consistent orientation along the circles.   More generally, a train track structure is \emph{orientable} if there is no legal path that crosses an edge twice in opposite directions; orientability holds if an orientation of the edges can be chosen so that all legal paths maintain a consistent direction (but the converse need not hold).


An oriented train track on a graph $\Gamma$ defines a convex cone  $C$ in $H^1(\Gamma, \Z)$. For an automorphism $\phi$ that preserves such a structure, $H^1(\phi, \R)$ takes $C$ to $C$ or to $-C$. Therefore, $\pm \phi$ has an
eigenvector inside $C$. Just as in the Perron-Frobenius theorem, an eigenvector
strictly inside $C$ has the largest eigenvalue, and in any case, there is an eigenvector in the cone (possibly on its boundary) whose eigenvalue is largest.
This eigenvalue is the same as the expansion constant for $\phi$, since the
incidence matrix for $\phi$ is the matrix for $\pm \phi$ acting on 
(simplicial) 1-chains of $\Gamma$. As an eigenvector for an element of $\GL(n,\Z)$, such an eigenvalue is always an algebraic unit.

As illustrated by the example of figure \ref{fig:AutomorphismX3}, the expansion constant for a train track map need not be a unit: the figure describes a map $\phi_3$ with expansion constant 3.  

To check that it is an automorphism, first write down the images of the generators
$\{bA,cA,dA,eA,fA,gA\}$ and collapse the $a$ edge to a point (thus striking out all $a$'s):

\begin{align}
\phi_3: b \mapsto & bDbg \cr
c \mapsto & cFcg \cr
d \mapsto & Bg\cr
e \mapsto & cBg\cr
f \mapsto & Cg\cr
g \mapsto & bEbg
\end{align}

Every surjective selfmap of a free group
is an automorphism, so it's enough to check that all 6 generators are in the image.  We get $c$ from $eD \mapsto c$, and given $c$ we get $g$ from $f \mapsto Cg$. Given $c$ and $g$, we get $b$ from the image of $E$ and $f$ from the image of $C$. Given these four generators we get the remaining two, $d$ from the image of $B$ and $e$ from the image of $G$.  Therefore, $\phi_3$ is a self-homotopy-equivalence of the graph, so it gives a train track map.

To see that the map preserves the train track structure, first note that every edge maps to a legal path.  Now note that the three edges $a, b, c$ each map to a path starting and ending in the same way, with $a, b$ or $c$ respectively.
All words only involving these three edges are legal for this train track.
The other four edges map to words beginning and ending with $a, b$ or $c$,
so it is easy to check that legal turns are mapped to legal turns.

This implies that there is no orientable train track structure invariant by
the automorphism up to homotopy.  For this particular example, the structure has
an anti-orientation, such that every legal path reverses orientation at every vertex.

\medskip
  For $m \ge 0$, a map $\phi_{3+2m}$ with expansion factor $3+2m$
can be defined, preserving the same train track:
\begin{align}
\phi_{3 + 2m}: a &\mapsto aG(aB)^m a \cr
b &\mapsto bD(bC)^m b\cr
c &\mapsto cF(cA)^m c\cr
d &\mapsto aB(aB)^m a\cr
e &\mapsto cB(aB)^m a\cr
f &\mapsto aC(aB)^m a\cr
g &\mapsto bE(bA)^m b  \label{PhiFormula}
\end{align}

A sequence of steps identical to that for $\phi_3$ shows that $\phi_{3+2m}$ is a
train track map.     For completeness, we can define $\phi_1$ as the identity map of this train track; $\phi_1$ is also a train track map.  There are  similar constructions for even expansion factors, but we do not need them.

\section{Splitting Hairs} \label{sec:SplittingHairs}

Let $f: \Gamma_0 \rightarrow \Gamma_0$ be any purely expanding selfmap of a tree.
Every tree is bipartite, so we can partition the vertices into to sets $V_0$ and
$V_1$ such that there are no edges of $\Gamma_0$ with both endpoints in one of the sets. 

The map $f$ may not respect this partition; if not, let $\Gamma$ be the barycentric subdivision of $\Gamma_0$, with one new vertex $v(e)$ in the middle of each edge $e$ of $\Gamma_0$. For each edge $e$ of the original $\Gamma_0$, choose one of the edges $e'$ in its image, and 
adjust $f$ so that $v(e) \mapsto v(e')$. The new map still has a non-negative incidence matrix. 
Any positive linear function that is an eigenvector for its transpose pulls back to a linear function that is an eigenvector for the transpose of the incidence matrix of the original $f$, so the eigenvalues are the same.
 Therefore, there are positions for the new vertices $v(e)$ so all vertices of $\Gamma$ are mapped (by the original $f$) to vertices of $\Gamma$.   The graph $\Gamma$ has a bipartite partition,
where $V_0$ consists of original vertices and $V_1$ consists of new vertices,
where each partition element is invariant by $f$.

Given any graph $\Gamma$ and an integer $n$, define a new graph $\Split_n(\Gamma)$ to be obtained by replacing each edge of $\Gamma$ with
$n$ edges having its same pair of endpoints.  For each edge $e$, label
the new edges with subscripts $e_a, e_b, \dots$.   

When $\Gamma$ has a bipartite structure $\{V_0, V_1\}$, define a train track structure on $\Split_7(\Gamma)$ using the prototype of figure \ref{fig:AutomorphismX3}: that is, a path is legal 
if and only if the sequence of subscripts defines a legal path in the prototype.

When $f$ is a map
of $\Gamma$ to itself that preserves the partition elements and maps each edge to an edge-path of length at least one, define 
\[\Split_7(f): \Split_7(\Gamma) \rightarrow \Split_7(\Gamma)
\]
using the prototypes $\phi_i$ defined in the preceding section \ref{sec:TrainTracks}.  That is, for an edge $e_x$ of $\Split_7(\Gamma)$,
if $f(e)$ has combinatorial length $k$, lift $f$ to the map $\Split_7(f)$ by applying
the sequence of subscripts $\phi_k(x)$ to the sequence of edges $f(e)$.

Note that the $\phi_{2m+1}$ themselves are defined by this process, starting with the self-maps of the unit interval that fold it over itself an odd number of times.

\begin{proposition}
For any bipartite structure $\{V_0, V_1\}$ on a graph $\Gamma$ and any map $f: \Gamma \rightarrow \Gamma$
that preserves the partition elements and maps each edge to an edge-path of length at least one, 
$\Split_7(f)$ is a train track map.
\end{proposition}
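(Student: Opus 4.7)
The plan is to show that $\Split_7(f)$ is a train track map by exhibiting an invariant train track structure: namely, the pull-back $\tau$ of the prototype structure of figure~\ref{fig:AutomorphismX3} through the subscript labeling, as described just above the statement. By the discussion opening Section~\ref{sec:TrainTracks}, preserving $\tau$ is sufficient, since $\tau$-legal paths are by definition immersions (backtracks are forbidden, as $\{x,x\}$ fails to be a $2$-element subset). Because $f$ preserves the bipartition $\{V_0,V_1\}$ and $\Gamma$ is bipartite, every edge-path $f(e)$ joins two vertices of opposite colors and hence has odd combinatorial length $k$, so only the odd prototypes $\phi_{2m+1}$ of Section~\ref{sec:TrainTracks} intervene in the definition of $\Split_7(f)$.

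The verification of $\tau$-invariance splits into two finite combinatorial checks. First, for each edge $e_x$, I would show that the image $\Split_7(f)(e_x)$ is $\tau$-legal. Writing $f(e)=e_1\cdots e_k$ and $\phi_k(x)=s_1\cdots s_k$, the image is $(e_1)_{s_1}\cdots(e_k)_{s_k}$, and by the definition of $\tau$ its legality reduces to $s_1\cdots s_k$ being a legal path in the prototype, which is exactly the assertion (already justified in Section~\ref{sec:TrainTracks}) that $\phi_{2m+1}$ preserves the prototype's train track structure. The bipartite parities align automatically since $f$ preserves the coloring and $k$ is odd, matching the $V_0\leftrightarrow V_1$ alternation in $\Gamma$ with the $v_0\leftrightarrow v_1$ alternation in the prototype.

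Second, for each $\tau$-legal turn $(e_x,e'_y)$ at a vertex $v$ of $\Split_7(\Gamma)$, I would show the induced turn at $f(v)$ is legal. The key observation, built into \eqref{PhiFormula}, is that for $m\ge 1$ every $\phi_{2m+1}(x)$ begins and ends with a letter in $\{a,b,c\}$, and as noted in Section~\ref{sec:TrainTracks} all turns among $\{a,b,c\}$ at either prototype vertex are legal. Hence the induced turn at $f(v)$, which is the pair (terminal letter of $\phi_{2m+1}(x)$, initial letter of $\phi_{2m'+1}(y)$), is automatically legal whenever both image lengths are at least three; the boundary case where one of the lengths equals $1$ (so that $\phi_1$ acts as the identity) can be verified directly from the explicit formulas.

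I expect the main obstacle to be the bookkeeping: matching the bipartition on $\Gamma$ with the $v_0\leftrightarrow v_1$ structure of the prototype, tracking edge orientations (an edge-path $f(e)$ can traverse an edge of $\Gamma$ in either direction, while \eqref{PhiFormula} fixes an orientation convention), and handling the boundary cases where some $f(e)$ has length $1$ while an adjacent $f(e')$ has length $\ge 3$. Once these conventions are pinned down, both verifications collapse to a finite check on the seven prototype edges, essentially repeating at greater generality the check already performed at the end of Section~\ref{sec:TrainTracks} for $\phi_3$.
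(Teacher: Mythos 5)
Your proposal is correct and follows essentially the same route as the paper: reduce $\tau$-invariance to two checks (edges map to legal paths via the already-verified invariance of the prototype under $\phi_{2m+1}$, and legal turns map to legal turns using the facts that every prototype legal turn touches $\{a,b,c\}$, that all $\{a,b,c\}$-turns are legal, and that $\phi_{2m+1}$ fixes the initial and terminal letters of $a,b,c$), while the bipartite hypothesis guarantees odd image lengths so only the odd prototypes are needed. You spell out the first check a bit more explicitly than the paper (which treats it as implicit from Section~\ref{sec:TrainTracks}), but the underlying argument is the same.
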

\begin{proof}
Notice that in the prototype train track, every legal turn has at least one of its ends among the edges $a,b,c$.  Any turns among these three edges are legal,
and every $\phi_{2m+1}$ preserves their beginnings and end. Every  $\phi_{2m+1}$
with $m > 1$ maps beginnings and ends in the same way, so a turn between edges whose combinatorial image length is more than 1 maps to a legal turn.  Similarly, a legal turn between edges one or both of which have combinatorial image length 1 maps to a legal turn.
\end{proof}

\begin{proposition} \label{LiftingTreeMaps}
If $\Gamma$ is a  tree with bipartite structure $\{V_0, V_1\}$,
and if $f: \Gamma \rightarrow \Gamma$ is self-map
such that 
\begin{itemize}
\item{f preserves $V_0$ and $V_1$, and}
\item{f is a local embedding on each edge, and}
\item{the map $f_1: e \mapsto f(e)_1$, that is, $e$ goes to the first element of its image edgepath, is a permutation}
\item{the map $f_2: e \mapsto f(e)_2$ if $f(e)$ has length more than 1, and $e \mapsto f(e)_1$ otherwise, is also a permutation}
\end{itemize}
then $\Split_7(\Gamma)$ is a homotopy
equivalence.
\end{proposition}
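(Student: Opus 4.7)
My plan is to reduce to a $\pi_1$-surjectivity statement (interpreting the conclusion as saying that $\Split_7(f)$, not the graph itself, is a homotopy equivalence). The space $\Split_7(\Gamma)$ is a finite graph, hence a $K(F,1)$ for a finitely generated free group $F$; finitely generated free groups are Hopfian, so a surjective self-endomorphism is automatically an automorphism, and any cellular self-map of a finite graph inducing a $\pi_0$-bijection and a $\pi_1$-isomorphism is a homotopy equivalence. Since $\Gamma$ is connected, $\Split_7(\Gamma)$ is connected, so the $\pi_0$ input is free, and the entire content becomes $\pi_1$-surjectivity.

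For the generators I would fix the spanning tree $T \subset \Split_7(\Gamma)$ to be the embedded copy of $\Gamma$ carried by the edges $\{e_a : e \in E(\Gamma)\}$. With this choice $\pi_1(\Split_7(\Gamma))$ is free on the loops $\gamma_{e,x}$ for $e \in E(\Gamma)$ and $x \in \{b,c,d,e,f,g\}$, each obtained by traversing $e_x$ and returning along $T$. Because $f$ preserves the bipartite classes and edges of $\Gamma$ cross between them, the combinatorial length of every $f(e)$ is odd, so only the prototypes $\phi_{2m+1}$ from Section \ref{sec:TrainTracks} enter the construction. Each $\phi_{2m+1}$ is itself $\pi_1$-surjective on the single-petal graph by the explicit back-substitution argument carried out for $\phi_3$ in Section \ref{sec:TrainTracks}, and (by inspection of the formulas \eqref{PhiFormula}) the first two letters of $\phi_{2m+1}(x)$ depend only on $x$, not on $m$.

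The global surjectivity then comes from combining the prototype surjectivity with the permutation hypotheses on $f_1$ and $f_2$. Given a target generator $\gamma_{e,x}$, let $e^{(1)} = f_1^{-1}(e)$ and $e^{(2)} = f_2^{-1}(e)$. The twelve images $\Split_7(f)(\gamma_{e^{(i)},y})$ ($i \in \{1,2\}$, $y \in \{b,\dots,g\}$) contain, as their leading two-edge combinatorial pattern inside the petal over $e$, precisely the pattern the prototype $\phi_{2m+1}$ produces on its single petal; their tails live in other petals and are glued from fixed prototype words, so they cancel against each other under the same back-substitution scheme used for $\phi_3$. Applying that scheme inside the petal over $e$ recovers $\gamma_{e,x}$.

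The main obstacle, and really the only place care is needed, is bookkeeping: one must track how a single letter $\phi_{2m+1}(x)_j$ appears at a different edge of $\Gamma$ for each $j$, and arrange the back-substitution so that only the petal over $e$ survives while everything at other petals cancels. The uniformity of the leading two letters of $\phi_{2m+1}(x)$ in $m$ is exactly what makes this arrangement possible edge-by-edge; once that is set up, the argument reduces to the single-petal automorphism verification for $\phi_{2m+1}$ already built into Section \ref{sec:TrainTracks}.
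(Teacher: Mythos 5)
Your outline is the paper's approach: reduce to $\pi_1$-surjectivity via the Hopfian property, use the $a$-subscript copy of $\Gamma$ as the spanning tree, observe that bipartiteness forces odd image lengths so only the $\phi_{2m+1}$ appear, observe that the first two letters of $\phi_{2m+1}(x)$ are independent of $m$, and lift the $\phi_3$ back-substitution. Those preliminary observations are all correct and match what the paper does.

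The gap is in how you run the back-substitution. You frame it as a per-target local argument: for a fixed $\gamma_{e,x}$, the twelve images over the two petals $e^{(1)}=f_1^{-1}(e)$ and $e^{(2)}=f_2^{-1}(e)$ are supposed to suffice, with the other-petal tails ``cancelling against each other.'' This fails. First, the claim that the leading two-edge pattern of each of the twelve images sits inside the petal over $e$ is false as stated: $\Split_7(f)(\gamma_{e^{(1)},y})$ has only its \emph{first} edge over $e$ (its second lies over $f_2(e^{(1)})$), and dually for $e^{(2)}$. Second, and more seriously, only the very first step of the $\phi_3$ back-substitution (producing $c$ from $eD$) is a genuine free-group cancellation --- and it works because $\phi_{2m+1}(e)$ and $\phi_{2m+1}(d)$ agree in every position past the first. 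Every subsequent step isolates its payload letter \emph{modulo generators obtained earlier}, not by cancellation, and in the multi-petal picture those earlier generators live over petals that are in general neither $e^{(1)}$ nor $e^{(2)}$. For instance, after striking the $a$-tree, $\Split_7(f)(\gamma_{x,c})$ is a word involving $c$-, $b$-, and $g$-generators over every edge of the edgepath $f(x)$, so the $f$-step requires $c$-generators over $f_1(x),f_3(x),\dots,f_k(x)$ with $k=|f(x)|$ possibly large. The correct argument, as in the paper, is a single global induction ordered by subscript: obtain $\gamma_{e,c}$ for every edge $e$ simultaneously, then modulo all of those obtain every $\gamma_{e,g}$ (using that $f_2$ permutes the edges), then every $\gamma_{e,b}$, and so on through the $\phi_3$ order, invoking the permutation hypotheses at each stage to ensure full coverage. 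Phrased that way your proof closes; the per-target local version you wrote down does not.
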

\begin{proof}
The set of edges with subscript $a$ forms a spanning tree for $\Split_7(\Gamma)$.
If we collapse the spanning tree to a point, we obtain a bouquet of circles, so the remaining edges give a free set of generators for the fundamental group.

Pick a basepoint $*$ on the graph $\Split_7(\Gamma)$.  As an edge path,
the generator corresponding to $e_x$ is obtained by prefixing $e_x$ by the
subscript $a$ path from $*$ to its first vertex, and appending the $A$ path from its end vertex back to $*$, then striking out all $a$'s and $A$'s.

As before, we just need to show that all generators are in the image of $\Split_7(f)$.  Consider the 6 generators lying over any particular edge $x$ of $\Gamma$.   Let $x'$ be the first edge in the edgepath $f(x)$.  We will follow the same outline that showed $\phi_3$ is a homotopy equivalence.  The image of $x_e X_D$ under $\Split_7(f)$ is $x'_c X'_A$ which becomes $x'_c$ after the collapse. 
Applying this to all edges $x$ of $\Gamma$, we obtain
all generators $x_c$ in the image.  Modulo the $x_c$ generators, from $\Split_7(f)(x_f)$ we get all generators of the form $f_2(x)_g$. Since $f_2$ is a permutation, this gives all subscript $g$ generators.  From $\Split_7(f)(X_D)$ modulo the $g$ generators, we get the $f_1(x)_b$ generators.  Continue in the same sequence that was used to show $\phi_3$ is surjective: modulo previous generators, each edge has a payload generator in the first or second slot of its image, so we get all generators.
\end{proof}

Given any Perron number $\lambda$, we can now apply proposition \ref{LiftingTreeMaps} to the asterisk map $f_\lambda$ constructed by theorem \ref{AsteriskMaps}, where we take $V_0$ to consist of the center vertex and $V_1$ to consist of all the tips. We obtain a train track map $\Split_7(f_\lambda)$.
The maps were constructed so that every edge occurs as the first segment of
some image edge-path. The same edge occurs as the second segment, if the path has length more than 1, so the maps $f_{\lambda,1}$ and $f_{\lambda,2}$ are both permutations.
Therefore, $\Split_7(f_\lambda)$ is  train track homotopy equivalence of $\Split_7(\Gamma)$ which uniformly expands with expansion factor $\lambda$.

The maps $\phi_{2m+1}$ are mixing when $m > 1$, from which it follows that $\Split_7(f_\lambda)$ is mixing.

If $\lambda$ is a weak Perron number, let $n$ be the least integer such that $\lambda^n$ is a Perron number.   Make an asterisk from $n$ copies of an asterisk for $\lambda^n$, and map it to itself by permuting the factors except for the final map, which is a copy of $f_{\lambda^n}$.   From this asterisk map
$f_\lambda$, we obtain a train track
automorphism $\Split_7(f_\lambda)$ with expansion constant $\lambda$.
This completes the proof of theorem \ref{Main2}.

\section{Dynamic Extensions} \label{sec:DynamicalExtensions}

If $f: X \rightarrow X$ is a continuous map, then an \emph{extension} of $f$ is a space $Y$ with a continuous surjective map $F: Y \rightarrow Y$ and a semiconjugacy $p: Y \rightarrow X$ of $F$ to $f$, that is, it satisfies $p\circ F = f \circ p$.

The hair-splitting construction of section \ref{sec:SplittingHairs} is a special case, which we'll call a \emph{graph extension}, where $X$ and $Y$ are graphs and edges of $Y$ are mapped to non-trivial edge-paths of $X$.   If $f$ is a $\lambda$-uniform expander, then so is $F$. Given two graph extensions $F_1$ and $F_2$, they have a fiber product $F_3$, consisting of all pairs of points in $\Gamma_1$ and $\Gamma_2$ that map to the same point in $\Gamma$.  We can pick a connected component of the fiber product to obtain a connected graph that is a common extension of both; thus the set of connected graph extensions is a partially ordered set where any two elements have an upper bound (but not necessarily a least upper bound).

It seems natural to ask which uniformly expanding selfmaps of graphs have graph extensions that are train-track self-homotopy-equivalences.  It would also be interesting to strengthen the condition, to require that the fundamental group of the extension graph maps surjectively to the fundamental group of the base; in that case, a necessary condition is that $f$ itself be a homotopy equivalence, otherwise neither it nor its extension would be surjective on $\pi_1$.  If necessary, we could also weaken the condition by allowing a subdvision of $\Gamma$ before taking the extension.

Although proposition \ref{LiftingTreeMaps} required that the first and second elements in the edgepaths for $f$ are partitions, this condition does not seem essential.  One trick would be to modify the formula of \ref{PhiFormula} to lift $f$, we could 
use lifts in patterns such as $b \mapsto b (Cb)^{m1} D(bC)^{m2} g$  and
$e \mapsto (aB)^{m1}c(Ba)^{m2}$ to adjust the payload edges to be somewhere else along the edgepath, taking care that the payload edges map surjectively, and that they are chosen so that there is an order that will unlock them inductively. 

It seems plausible that by a combination of duplicating edges, subdividing edges, and lifting with adjustable payloads, any expanding self-homotopy-equivalence of a graph would have an extension that is a train track map of a graph whose fundamental group maps surjectively to the base. However, it  is beyond the scope of the current paper to pursue this.

It would also seem interesting to understand a theory of minimal uniformly expanding maps, ones that cannot be expressed as non-trivial extensions. Perhaps extensions that merely identify vertices should be factored out. This should be related to a theory of finitely generated abelian semigroups that exclude 0 and are invariant by a transformation and which, tensored with $\R$, have $\lambda$ as dominant eigenvalue. {\em They can be thought of as a positive $\lambda$-modules}.

\section{bipositive matrices} \label{sec:BipositiveMatrices}

\begin{defn}
A pair of real numbers $(a,b)$ is a \emph{conjugate pinching pair} 
if $a$ and $b^{-1}$ are
algebraic units whose Galois conjugates are contained in the open annulus of inner radius $b^{-1}$ and outer radius $a$.   They are a \emph{weak conjugate pinching pair} if their conjugates are contained in the closed annulus.
\end{defn}

Here is an elementary fact:
\begin{proposition}
\label{GL expansions}
A real number is the expansion constant for an element $A$ of $\GL(n,\Z)$ for some $n$ if and only if it is a weak Perron number.

A pair $(a,b)$ of algebraic units occurs as the pair of expansion constants 
for $A$ and $A^{-1}$
if and only if the pair is a weak conjugate pinching pair.
\end{proposition}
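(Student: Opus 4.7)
The plan is to handle both statements by a companion-matrix construction. For the two ``only if'' directions, the key observation is that every eigenvalue $\mu$ of a matrix $A \in \GL(n,\Z)$ is an algebraic unit: the minimal polynomial of $\mu$ divides the characteristic polynomial of $A$, whose constant term is $\pm \det A = \pm 1$, so the norm of $\mu$ (the product of its Galois conjugates) divides $\pm 1$. Moreover, every Galois conjugate $\mu^\alpha$ is itself an eigenvalue of $A$, as another root of the same minimal polynomial. Applied to the expansion constant $\lambda = \rho(A)$, this immediately shows that every conjugate of $\lambda$ has modulus at most $\lambda$, exhibiting $\lambda$ as a weak Perron unit.

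For the ``if'' direction of the first statement, given a weak Perron unit $\lambda$, I would take $A$ to be the companion matrix $C_\lambda$ of the minimal polynomial of $\lambda$. Because $\lambda$ is a unit, the constant term of its minimal polynomial is $\pm 1$, so $\det C_\lambda = \pm 1$ and $C_\lambda \in \GL(d,\Z)$, where $d$ is the degree of $\lambda$; its eigenvalues are exactly the Galois conjugates of $\lambda$, and $\lambda$ itself realizes the spectral radius.

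For the pair version, the forward direction runs in parallel: if $a = \rho(A)$ and $b = \rho(A^{-1})$, then every eigenvalue $\mu$ of $A$ satisfies $b^{-1} \le |\mu| \le a$, and the Galois orbits both of $a$ (which is an eigenvalue of $A$) and of $b^{-1}$ (which is an eigenvalue of $A$, being the reciprocal of the spectral eigenvalue of $A^{-1}$) lie in this closed annulus. For the reverse, given a weak conjugate pinching pair $(a,b)$, I would form the block-diagonal matrix
\[
A \;=\; C_a \oplus C_{b^{-1}},
\]
the direct sum of the companion matrices of the minimal polynomials of $a$ and of $b^{-1}$. Both blocks lie in $\GL(\cdot,\Z)$ since $a$ and $b^{-1}$ are units. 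The eigenvalues of $A$ are the union of the two Galois orbits, and by the pinching hypothesis all of them lie in the annulus $[b^{-1}, a]$; the maximum $a$ is attained by the first block, and the minimum $b^{-1}$ by the second, so $\rho(A) = a$ and $\rho(A^{-1}) = b$.

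No serious obstacle arises here: the proposition is essentially a bookkeeping check about companion matrices and direct sums. The only point that deserves a moment's care is that $a$ and $b^{-1}$ really appear as honest eigenvalues of the constructed $A$ (not merely as values $|\mu|$ for some complex $\mu$), so the two spectral radii are genuinely attained by real positive eigenvalues and the phrase ``expansion constant'' is unambiguous in the sense used throughout the paper; this is automatic from the construction.
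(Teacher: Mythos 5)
Your proof is correct and uses the same companion-matrix-and-direct-sum construction as the paper's one-line proof. You have also implicitly corrected a small slip in the statement: since the spectral radius of an element of $\GL(n,\Z)$ is necessarily an algebraic unit (any eigenvalue $\mu$ has $\mu\bar\mu$ dividing $\pm\det A$), the first clause should read ``weak Perron unit,'' exactly as you write --- a non-unit weak Perron number such as $2$ cannot occur.
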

\begin{proof}
Sufficiency is easy. Given a weak Perron number $a$, the companion matrix for the
minimal polynomial for $a$ has expansion constant $a$.  Given a conjugate pinching pair $(a,b)$, take the sum of the companion matrix for a minimal polynomial for $a$ with the inverse of a companion matrix for the minimal polynomial for $b$.

Necessity is pretty obvious:  $a$ must
be the maximum absolute value, and $b$ the minimum, among all
roots of $\chi_A$.
\end{proof}
\begin{defn}
An invertible matrix $A$ is \emph{bipositive} with respect to
a basis $B$ if $B$ admits a partition into two parts $P$ and $Q$ such that
$A$ maps the orthant spanned by $P \cup Q$ to itself, and $A^{-1}$
maps the orthant spanned by $P \cup -Q$ to itself.
\end{defn}

If either of the two positive matrices associated with
a bipositive matrix is mixing,  the other is as well, so in this case
we also say
the bipositive matrix is mixing.
Similarly, we will call a bipositive matrix \emph{mixing} if
all sufficiently high powers have all non-zero entries. More
geometrically, given a positive matrix, we can make a graph whose
vertices are the basis elements and whose edges correspond to nonzero
entries. The matrix is primitive if there are edge-paths of every
sufficiently long length from each vertex to each others.  The set
of lengths of edge-paths from one vertex to another is a semigroup,
and it is easy to see that its  complement is finite
if and only if the elements have a
common divisor larger than one.
It follows that an irreducible positive or bipositive
matrix $A$ is mixing
if and only if every positive power of $A$ is ergodic.

\begin{theorem} \label{bipositive matrices}
A pair $(a,b)$ is the pair of positive eigenvalues for an invertible bipositive integral matrix of some dimension
if and only if it is a conjugate pinching
pair, or a weak conjugate pinching pair such that
all conjugates of $a$ and $b^{-1}$ of modulus $a$ or $b^{-1}$
are $a$ or $b^{-1}$ times roots of unity.
\end{theorem}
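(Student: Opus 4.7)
The plan is to prove both directions separately, with sufficiency split into the strictly pinching case and a reduction from the weak-with-root-of-unity case.

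\emph{Necessity.} Let $A\in\GL(n,\Z)$ be bipositive with basis $B=P\cup N$, and let $a$ and $b$ be the Perron eigenvalues of $A$ and $A^{-1}$; both are algebraic units. Perron--Frobenius applied to $A$ (non-negative in $B$) gives $|\mu|\le a$ for every eigenvalue $\mu$ of $A$, and applied to $A^{-1}$ (non-negative in $P\cup -N$) gives $|\mu^{-1}|\le b$, i.e.\ $|\mu|\ge b^{-1}$. Since conjugates of $a$ and of $b^{-1}$ appear among the eigenvalues of $A$, they lie in the closed annulus $b^{-1}\le|z|\le a$. The root-of-unity condition on peripheral conjugates follows from the classical description of the peripheral spectrum of a non-negative matrix (Perron value times $k$-th roots of unity on each irreducible block of period $k$).

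\emph{Sufficiency, strict pinching.} Put $V=V_a\oplus V_{b^{-1}}$, $\Lambda=O_a\oplus O_{b^{-1}}$, and $\tilde A=(a\cdot)\oplus(b^{-1}\cdot)$; under strict pinching, $a$ and $b^{-1}$ are the unique eigenvalues of $\tilde A$ of modulus $a$ and $b^{-1}$. Let $\alpha,\beta\in V^*$ be the corresponding left eigen-functionals. Following the Lind-style construction of section~\ref{sec:ProofMain1a}, build a rational polyhedral cone $C^+\subset V$ containing $u_a$ in its interior with $\tilde A\cdot C^+\subset\operatorname{interior}(C^+)$, with generators $G\subset\Lambda$. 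Choose $G$ generically so that $\beta(g)\neq 0$ for all $g\in G$, and bipartition $G=P\sqcup N$ by the sign of $\beta$. Set $C^-:=\mathrm{cone}(P\cup -N)$. The central step is to verify that $C^-$ is $\tilde A^{-1}$-invariant and that $u_{b^{-1}}$ lies in its interior; granting this, the lifts of $\tilde A$ and $\tilde A^{-1}$ to $\Z^G$ via the non-negative integer expansions in the semigroups generated by $G$ (resp.\ $P\cup -N$) produce a bipositive $A\in\GL(|G|,\Z)$ with Perron eigenvalues $a$ for $A$ and $b$ for $A^{-1}$.

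\emph{Weak case with root-of-unity condition.} Let $k$ be the LCM of the orders of all roots of unity arising on the peripheral spectrum. For any conjugate $a'=a\zeta$ of $a$ with $|a'|=a$ we have $(a')^k=a^k$, and likewise for $b^{-1}$. Hence $a^k$ (resp.\ $b^{-k}$) is the unique conjugate of $a^k$ of modulus $a^k$ (resp.\ $b^{-k}$), so $(a^k,b^k)$ falls under the strict construction above and yields a bipositive $A_k\in\GL(m,\Z)$ with Perron eigenvalues $a^k,b^k$. Form $A\in\GL(km,\Z)$ as a $k$-th root by cyclic permutation: on $\bigoplus_{i=0}^{k-1}\Z^m$, let $A$ send $(v_0,\ldots,v_{k-1})\mapsto(A_kv_{k-1},v_0,\ldots,v_{k-2})$, so that $A^k=A_k^{\oplus k}$. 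The eigenvalues of $A$ are the $k$-th roots of those of $A_k$, and the real-positive Perron eigenvalues of $A$ and $A^{-1}$ are $a$ and $b$. Bipositivity of $A_k$ in basis $P_k\cup N_k$ lifts block-by-block: the shifted identity blocks of $A$ are invariant under any sign flip and the single $A_k$-block flips correctly.

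The main obstacle is the cone-compatibility step in the strict case: engineering the generators $G$ of $C^+$ to populate both $P$ and $N$ sufficiently that the reflected cone $C^-$ captures a neighborhood of $u_{b^{-1}}$ and is $\tilde A^{-1}$-invariant. I expect this to follow from a simultaneous density argument combining the Lind-style construction of section~\ref{sec:ProofMain1a} applied to both $\tilde A$ (to produce $C^+$ around $u_a$) and $\tilde A^{-1}$ (to enforce that the reflected generators sit inside an $\tilde A^{-1}$-invariant cone around $u_{b^{-1}}$), exploiting that both half-spaces $\{\alpha>0\}$ and $\{\beta>0\}$ are preserved by both $\tilde A$ and $\tilde A^{-1}$.
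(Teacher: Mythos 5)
Your necessity argument and your reduction of the weak case to the strict case are both correct and essentially match what the paper does (the paper's converse uses an isometrically-permuted-polyhedron argument rather than citing the peripheral-spectrum classification, but these come to the same thing, and the cyclic $k$-th-root block construction is identical). The problem is in the strict case, which is the heart of the theorem, and it is exactly where you say ``The main obstacle is the cone-compatibility step.''

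That obstacle is genuine and your plan does not obviously surmount it, for two reasons. First, a pointed rational cone $C^+$ with $u_a$ in its interior and $\tilde A C^+\subset\interior C^+$ is narrow around $u_a$; if you pick generators $G$ of $C^+$ and flip the sign of those with $\beta<0$, the resulting set $P\cup -N$ still has its ``off-$ST$-plane'' coordinates (the directions of the intermediate eigenvalues, which grow relative to $\alpha$ under $\tilde A^{-1}$) comparable to $|\alpha|$ rather than to $|\beta|$, so $\mathrm{cone}(P\cup -N)$ is not a narrow cone around $u_{b^{-1}}$ and there is no reason for it to be $\tilde A^{-1}$-invariant. Second, even granting an invariant cone pair, the Lind-style lift of $\tilde A$ to $\Z^G$ is a non-negative integer matrix but is typically \emph{not} invertible (its characteristic polynomial has extra factors beyond the minimal polynomials of $a$ and $b^{-1}$, often including a power of $x$), so you would not get an element of $\GL(|G|,\Z)$ at all, let alone one whose inverse is sign-reflected non-negative. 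You never address invertibility.

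The paper sidesteps both difficulties with a different construction. It does not look for a single invariant cone and bipartition its generators; it chooses two \emph{independent} generating sets $P_0$ and $Q_0$, with $p(P_0)$ filling a sector in the first quadrant of the $ST$-plane between the lines $L$ and $A(L)$, and $p(Q_0)$ filling the reflected sector (these sectors do \emph{not} have $u_a$ or $u_{b^{-1}}$ in their interior). The bipositive matrix then lives on a much larger free abelian group whose generators are orbit segments $A^{-k}(g),\dots,A^{j(g)-1}(g)$, and the map simply shifts along each orbit segment until the terminal step, where Lemma~\ref{semigroup most of cone} is used to write $A^{j(g)}(g)=A^{-k}(p(g))+(\text{element of the opposite semigroup})$. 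The resulting matrix is a permutation composed with an upper-triangular matrix, hence manifestly in $\GL(\cdot,\Z)$, and bipositivity of the inverse falls out by subtracting the semigroup element and flipping the $Q$-signs. You would need to replace the ``simultaneous density argument'' you hope for with some version of this orbit-segment mechanism (or a different argument entirely) to close the gap and to guarantee invertibility.
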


\begin{remark}
The clause about roots of unity addresses the issue that although all Galois conjugates of a weak Perron number $a$ that have \emph{maximum} modulus are at angles that are roots of unity, the Galois conjugates of \emph{minimum} modulus need not be.

For instance, if $a$ is the plastic number $1.32472\dots$ which is a root of $x^3-x-1$, its other two conjugates $-0.662359\dots \pm i 0.56228 \dots $ have modulus $1/\sqrt a$, so $(a, \sqrt a)$ is a weak conjugate pinching pair. However, the two complex Galois conjugates of $a$ are at angles that are irrational multiples of $\pi$.  Therefore $(a, \sqrt{a})$ is not the pair of expansion constants for any bipositive matrix.
\end{remark}

\begin{proof}
We will first establish the theorem in the case $(a,b)$ is a [strict]
conjugate pinching pair. 

First, let $A \in \GL(n,\Z)$
have eigenvectors $S$ and $T$ of
eigenvalues $a$ and $b^{-1}$. By restricting to the smallest rationally defined subspace of $\R^n$ containing $S$ and $T$, we may assume that all other
characteristic roots of $A$ are in the interior of
the annulus $b^{-1}<\abs x < a$. Let $H$ be the real subspace
spanned by $S$ and $T$, and $p: \R^n \rightarrow H$ be the projection
that commutes with $A$.

Let $L$ be the line through the origin
of slope $ a*b$ in the $ST$-plane.  The linear transformation $A$ multiplies
slopes by $1/a*b$, so $A(L)$ has slope $1$. Let $L'$ be the reflection
of L in the $S$-axis; thus $A(L')$ is the reflection of $A(L)$.

Choose a set $P_0$  of lattice points such that $p(P_0)$ lies
in the interior of the first quadrant of the $ST$-plane, $P_0$
generates the lattice $\Z^n$ as a group, and the convex cone
generated by $P_0$ contains the angle between $L$ and $A(L)$, except $0$,
in its interior.  Similarly,
choose $Q_0$ to be
a set of lattice vectors that generate $\Z^n$ as a group,  that are mapped
by $p$
to the interior of the fourth quadrant, generates $\Z^n$ as
a group, and contains the angle between $L'$ and $A(L')$.

Let $\sigma_P$ be the semigroup generated by $ P_0 $
and let $\sigma_Q$ be the semigroup generated by $Q_0 $.

We claim that for sufficiently large $k > 0$ and for any two elements
$p_1, p_2 \in P_0$ there is a $j > 0$ such that 
$$ A^j(p_2) - A^{-k} (p_1) \in \sigma_Q, $$
and similarly, for any $q_1, q_2 \in Q_0$ there is a $j > 0$ such that
$$ A^j(q_2) - A^{-k}(q_1) \in \sigma_P.$$
To see this, we will make use of a basic fact about the
semigroups of lattice
elements:

\begin{lemma} \label{semigroup most of cone}
Let $U$ be a finite set of elements of the
lattice $\Z^n \subset \R^n$,
and let $C_U$ be the convex cone generated by $U$.
There is some constant $R$ such that any element $g$ of
the group $G_U$ generated by $U$
whose radius $R$ ball $B_R(g)$ is contained in $C_U$ is also
contained in the 
semigroup $S_U$ generated by $U$.
\end{lemma}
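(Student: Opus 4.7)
My plan is to show that the ``gap set'' $C_U \cap G_U \setminus S_U$ is trapped in a bounded neighborhood of $\partial C_U$. The mechanism is a classical fundamental-zonotope argument combined with the observation that shifting $g$ slightly inward along a fixed interior direction puts it in a form that decomposes cleanly. First I would reduce to the case where $U$ spans $\R^n$ and $G_U$ equals the full lattice $\Z^n$, by restricting to $\mathrm{span}_\R(U)$ and applying a lattice isomorphism; in the degenerate case where $U$ fails to span, the hypothesis $B_R(g)\subset C_U$ is vacuous in the ambient metric and should be read inside this subspace.

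Next I would introduce the interior vector $v := \sum_{u \in U} u \in C_U$ and the half-open zonotope $F := \{\sum_{u} t_u\, u : 0 \le t_u < 1\}$. Because $F$ is bounded, $F \cap \Z^n$ is a finite list $\{f_1,\dots,f_k\}$; since $U$ generates $\Z^n$ as a group, each $f_j$ admits some \emph{integer} expression $f_j = \sum_u m_{j,u}\, u$, where some $m_{j,u}$ may be negative. Pick a positive integer $M$ strictly larger than $\max_{j,u}|m_{j,u}|$, and set $R := M\,\|v\|$.

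The final step is a cancellation calculation. If $g \in G_U$ and $B_R(g) \subset C_U$, then $g - Mv \in C_U$, so it admits a non-negative real expression $\sum_u \mu_u\, u$; splitting each $\mu_u$ into its integer and fractional parts produces an element of $S_U$ plus a fractional remainder which must lie in $F \cap \Z^n$, hence equals some $f_j$. Substituting its integer expression in terms of $U$ and adding $Mv$ back exhibits $g$ as $\sum_u (M + \lfloor\mu_u\rfloor + m_{j,u})\,u$, with non-negative integer coefficients by the choice of $M$, so $g \in S_U$. The main obstacle I anticipate is not any deep algebra but the bookkeeping around the initial reduction: verifying that $R$ can be chosen independent of $g$, that the ball condition translates correctly when $U$ spans a proper subspace, and that ``$U$ generates $G_U$ as a group'' really lets us pick integer (not just rational) coefficients $m_{j,u}$ for the $f_j$. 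Once this is in place the argument is pure Gordan-lemma combinatorics.
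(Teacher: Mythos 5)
Your proof is correct, and at its core it uses the same blueprint as the paper: express an element of the cone as a non-negative real combination of $U$, round to integers to land in $S_U$ up to a bounded remainder, and invoke the finiteness of nearby lattice points (your half-open zonotope $F$ versus the paper's ball $B_{R_1}(0)$ is a cosmetic difference) to control that remainder. The substantive thing you add is the explicit shift by $Mv$ with $v=\sum_{u\in U}u$ \emph{before} rounding: after you re-express the fractional remainder $r=f_j$ via its integer coefficients $m_{j,u}$ and add $Mv$ back, the uniform margin $M$ absorbs every negative $m_{j,u}$, giving coefficients $M+\lfloor\mu_u\rfloor+m_{j,u}\ge 1$. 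That step is what actually closes the argument. The paper's proof, as written, sets $R=R_1+R_2$ and asserts that $g=s+f$ with $s\in S_U$, $f\in F$, then stops; it never says how one passes from this decomposition to $g\in S_U$ when $f$ has negative $U$-coordinates, nor does it identify the interior direction against which $R_2$ is to be ``spent.'' Your $-Mv$ then $+Mv$ maneuver supplies exactly the mechanism that the paper's terser sketch leaves implicit, and your bookkeeping (the reduction to $U$ spanning, with the degenerate case vacuous in the ambient metric; the choice $R=M\|v\|$; the integer expressibility of elements of $F\cap G_U$ because $U$ generates $G_U$) is all sound. One micro-caveat: if $B_R(g)$ is meant as an open ball, take $R$ slightly larger than $M\|v\|$ so that $g-Mv$ genuinely lies in $C_U$; and note your final coefficients are all $\ge 1$, so no semigroup-versus-monoid ambiguity about $0\in S_U$ arises.
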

\begin{proof}
The convex cone $C_U$ is the set of all non-negative real linear combinations
of elements of $U$. If we express an element of $G_U \cap C_U$ as a non-negative real linear combination of elements of $U$ and round the coefficients to the nearest integer,
we see that $g$ is within a bounded distance $R_1$ of some element of $S_U$.

Let $F$ be the set $G_U \cap B_{R_1}(0)$. Express each element of $f \in F$ as an integer linear combination $\sum_{u \in U} k_{u,f} u$. Let $R_2$ be the maximum,
among  these linear combinations, of the norm of the set of negative coefficients, and set $R = R_1 + R_2$.

Now for any element $g \in G_U$ such that $B_R(g) \subset C_U$, there are elements $s \in  S_U$ and $f \in F$ such that $g = s + f$.

\end{proof}

Now for $k$ large, the image in projective space
of $A^{-k}(p_1)$ is close to the image of the eigenspace $T$.
The images $A^j(p_2)$ march in the direction of the $S$ axis, projectively
converging to the $S$ eigenspace, with the slope in the $ST$-plane
of the line from $A^{-k}(p_1)$ to $A^j(p_2)$ decreasing by a factor
of $a*b$ at each iterate.  If $k$ is suitably large, at least one of these iterates is the center of a large ball captured in the interior of the cone $A^{-k}(p_1)+C_Q$ and so, by the lemma, in the semigroup $\sigma_Q$.

\medskip
Now we can describe an irreducible bipositive matrix. Take the free 
abelian group $FA$
generated by
$$\bigcup_{g \in {P_0\cup Q_0 }}\left \{ A^{-k}(g), \dots, A^{j(g) - 1}(g) \right\}.
$$
We will
choose a bipositive map $\tilde A$
of $FA$ to itself that commutes with evaluation in $\Z^n$.
The
generators $A^h(g)$ pass off from one to the next
 until $ A^{j(g) - 1}(g)$. Choose a cyclic
permutation of $p$ of $P_0$ and a cyclic permutation $q$ of $Q_0$.
For each $g \in P_0$ choose an expression $A^{j(g)}(g) = A^{-k}(p(g)) + sq$
where $sq$ is an element of $\sigma_Q$, and similarly
for $g \in Q_0$ express $A^{j(g)}(g) = A^{-k} (q(g)) + sp$, 
with $sp \in \sigma_P$.  Use this to give the final links in the chain,
to define $\tilde A$.

The matrix for the linear transformation can be expressed as an upper triangular
matrix followed by a permutation, hence it is invertible. In block form,
the $P$ generators are each expressed as another $P$ generator plus an
element of the $\sigma_Q$ semigroup, and \emph{vice versa}. The inverse
has the same form, but with semigroup elements subtracted; reversing the 
sign of the $Q$ generators turns the inverse into a positive matrix.

The matrix for $FA$ is irreducible because of the cyclic permutations:
the images of each generator eventually involves each other generator,
and (except in the trivial case $a=b=1$) the powers of the matrix are
eventually strictly positive.  The positive eigenvalue for $FA$ is $a$,
since projection to the $S$ axis gives a linear function, positive on
the positive orthant for $FA$, that is a dual eigenvector of eigenvalue $a$.
Similarly, the positive eigenvalue for $FA^{-1}$ is $b$.

\bigskip

If $(a,b)$ is a weak conjugate pinching pair such that all conjugates of $a$ or $b^{-1}$ on the circle of maximal or minimal modulus have arguments that are
rational multiples of $2 \pi$,
let $h$ be a common multiple of the denominators. Then $(a^h, b^h)$ is a strict
conjugate pinching pair; let $A_0$ be a matrix realizing this pair of pinching
constants. Take the direct sum of $h$ copies of the underlying vector space,
permute them cyclically, with return map $A_0$. This is a bipositive
matrix realizing $(a,b)$.

\bigskip

For the converse: consider any bipositive matrix $A$ with positive
eigenvalue $a$ and positive eigenvalue $b$ for $A^{-1}$ where one or both are not the unique characteristic roots of maximum modulus {\em say there are $k$ roots of modulus $a$}. Then there is a $k$-dimensional subspace with a metric where
$A$ acts as a similarity, expanding by a factor of $a$. Hence, the projective image of this subspace is mapped isometrically. Its intersection with the image of the positive orthant is a polyhedron mapped isometrically to itself; hence, its vertices are permuted. It follows that all characteristic roots of modulus $a$ have the form of $a$ times an $m$th root of unity.
\end{proof}

A simple example of the construction for a weak conjugate pinching pair is the Fibonacci transformation
$x \mapsto y$ and $y \mapsto x+y$, with eigenvalues
 $\phi=(1+\sqrt 5)/2$, the golden ratio and $-1/\phi$. The square of this transformation is $x \mapsto x+y$ and $y \mapsto x+2 y$, which is bipositive: its inverse maps the second quadrant to itself. This translates into a
 bipositive map for $(\phi, 1/\phi)$ in dimension 4, that expresses a bipositive
 linear recurrence for four successive terms of the Fibonacci sequence,
 $$x_1 \mapsto x_2, \; x_2 \mapsto x_3, \; x_3 \mapsto x_2 + x_3, \; x_3 \mapsto x_1 + 2 x_2. $$
 As another example,  $a = (1+\sqrt 2)^{1/3}$ and $b = (\sqrt 2 - 1)^{1/3}$) can be realized by a bipositive transformation in dimension 12.

\begin{question} (suggested by Martin Kassabov)
Suppose $A \in \GL(n,\Z)$ has dominant eigenvalue $a$ and $A^{-1}$ has dominant eigenvalue $b$ where $(a,b)$ is a [strict] conjugate pinching pair. Is there a basis for which some
power of $A$ is bipositive?
\end{question}

\begin{remark}
Although elementary matrices generate $\mathrm{SL}(n, \Z)$, and together with permutations generate $\GL(n,\Z)$, the semigroup is a very different matter:
for $n \ge 3$, the elementary positive semigroup 
is not even finitely generated.

To see the gap between  the elementary positive semigroup and the full
positive semigroup of $\GL(n,\Z)$, let's focus on the case $n=3$
(The  embedding in $\GL ({3+n} ,\Z)$ that fixes all but the first 3 basis elements gives examples, albeit atypical, for arbitrary dimension $\ge 3$).  

Let's look at the action of these semigroups
on the basis triangle $B$
in ${\mathbb R}{\mathbb P}^2$.
The image of the triangle by a word in the generators gives a sequence of subtriangles of this triangle, where at each step you bisect one of the sides and throw one half away.   In particular, each possible proper image
is contained in one of six half-triangles of $B$. 

Now consider in general
positive element of $\GL (3, \Z)$. It maps the tetrahedron spanned by $0$ together with the basis elements  to a `clean' lattice tetrahedron, that intersects lattice points only at its vertices. This property (clean) characterizes the possible images.  Furthermore, any clean triangle in the positive
orthant with one vertex at the origin can be extended (in many ways)
to a clean tetrahedron:
just add any vertex in one of the two lattice planes neighboring the plane containing the triangle

But there are  many clean triangles; in fact, the set of  lattice points in the positive orthant that are primitive (\emph{i.e.} the 1-simplex from the origin to the point is clean) have density 
$(1- 2^{-3}) * (1-3^{-3}) * (1-5^{-3}) \dots = 1/\zeta(3) \approx .831907\dots$,
and given a primitive lattice point $p$, the density of lattice points $q$
such that the triangle $\Delta (0,p,q)$ is clean is $1/\zeta(2) = 6/\pi^2 \approx .607927 \dots$.
It follows that every 
line segment contained in $B$ can be approximated in the
Hausdorff topology by an image of $B$ under the positive semigroup of 
$\GL (3, \Z)$.
Many such line segments cross all 3 altitudes of $B$, so a
positive element of $\GL (3 ,\Z)$ that maps $B$ to a nearby triangle is not in the positive elementary semigroup.

Nonetheless, images of $B$ under the positive semigroup of $\GL (3 ,\Z)$ are quite restricted. For instance, it's easy to see that the centroid of $B$, corresponding
to the line $x = y = z$, cannot be in the interior of any image of $B$. For any
finite collection of triangles not containing the centroid in their interior, most lines through the centroid are not contained in any one of them. Therefore, no finite set of positive $\GL (3, \Z)$ images of $B$ cover all possible images.
\end{remark}

Note that the proof for theorem \ref{bipositive matrices} actually
gave something stronger. If we have a basis $B$ that is partitioned into two
parts $P$ and $Q$, then any elementary transform that replaces an element of
$P$ by its sum with an element of $Q$, or an element of $Q$ by its sum with an element of $P$ is bipositive. The \emph{elementary bipositive semigroup}
[with respect to $(P,Q)$]
is the semigroup generated by these cross-type elementary transformations, together with permutations that preserve $P$ and $Q$.

\begin{theorem}\label{elementary bipositive expansions}
A pair of real algebraic units $(a,b)$ is the pair of expansion constants for an
elementary bipositive matrix and its inverse if and only if it is a conjugate pinching pair such that all Galois conjugates of $a$ or $b^{-1}$ of maximal or 
minimal modulus have arguments that are roots of unity.
\end{theorem}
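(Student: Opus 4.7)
The plan is to observe that the proof of Theorem \ref{bipositive matrices} already produces an elementary bipositive matrix, and that necessity is inherited because the elementary bipositive semigroup sits inside the bipositive matrices.

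For necessity, I would first check that every generator of the elementary bipositive semigroup is itself bipositive. A cross-type elementary transformation $p \mapsto p+q$ with $p \in P$ and $q \in Q$ clearly sends the positive orthant to itself, and its inverse $p \mapsto p-q$ preserves the orthant on $P \cup -Q$ (since $-q$ is non-negative in the reoriented basis); the case $q \mapsto q+p$ is symmetric, and a $(P,Q)$-preserving permutation is trivially bipositive. Because the composition of two bipositive transformations with the same partition is bipositive (the forward map preserves the positive orthant in two steps, and the inverse, with factors reversed, preserves the $(P,-Q)$-orthant), every element of the elementary bipositive semigroup is bipositive. Theorem \ref{bipositive matrices} then forces the pair of expansion constants to meet the stated conditions.

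For sufficiency in the strict case, I would revisit the matrix $\tilde A$ built in the proof of Theorem \ref{bipositive matrices}. Its basis is $\bigcup_{g \in P_0 \cup Q_0}\{A^{-k}(g),\ldots,A^{j(g)-1}(g)\}$, with $P$ consisting of the chains arising from $g \in P_0$ and $Q$ from $g \in Q_0$. On each chain, $\tilde A$ shifts every generator except the last to its successor (a $(P,Q)$-preserving permutation within the chain). For $g \in P_0$, the closing step sends $A^{j(g)-1}(g) \mapsto A^{-k}(p(g)) + sq$ with $sq \in \sigma_Q$ a non-negative integer combination of $Q$-basis elements, and symmetrically for $g \in Q_0$. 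Each closing step factors as a sequence of cross-type elementary additions $x \mapsto x + q_i$ (or $x \mapsto x + p_i$), each realizing one summand of $sq$ (or $sp$), followed by a $(P,Q)$-preserving permutation that carries the adjusted chain-end to the start of the next chain. Every factor belongs to the elementary bipositive semigroup, so $\tilde A$ does as well.

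For the weak case, where some extremal conjugate of $a$ or $b^{-1}$ has root-of-unity argument, I would mimic the reduction used in Theorem \ref{bipositive matrices}: choose $h$ a common multiple of the orders of those roots of unity so that $(a^h,b^h)$ is a strict conjugate pinching pair, construct an elementary bipositive $M$ realizing $(a^h,b^h)$ by the previous paragraph, take the direct sum of $h$ copies of its underlying lattice with the induced direct-sum partition, and cyclically permute the summands (a $(P,Q)$-preserving permutation) using $M$ as the return map on one of them. The final matrix is a product of elementary bipositive generators, now with positive eigenvalues $(a,b)$. The main obstacle — really the only place requiring care — is the bookkeeping in the chain-closing factorization: one must commit to a specific sequence of cross-type additions and a specific permutation that together reproduce the prescribed integer coefficients of $sq$ and $sp$, but because the cross-type transformations generate all opposite-side additions and partition-preserving permutations are freely available, this is routine once the data of the original construction is fixed.
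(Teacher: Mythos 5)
Your proposal is correct and takes essentially the same route the paper intends: the paper's one-sentence proof simply points to the fact that the construction in Theorem~\ref{bipositive matrices} already produces an elementary bipositive matrix (the chain-shifting permutation composed with cross-type additions realizing the $\sigma_Q$ and $\sigma_P$ summands), and that necessity is inherited because elementary bipositive matrices are bipositive. You have fleshed out both directions carefully --- in particular, checking that cross-type generators and $(P,Q)$-preserving permutations are individually bipositive, that bipositivity with a fixed partition is closed under composition, and that the weak case reduces via the $h$-fold cyclic direct sum --- which fills in exactly the details the paper leaves implicit.
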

\begin{proof}From the proof of theorem \ref{bipositive matrices},
the condition on arguments is an equivalent form of the hypothesis
in the case that $(a,b)$ does not strictly pinch.
\end{proof}

\section{Tracks, Doubletracks, Zipping and a sketch of the Proof of Theorem \ref{Main3}}\label{sec:Doubletracks}

Continuous maps are often inconvenient for representing homotopy equivalences between graphs, because a self homotopy equivalence  cannot  be made into a self-homeomorphism of any graph in the homotopy class unless it has finite order up to homotopy.

Continuous maps of graphs can be inconvenient as geometric 
representatives of group automorphisms of the free group, since they are usually not invertible.  As we have seen, it is not easy to see at a glance whether a given map of graphs is a homotopy equivalence. There are algorithms to check, but they can be tedious.

As an alternative,  we can represent self-homotopy equivalences
by continuous 1-parameter families of graphs. These have the advantage of being
reversible. If we restrict to
graphs that have no vertices of valence 1, these can be 
locally described by moving attachment points of edges along paths
in the complement of the edge.

A \emph{zipping} of a train track $\tau$ to a train track $\sigma$  is a 1-parameter family of structures
$(\Gamma_t, \tau_t) | \tau \in [0,1]$ that may be thought of
as squeezing together legal paths. It's elementary to see
that for any train track map $f$, there
is a zipping  that yields the homotopy class of $f$: just progressively and 
locally zip together the identifications that will be made by the map.

A zipping can be translated into a sequence of
reversible steps, consisting of a motion of an attachment point of one edge along a legal path on its complement, 
starting in a direction in its linkgroup. When $(\Gamma, \tau)$
is zipped to $(\Gamma', \tau')$, every bi-infinite $\tau$-legal path
becomes a bi-infinite $\tau'$-legal path.
The inverse of a zipping is 
an \emph{unzipping} or \emph{splitting}.

A \emph{doubletrack} structure for $\Gamma$ is a pair $(\sigma, \tau)$
of train track structures on $\Gamma$. A graph $\Gamma$ equipped with
a pair of train track structures is a \emph{doubletrack}.

A \emph{bizipping} between
doubletracks,
is a 1-parameter family of doubletracks that is a zipping of the first train track structure and an unzipping of the second.  This yields a train track map of one structure whose inverse is a train track map for the other structure.

\begin{remark}
It seems likely that invariant
foliations could provide a good alternative to train tracks.
Bestvina and Handel introduced a concept of train tracks relative to
an invariant filtration of a graph, and showed that relative train track
maps exist for every outer automorphism of a free group (not just in the
irreducible case). Instead, one could look at  foliations of finite
depth on a manifolds of sufficiently high dimension (as a function of the
rank of the free group), with singularities having links based on
polyhedra,  and satisfying the condition (used to great effect by
Novikov) that there are no null-homotopic
closed transversals.  Such a foliation picks out a class of bi-infinite
words in the free group. Bestvina-Handel's theorem on existence of
relative 
train tracks would appear to translate to the existence of a homeomorphism 
of some open manifold homotopy-equivalent to a bouquet of circles
that preserves such foliation.  Pairs of foliations could
substitute for doubletracks.

We will not take the detour of trying to develop this point of
view here.
\end{remark}

We now sketch the proof of theorem \ref{Main3}.
\begin{proof}
We will now analyze the [main] case when there are strict inequalities:
let $(a,b)$ be a strictly pinching pair of algebraic units.

From theorems \ref{bipositive matrices}, let A be a bipositive matrix with 
positive eigenvectors $S$ for $A$ and $T$ for $A^{-1}$ having
eigenvalues $a$ and $b$. Let $p$ be the invariant
projection of the vector space for $A$
to the plane spanned by $S$ and $T$. The basis is partitioned into two
sets, $P$ and $Q$, with $p(P)$ in the first quadrant and $p(Q)$ in the
fourth quadrant. The negations of these two sets give vectors that are mapped
into the other two quadrants.

We claim that, for a suitable choice of $P$ and $Q$, there is a permutation
$\alpha$ of the basis such that for any $g \in P \cup Q$
the difference $A( \alpha g) - g$ is a non-negative linear combination
of  $\pm$ basis vectors that map to the quadrant neighboring
the quadrant of $p(g)$ across the $S$ axis.  (To help with visualization:
for most basis elements, $\alpha$
will be chosen so that $A(g) = \alpha^{-1}(g)$.)

Suppose this claim is true.
Let $\Gamma$ be a graph with a single vertex whose edges correspond
to $P \cup Q$. Let $E$ be the homomorphism $\pi_1(\Gamma) \to \R^n$
that maps the loop of an edge labeled by a vector to that vector, and
let $\tilde \Gamma$ be the corresponding covering space. ($\Gamma$ can
be visualized as an embedded graph in the torus $\R^n / \Z^n$,
and $\tilde \Gamma$ is lift to a graph in $\R^n$).  
Define a doubletrack structure $(\sigma, \tau)$ on $\Gamma$
where a $\sigma$-legal path lifted to $\tilde \Gamma$ has monotone projection 
to the $S$-axis
and a $\tau$-legal path lifts to have monotone projection to the $T$-axis.
The linear transformation $A$ maps $\Gamma$ to a new graph $A(\Gamma)$ 
mapped to $\R^n$. Using the claim above, we can construct
a bizipping that slides the image edges back to the originals.

\end{proof}

\bibliography{Entropy.bib}{}
\bibliographystyle{abbrv}

%
%
%
%
%
%
%

\end{document}